\documentclass[11pt, reqno]{amsart}
\usepackage{amsmath}
\usepackage{amsthm}
\usepackage{amssymb}
\usepackage{amscd}
\usepackage{amsbsy}
\usepackage{epsfig}
\usepackage{graphicx}
\usepackage{psfrag}
\usepackage{bbm}
\usepackage[pagebackref,bookmarks=false]{hyperref}
\usepackage[UKenglish]{babel}
\usepackage[UKenglish]{isodate}
\usepackage{cleveref}
\usepackage{stackrel}

\theoremstyle{plain}
\newtheorem{theorem}{Theorem}[section]

\newtheorem{remark}[theorem]{Remark}
\newtheorem{proposition}[theorem]{Proposition}
\newtheorem{lemma}[theorem]{Lemma}
\newtheorem{corollary}[theorem]{Corollary}

\theoremstyle{definition}
\newtheorem{definition}[theorem]{Definition}

\newtheorem*{claim*}{Claim}

\newtheorem{maintheorem}{Theorem}

\newtheorem*{lemma*}{Lemma}

\theoremstyle{definition}

\def\R{\ensuremath{\mathbb R}}
\def\N{\ensuremath{\mathbb N}}

\def\e{{\ensuremath{\rm e}}}

\def\M{\ensuremath{\mathcal M}}

\def\P{\ensuremath{\mathcal P}}

\def\E{\mathbb E}

\def\dist{\ensuremath{\text{dist}}}

\def\eps{\varepsilon}

\def\1{ \mathbbm{1}}

\numberwithin{equation}{section}

\begin{document}

\author[S.~Baker]{Simon Baker}
\address{Mathematical Sciences\\
Loughborough University\\
Loughborough\\
Leicestershire\\
LE11 3TU\\
UK} 
\email{simonbaker412@gmail.com}
\urladdr{https://simonbakermaths.wordpress.com/}    

\author[M.~Todd]{Mike Todd}
\address{Mathematical Institute\\
University of St Andrews\\
North Haugh\\
St Andrews\\
KY16 9SS\\
Scotland} 
\email{m.todd@st-andrews.ac.uk}
\urladdr{https://mtoddm.github.io}

\title{Pair correlation statistics for dynamical systems}
\date{\today}

\subjclass{37B20, 37A05, 37E05   (Primary), 11K36   (Secondary)}

\keywords{Pair correlation statistics, Gibbs-Markov maps, Recurrence}

\maketitle

\begin{abstract}
	We study the pair correlation statistics of orbits generated by maps on the interval. We show that under suitable mixing and multifractal assumptions, the pair correlation statistics of an orbit will almost surely exhibit the same asymptotic behaviour as a suitable sequence of i.i.d.\ random variables. We will also show that, under suitable hypotheses, the pair correlation statistics defined by two orbits will almost surely exhibit the same behaviour as two suitable sequences of i.i.d.\ random variables. Specific dynamical systems to which our results apply to include Gibbs-Markov maps and the Gauss map.  We also give an example of a slowly mixing system for which the pair correlation statistics of an orbit almost surely behave distinctly to an i.i.d.\ sequence.
\end{abstract}

\section{Introduction}
Understanding the distribution of sequences of real numbers modulo one is a well-studied and important problem in mathematics \cite{Bugeaudbook,KuiNie}. One of the first topics one encounters in this field is the notion of uniform distribution: a sequence of real numbers $(x_{n})_{n}$ is \emph{uniformly distributed modulo one} if for any $0\leq a<b<1$ we have
$$\lim_{n\to\infty}\frac{\# \{1\leq i\leq n: x_{i} \mod 1\in [a,b]\}}{n}=b-a.$$
It is a consequence of the strong law of large numbers that if $(X_n)_n$ is a sequence of i.i.d.\ random variables distributed according to the uniform measure on $[0,1]$, then $(X_{n})_n$ will be uniformly distributed modulo one almost surely. This statement, and the concept of uniform distribution modulo one, can be generalised to other measures rather than just the uniform measure. It is a well studied problem to determine whether a sequence, often of some dynamical or number theoretic origins, is uniformly distributed with respect to some measure. The famous Birkhoff Ergodic Theorem \cite[Theorem 1.14]{Waltersbook} shows that for ergodic, measure-preserving transformations the orbit of a typical point will behave like a suitable sequence of i.i.d.\ random variables and be uniformly distributed with respect to the underlying measure. 

A more fine scale understanding of the distribution of a sequence is provided by the pair correlation statistics. Namely, given a sequence of real numbers $(x_n)_n$, $\beta\in (0,2)$ and $s>0$, we would like to understand the asymptotic behaviour of $$\#\left\{1\leq i\neq j\leq n:\|x_{i}-x_{j}\|\leq \frac{s}{n^{\beta}}\right\}$$ as $n\to\infty$. Here and throughout we let $\|x\|=\min\{|x-n|:n\in\mathbb{Z}\}$ for $x\in \mathbb{R}$.  It is known that if $\beta\in(0,2)$ and $(X_n)_n$ is a sequence of i.i.d.\ random variables distributed according to the uniform measure on $[0,1],$ then for any $s>0$ we have $$\lim_{n\to\infty}\frac{1}{n^{2-\beta}}\#\left\{0\le i\neq  j< n:\|X_i-X_j\|\le \frac s{n^{\beta}}\right\}=2s$$ almost surely. Just as in the case of uniform distribution, there is an analogue of this statement for more general measures than the uniform measure. Of particular interest is the case where $\beta=1$. In this case a sequence of real numbers $(x_n)_n$ is said to have Poissonian pair correlations if $$\lim_{n\to\infty}\frac{1}{n}\#\left\{0\le i\neq  j< n:\|x_i-x_j\|\le \frac s{n}\right\}=2s$$ for all $s>0$. It is known that if $(x_n)_n$ has Poissonian pair correlations then it is uniformly distributed modulo one \cite{AisLacPau18,GreLar17}. This result has been generalised to higher dimensions and sequences of points on manifolds in \cite{Mar20}. The opposite implication however does not hold, the sequence $(\alpha n)_{n}$ does not have Poissonian pair correlations for any $\alpha\in \mathbb{R},$ but is uniformly distributed modulo one whenever $\alpha\in \mathbb{R}\setminus \mathbb{Q}$. Thus the pair correlation statistics can be viewed as providing more detailed information about the distribution of a sequence than being uniformly distributed, and consequently they give a more sophisticated measure for how random a sequence is. 

With the above in mind, suppose we were given a sequence of some dynamical or number theoretic origins: it is natural to ask whether it behaves like a sequence of i.i.d.\ random variables at the level of pair correlation statistics. This is a particularly natural question to ask in the dynamical context given that the discussion above demonstrates that pair correlation statistics can provide more detailed information than the conclusion of the Birkhoff Ergodic Theorem. Moreover, it fits into an ongoing and active area of research in dynamics that is concerned with determining to what extent orbits behave like a sequence of i.i.d random variables, for example Central Limit Theorems/Stable Laws and Large Deviations results stemming from works such as \cite{You98, AarDen01}  and \cite{MelNic08, ReyYou08} respectively, and more recently functional limit theorems such as \cite{MelZwe15, FreFreTod25}.  We will also later mention results involving recurrence properties, which are more directly relevant to this paper, on hitting time statistics, shrinking target problems and extreme value theory. This being said, very little is known about the pair correlation statistics of orbits generated by dynamical systems, with the exception being two works by Rudnick and Zaharescu \cite{RudZah99, RudZah02}. It follows from \cite{RudZah99,RudZah02} that for $b\in \mathbb{N}_{\geq 2},$ if $T_{b}:\mathbb{R}/\mathbb{Z}\to \mathbb{R}/\mathbb{Z}$ is given by $x\to bx \mod 1$, then for Lebesgue almost every $x$ the sequence $(T_{b}^{n}x)_n$ has Poissonian pair correlations and the local spacing measures coincide with those of the Poisson model (see \cite[Section 1.1]{RudZah02} for further details). Part of the motivation for this paper is to begin to extend these results beyond such specialised dynamical systems. To the best of the authors' knowledge, this is the first paper to consider this problem for general families of dynamical systems on the interval. In contrast, understanding the pair correlation statistics of a sequence of some number theoretic origin is an active area of research. The case when $(x_n)_n=(\alpha q_n)_n$ where $(q_n)$ is a sequence of natural numbers and $\alpha\in \mathbb{R}$ has received significant attention \cite{ AisLarLew17,AisLarTec19,BloChoGalWal18,NaiPol07,RudZah99, RudZah02, Wal}. This is in part due to the connection between $(\alpha q_n)$ having Poissonian pair correlations for Lebesgue almost every $\alpha$ and the additive energy of the sequence $(q_n)_n$. There are also interesting connections between this problem and the Berry-Tabor conjecture from quantum mechanics \cite{RudSar}. Another well studied problem is to determine those values of $\theta>0$ for which $(n^{\theta})_n$ has Poissonian pair correlations. We refer the reader to \cite{LutSouTec25} for more on this problem, and for a proof that $(n^{\theta})$ has Poissonian pair correlations for all $\theta\in (0,14/41)$. We also refer the reader to \cite{ElkMcM04} and \cite{BazMarVin} where a deep connection between the distributional properties of $(n^{1/2})_n$ and homogeneous dynamics is exploited. In \cite{AisBak} powers of real numbers were studied, and it was shown that for Lebesgue almost every $\alpha>1$ the sequence $(\alpha^{n})_n$ has Poissonian pair correlations.    

In this paper we study the pair correlation statistics of orbits generated by maps $T:X\to X$ where $X\subset I=[0,1]$. More formally, given $s>0, \beta>0$, $n\in \N$ and $x\in X,$ we are interested in the almost sure asymptotic behaviour of
\begin{equation*}
	\#\left\{0\le i\neq  j< n:|T^ix-T^jx|\le \frac s{n^{\beta}}\right\}
	\label{eq:mainquestion}.
\end{equation*} Proceeding via analogy with the i.i.d case, one would expect that if $\mu$ is a $T$-invariant probability measure that is suitably mixing, then for an interval of $\beta,$ for $\mu$-almost every  $x$ we should have 
\begin{equation}
	\label{eq:expectedoneorbitasymp}
\lim_{n\to\infty}\frac{\#\left\{0\le i\neq  j< n:|T^ix-T^jx|\le \frac s{n^{\beta}}\right\}}{n^{2}\int \mu(B(z,\frac{s}{n^{\beta}}))\, d\mu(z)}= 1
\end{equation} for all $s>0.$ In Theorem \ref{mainthm:1orb} we verify this prediction for a family of interval maps and measures.  Notice that the divisor here is $n^{2}$ multiplied by the probability that $|x-y|\le s/n^{\beta}$ for $x, y$ chosen independently according to $\mu$, which if $\mu$ equals Lebesgue is $2sn^{2-\beta}$. 

We will also consider the pair correlations statistics generated by two orbits. That is for $s>0, \beta>0$, $n\in \N$ and $x,y\in X,$ we are interested in the almost sure asymptotic behaviour of
\begin{equation*}
	\#\left\{0\le i , j< n:|T^ix- T^jy|\le \frac s{n^{\beta}}\right\}
	\label{eq:mainquestiontwoorbit}.
\end{equation*}
Again we would expect that if $\mu$ is a $T$-invariant probability measure that is suitably mixing, then for an interval of $\beta$, for $(\mu\times \mu)$-almost every $(x,y)$ we should have 
\begin{equation}
	\label{eq:twoorbitexpectedasym}
\lim_{n\to\infty}\frac{\#\left\{0\le i,  j< n:d(T^ix, T^jy)\le \frac s{n^{\beta}}\right\}}{n^{2}\int \mu(B(z,\frac{s}{n^{\beta}}))\, d\mu(z)}=1
\end{equation} for all $s>0$. Theorem \ref{mainthm:2orb} similarly verifies this prediction for a family of interval maps and measures. 

Our main applications are to interval maps with good mixing properties, for example Gibbs-Markov interval maps and their absolutely continuous (with respect to Lebesgue measure) invariant probability measure (\emph{acip}), but as we show, our results also apply more widely to invariant probability measures with non-trivial multifractal behaviour. We also consider cases where the conclusion of our theory does not hold, i.e.\ the pair correlation statistics of a $\mu$-typical point do not coincide with those of a suitable i.i.d.\ sequence. Such examples are known to exists for ergodic transformations, e.g. by the discussion above irrational rotations of the circle have this property. This could however be attributed to the failure of the mixing property for these dynamical systems. It is natural to ask whether a quantitative rate of mixing would be sufficient to guarantee that the orbit of $\mu$-almost every $x$ behaves like an i.i.d.\ sequence from the perspective of pair correlation statistics. A natural place to look for examples, as in the case of the least distance problem (described below) in \cite{RouTod24},  is interval maps with subexponential mixing. By considering a suitable class of Manneville-Pomeau maps, we show that even a polynomial rate of mixing is insufficient to guarantee this property (see Section \ref{subsec:counterexample}).

We finish this opening discussion by highlighting a connection between pair correlation statistics and the notion of recurrence in dynamical systems. The foundational result in the study of recurrence is Poincar\'{e}'s recurrence theorem \cite[Theorem 1.4]{Waltersbook} which states that if $(X,\mathcal{A},\mu)$ is a probability space and $T:X\to X$ is a measure preserving transformation, then for any $A\in \mathcal{A}$ for $\mu$ almost every $x\in A$ there exist infinitely many $n\in \N$ such that $T^{n}x\in A$. Under fairly modest assumptions, it can be shown that if $X$ is equipped with a metric $d$, then Poincar\'{e}'s recurrence theorem implies that $\liminf_{n\to\infty}d(T^{n}x,x)=0$ for $\mu$-almost every $x$. It is natural to ask whether a more quantitative statement holds. Following earlier work of Boshernitzan \cite{Bos93} this problem has recently received significant interest. We refer the reader to \cite{AllBakBar25,BakFar21,BakKoi24,BarLiaRou19,Bos93,FreFreTod10,HolTod25,KirKunPer23, HolKirKunPer24, LeLiSiVe25,Sau09} and the references therein for more on this quantitative approach to recurrence. One approach to this problem is to study the smallest distance between all pairs of points chosen from an orbit, and between pairs of points chosen from two distinct orbits. More formally, this means understanding the quantities
$$M_{n}(x):=\min\{d(T^ix,T^jx): 0\leq i\neq j<n\}$$ and $$M_n(x, y) :=\min\left\{d(T^ix, T^jy): 0\le i, j<n\right\}$$ for $x,y\in X$ and $n\in \N$. The study of $M_{n}(\cdot)$ and $M_{n}(\cdot,\cdot)$ is related to longest substring problems in probability \cite{ArrWat85}, which have applications in the comparison of DNA strands \cite{ReScWa05,Wat95}. The study of pair correlation statistics can be viewed as a counting counterpart to this problem, where we count how many times a rate of recurrence is observed within an orbit. In \cite{BarLiaRou19}, the authors considered how the quantity
$M_n(x, y)$
scales with $n$ for $(\mu\times \mu)$-almost every $(x, y)$ for $\mu$ a $T$-invariant probability measure. For sufficiently nice dynamical systems and measures it was shown that 
$$\lim_{n\to\infty}\frac{-\log M_n(x, y)}{\log n} = \frac2{C_{\mu}}$$ for $(\mu\times\mu)$-almost every $(x, y),$
where $C_\mu$ is the correlation dimension (defined below). This result is significant as it tells us that the optimal range of $\beta$ for which we could expect the convergence in \eqref{eq:expectedoneorbitasymp} and \eqref{eq:twoorbitexpectedasym} to hold is $\beta\in (0,2/C_{\mu})$.  Moreover,  \cite{BarLiaRou19} introduces techniques which are useful for the problems addressed in the current paper.  In the case of a single orbit, where there are extra challenges with dependence which don't exist in the two-orbit case, a similar result was proved in \cite{Zha24}. A more precise analysis of the two-orbit case was done in \cite{KirKunPerTod25}, where near-optimal results were proved for liminf and limsup subsets defined in terms of the behaviour of $M_{n}(\cdot,\cdot)$. We also note that the case of random iteration was considered in \cite{GouRouSta24}. Recently for $(\alpha q_n)_n$, where $(q_n)_n$ is a sequence of integers and $\alpha\in \R$, the analogue of this minimal gaps problem was considered in \cite{Rud18, Reg23}. There the results relied on properties of the additive energy of the sequence.

\subsection{Definitions and main theorems}

We first define properties associated with our measures.
Given a Borel probability measure $\mu$ supported on $[0,1]$, we define the \emph{upper correlation dimension} of $\mu$ to be
$$\overline{C}_\mu:= \limsup_{r\searrow 0} \frac{\log \int\mu(B(x, r))~d\mu(x)}{\log r}.$$
 Similarly we define $\underline{C}_\mu$ the \emph{lower correlation dimension of $\mu$} via the liminf.  We generalise this to two Borel probability measures $\mu_1, \mu_2$, by defining  
 the \emph{upper correlation dimension of $(\mu_1, \mu_2)$} to be 
 $$\overline{C}_{\mu_1, \mu_2}:= \limsup_{r\searrow 0} \frac{\log \int\mu_1(B(x, r))~d\mu_2(x)}{\log r}$$
and similarly $\underline{C}_{\mu_1, \mu_2}$ using the liminf (note that by Fubini's theorem, switching $\mu_1$ and $\mu_2$ does not change these quantities).

  Given $s>0$ we say that a Borel probability measure $\mu$ is \emph{$s$-Ahlfors regular} if there exists
  $C>0$ such that
  \[
    \frac1Cr^s\le \mu(B(x, r))\le Cr^s 
  \]
  for all $x\in \mathrm{supp}(\mu)$ and $0<r<1$. Particular examples of $1$-Ahlfors regular measures are absolutely continuous invariant probability measures (acips) with density uniformly bounded away from zero and infinity. Acips are known to exist, and to satisfy the mixing assumptions appearing in our theorems, when $T$ is a Gibbs-Markov map (see Definition \ref{def:Gibbs-Markov}). Gibbs-Markov maps include piecewise smooth full branched interval maps with bounded distortion. Some of the measures we will consider will not be $s$-Ahlfors regular for any $s>0$. In this case a relevant scaling quantity is the Frostman dimension. We define the \emph{Frostman dimension} of a Borel probability measure $\mu$ supported on $[0,1]$ to be the supremum of those $s>0$ for which there is $C>0$ such that 
  $$ \mu(B(x, r))\le Cr^{s}$$ for all $x\in [0,1]$ and $0<r<1$.  We denote the Frostman dimension of such a measure by $F_{\mu}$.

\begin{definition}
We say that two Borel probability measures $\mu_1$ and $\mu_2$  have \emph{continuous mean scaling} if for any  $s, \beta,\eps>0$, there exists $\delta_0>0$ such that $0\le \delta\le \delta_0$ implies
$$ \left|1-\frac{\int \mu_1(B(x,(s\pm \delta)/n^\beta) d\mu_2}{\int \mu_1(B(x,s/n^\beta) d\mu_2}\right| < \eps$$ for all $n\in\mathbb{N}$. If $\mu_1=\mu_2$ we say that \emph{$\mu_1$ has continuous mean scaling}.
\end{definition}

We next define the mixing properties of our dynamical systems.  
  
  \begin{definition}
  Let $(X, T, \mu)$ denote a measure preserving system where $X\subset [0,1],$ and let $(\mathcal{C}^1, \|\cdot\|_{1})$ and $(\mathcal{C}^2, \|\cdot\|_{2})$ be Banach spaces of observables on $[0,1]$. If there exists
  $C, \theta>0$ such that for all $\psi\in \mathcal{C}^1$ and
  $\phi\in \mathcal{C}^2$,
  \[
    \biggl|\int \psi\cdot \phi\circ T^n \, d\mu- \int \psi \,
    d\mu \int \phi \, d\mu \biggr| \le
    C\|\psi\|_{1}\|\phi\|_{2} \e^{-\theta n},
  \]
  then we say that $(X, T, \mu)$ has \emph{exponential mixing for
    $\mathcal{C}^1$ against $\mathcal{C}^2$}.
\end{definition}

We will usually assume that $\mathcal{C}^1$ are the functions of bounded variation $BV$ with the usual norm $\|\psi\|_{BV} = |\psi|_{Var} + \|\psi\|_{L^1}$, and that $\mathcal{C}^2=L^\infty$ with the sup norm.

\begin{definition}
  Let $(X, T, \mu)$ denote a measure preserving system where $X\subset [0,1]$. If there exist $C', \theta'>0$ such that whenever $\1_{1},\1_{2},\1_{3},\1_{4}$ are indicator functions on intervals and $0
  \leq a < b \leq c,$ then
  \begin{align*}
    & \biggl | \int \1_1\cdot \1_2\circ T^a\cdot \1_3\circ
      T^b\cdot 1_4\circ T^c \, d\mu - \int \1_1\cdot
      \1_2\circ T^a \, d\mu \int \1_3\cdot
      \1_4\circ T^{c-b} \, d\mu \biggr|\\
    &\hspace{8cm} \le C' \e^{-\theta'
      (b-a)},
  \end{align*}
  then we say that $(X, T, \mu)$ has \emph{exponential 4-mixing
    for intervals}.
\end{definition}
Note that in the definition of exponential $4$-mixing for intervals that if we set $\1_1=\1_4=\mathrm{Id}$ then we obtain a similar statement but for products of two indicator functions on intervals. This statement coincides with what one would obtain from assuming exponential mixing for $BV$ against $L^{\infty}.$ The exponential $4$-mixing for intervals property is known to hold for Gibbs--Markov interval maps, see \cite[Lemma~4.16]{Zha24}.

Our first main theorem is as follows; note that a version not assuming the existence of $C_{\mu_1, \mu_2}$ is given later in Theorem~\ref{thm:2orbgen}.

\begin{maintheorem}
Suppose that $(X, T_1, \mu_1)$ and $(X, T_2, \mu_2)$ satisfy the following properties:
\begin{itemize}
\item $(X, T_i, \mu_i)$ have exponential mixing for
    $BV$ against $L^\infty$ for $i=1, 2$. 
\item  $C_{\mu_1}, C_{\mu_2}$ and $C_{\mu_1, \mu_2}$ exist, belong to $(0, \infty)$, and satisfy $C_{\text{max}}>0$ for $C_{\text{max}}:= \max_{i=1, 2}\left\{2C_{\mu_1, \mu_2}-C_{\mu_i}\right\}$.
\item  $\mu_1$  and $\mu_2$ have continuous mean scaling.
\end{itemize}
  Then for
  $\beta\in (0, 2/C_{\text{max}})$, for  $(\mu_1\times\mu_2)$-a.e.\ $(x, y)$, for all $s>0$, 
$$\lim_{n\to \infty}  \frac{ \#\left\{0\le i, j<n:|T_1^ix- T_2^jy|\le \frac s{n^{\beta}}\right\}}{n^2\int\mu_1\left(B\left(z,\frac s{n^{\beta}}\right)\right)~d\mu_2(z)}= 1.$$
\label{mainthm:2orb}
\end{maintheorem}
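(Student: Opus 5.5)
Write $r_n=s/n^{\beta}$ and $S_n(x,y)=\sum_{0\le i,j<n}\1_{\{|T_1^ix-T_2^jy|\le r_n\}}$. The approach is the standard second-moment method along a sparse subsequence, followed by interpolation. By invariance of $\mu_1$ and $\mu_2$, the mean is $\E S_n=n^2 I(r_n)$, where $I(r)=\int\mu_1(B(z,r))\,d\mu_2(z)$. It therefore suffices to show that $\mathrm{Var}(S_n)\le (\E S_n)^2 n^{-\gamma}$ for some $\gamma>0$. Chebyshev's inequality and Borel--Cantelli then give $S_{n_k}/\E S_{n_k}\to 1$ almost surely along $n_k=\lfloor k^{\kappa}\rfloor$ with $\kappa\gamma>1$ (or a geometric sequence $(1+\eta)^k$ if the bound is polynomial enough).

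To fill the gaps, take $n_k\le n<n_{k+1}$ and use monotonicity:
\[
S_{n_k}(s_-)\le S_n(s)\le S_{n_{k+1}}(s_+),
\]
where $s_\pm$ are chosen so that $s_-/n_k^\beta\le s/n^\beta\le s_+/n_{k+1}^\beta$ and $s_\pm\to s$ because $n_{k+1}/n_k\to1$. Continuous mean scaling lets us compare $I(s_\pm/n^\beta)$ with $I(s/n^\beta)$ up to a factor $1\pm\eps$, and $n_{k+1}^2/n_k^2\to1$. To get the result for all $s>0$ simultaneously, prove it first for rational $s$ (a countable union of null sets), then sandwich a general $s$ between rationals, again using continuous mean scaling.

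The variance is computed as
\[
\mathrm{Var}(S_n)=\sum_{i,j,i',j'}\Bigl[\mu_1\times\mu_2\bigl(A_{ij}\cap A_{i'j'}\bigr)-\mu_1\times\mu_2(A_{ij})\,\mu_1\times\mu_2(A_{i'j'})\Bigr],
\]
where $A_{ij}=\{|T_1^ix-T_2^jy|\le r_n\}$. Split the quadruples according to $a=|i-i'|$ and $b=|j-j'|$, with a threshold $L=K\log n$.

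\begin{itemize}
\item \emph{Both gaps large} ($a,b>L$). Fix $y$ and integrate in $x$. The function $x\mapsto \1_{B(T_2^jy,r_n)}(x)$ is an interval indicator (BV norm at most $3$) and the other factor is in $L^\infty$. Exponential mixing for $T_1$ decouples the $x$-integral with error $O(e^{-\theta a})$. Doing the same in $y$ for $T_2$ gives an error $O(e^{-\theta b})$. The total contribution is at most $n^4e^{-\theta L}$, which is negligible for $K$ large, since $(\E S_n)^2\ge n^{4-2\beta C_{\mu_1,\mu_2}-\eps}$ by the existence of $C_{\mu_1,\mu_2}$.
\item \emph{One gap small} (say $a\le L$, $b>L$). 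Only $x$ is correlated. Decoupling in $y$ by mixing for $T_2$ reduces the term, up to exponentially small errors, to
\[
\int\!\!\int \1_{B(T_1^ix,r_n)}(w)\,\1_{B(T_1^{i'}x,r_n)}(w')\,d\mu_2(w)\,d\mu_2(w')\,d\mu_1(x)\le\int\mu_2(B(T_1^ix,r_n))^2\,d\mu_1(x).
\]
By Cauchy--Schwarz and invariance this is at most $\int\mu_2(B(z,r_n))^2\,d\mu_1(z)$. The crude bound $\mu_2(B(z,r))^2\le \mu_2(B(z,r))\cdot\sup$ is insufficient, so instead bound by Hölder/Fubini: $\int\mu_2(B(z,r))^2\,d\mu_1\le \int\mu_2(B(z,2r))\,\mu_1(B(w,2r))\,d\mu_2(w)$-type expressions. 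This is where $C_{\mu_i}$ enters, giving roughly $r^{2C_{\mu_1,\mu_2}\cdot}$ against $r^{C_{\mu_i}}$. The number of such quadruples is $n^3L$. The ratio to $(\E S_n)^2=n^4 r_n^{2C_{\mu_1,\mu_2}}$ is then about $L\,n^{-1}r_n^{C_{\mu_i}-2C_{\mu_1,\mu_2}}=n^{-1+\beta(2C_{\mu_1,\mu_2}-C_{\mu_i})+o(1)}$. This is polynomially small exactly when $\beta<1/C_{\max}$.
\item \emph{Both gaps small.} There are $n^2L^2$ such terms, each at most $\mu_1\times\mu_2(A_{ij})=I(r_n)$. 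The ratio is $L^2/(n^2I(r_n))$, which is small when $\beta C_{\mu_1,\mu_2}<2$, and this is implied by the hypothesis on $\beta$.
\end{itemize}

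The main obstacle is the one-gap-small term. Its exponent, as I computed it, yields $\beta<1/C_{\max}$ rather than the claimed $2/C_{\max}$, so the bookkeeping must be sharper. I would first try counting more carefully: use $n^3$ versus $n^4$ together with the quantity $\int\mu_2(B(z,r))^2\,d\mu_1\approx r^{\,\cdot}$. The correct comparison should be $n^{3}r^{\tau}$ against $n^4 r^{2C_{\mu_1,\mu_2}}$, where $\tau$ is the exponent of the "three-point" correlation, bounded below using $C_{\mu_i}$ via the inequality $\int\mu_2(B(z,r))^2\,d\mu_1(z)\lesssim r^{2C_{\mu_1,\mu_2}-C_{\mu_2}}\cdot\ldots$. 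I would tune the factor of $2$ in the threshold accordingly, allowing Chebyshev along the subsequence $n_k$ with a weaker polynomial decay rate. Otherwise I would accept a loss and sharpen by a truncation argument restricted to the set where $\mu_2(B(z,r))\le r^{C_{\mu_2}-\eps}$, whose complement has small measure.
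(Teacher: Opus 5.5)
Your architecture matches the paper's proof: a second moment bound for $S_n$, a split of the quadruples according to whether $|i-i'|$ and $|j-j'|$ exceed a slowly growing threshold, Chebyshev and Borel--Cantelli along a polynomial subsequence, and interpolation in $n$ and $s$ via continuous mean scaling and rational $s$. The doubly separated and doubly close cases are fine. In the doubly separated case you implicitly use that $y\mapsto\mu_1(B(y,r))$ has bounded variation (at most $2$), so that mixing for $T_2$ applies after integrating out $x$.

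The genuine gap is the half-separated term, exactly where you suspected. Your estimate $\int\mu_2(B(z,r_n))^2\,d\mu_1(z)\approx r_n^{C_{\mu_i}}$ does not follow from the rearrangement you sketch. Even granted, it only gives $\beta(2C_{\mu_1,\mu_2}-C_{\mu_i})<1$, i.e.\ $\beta<1/C_{\text{max}}$; for acips that is $\beta<1$ instead of $\beta<2$. Neither repair closes this. A weaker decay rate in Chebyshev does not change the exponent of the variance ratio. The existence of $C_{\mu_2}$ gives no quantitative pointwise bound $\mu_2(B(z,r))\le r^{C_{\mu_2}-\eps}$ off a set of controlled $\mu_1$-measure, which the truncation would need. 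Moreover, any rearrangement of the triple integral (e.g.\ to $\int\mu_2(B(w,2r))\,\mu_1(B(w,r))\,d\mu_2(w)$) still needs a uniform bound on one of the ball masses.

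That uniform bound is the missing ingredient; it is the paper's Lemma~\ref{lem:tight}. For every $y$, since $B(x,2r)\supseteq B(y,r)$ whenever $x\in B(y,r)$,
\[
\mu_2(B(y,r))^2\le\int_{B(y,r)}\mu_2(B(x,2r))\,d\mu_2(x)\le\int\mu_2(B(x,2r))\,d\mu_2(x)\le 5\int\mu_2(B(x,r))\,d\mu_2(x).
\]
The last step follows by covering with a grid of mesh $r$ and applying Cauchy--Schwarz. Hence $\sup_y\mu_2(B(y,r))\lesssim r^{C_{\mu_2}/2-o(1)}$, and the ``crude'' bound you discarded is in fact the right one:
\[
\int\mu_2(B(z,r_n))^2\,d\mu_1(z)\le\sup_y\mu_2(B(y,r_n))\int\mu_2(B(z,r_n))\,d\mu_1(z)\lesssim r_n^{C_{\mu_2}/2+C_{\mu_1,\mu_2}-o(1)}.
\]
Multiply this by the $n^3L$ quadruples and divide by $(\E S_n)^2=n^{4}r_n^{2C_{\mu_1,\mu_2}+o(1)}$. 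This gives $n^{-1+\beta(2C_{\mu_1,\mu_2}-C_{\mu_2})/2+o(1)}$, which is polynomially small precisely when $\beta(2C_{\mu_1,\mu_2}-C_{\mu_2})<2$. Together with the symmetric term (which involves $C_{\mu_1}$), this is exactly the hypothesis $\beta<2/C_{\text{max}}$.
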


An example of an application of Theorem \ref{mainthm:2orb} is to the following type of interval map, which we give a particular version of.

\begin{definition}
	\label{def:Gibbs-Markov}
Let $\tilde X\subset [0,1]$ be a set that can be written as $\tilde X=\cup_{P\in\P} P$ where $\P$ is an at most countable collection of intervals.  Suppose $\tilde T:\tilde X\to I$ is a function satisfying the following properties:
\begin{itemize}
\item for each $P\in \P$, $\tilde T(P)\cap \tilde X$ is a union of elements of $\P$;
\item $\tilde T$ is $C^{1}$ on each $P\in \P$ and there exist $\iota, C_\iota>0$ such that for any $P\in \P$, if $x, y\in P$ then $|D\tilde T(x)- D\tilde T(y)|\le C_\iota|x-y|^\iota$;

\item there exists $\lambda>1$ such that $|D\tilde T(x)|\ge\lambda$ for all $x\in \tilde X$;
\item we have bounded distortion: there exists $C>0$ such that if $x, y\in P\in \P$ then $\left|\frac{D\tilde T(x)}{D\tilde T(y)}-1\right|\le C|x-y|$;
\item there exists $b_0>0$ such that $Diam(\tilde T(P))\ge b_0$ for all $P\in \P$.
\end{itemize}
Define the \emph{attractor} to be
$$X = X_{\tilde T}:= \left\{x\in \tilde{X}: \tilde T^ix\in \cup_{P\in \P}P\text{ for all } i\ge 0\right\}.$$ We denote the restriction of $\tilde{T}$ to $X$ by $T:X\to X$ and call $T$ a \emph{Gibbs-Markov map}.
\end{definition}

Note that we will sometimes abuse notation and discuss $T$ acting outside of the attractor $X$. If $X_{\tilde T} =I$, then these maps have a unique acip, with density $\rho$ in BV and bounded away from 0 and $\infty,$ and which satisfies the mixing conditions of our main theorems: in this case we will write $T=\tilde T$.  Basic examples of these maps are $x\mapsto kx \mod 1$ for $k\in \N$, see also Farey maps and L\"uroth maps (eg \cite{KesMunStr12}). 
At the same time as considering these maps we will consider the \emph{Gauss map} $x\mapsto 1/x\mod 1$ with acip given by the Gauss measure.  We emphasise that the Gauss map behaves like a Gibbs-Markov map, see for example \cite[Section 2.6.3]{BruDemTod18}, despite there being a point $x$ where $|DT(x)|=1$ (i.e., for $x=1$). 
The proof of the following is given in Section~\ref{sec:apps}.

\begin{corollary}
Suppose that $(I, T_i)$ are Gibbs-Markov maps or the Gauss map, with acips and densities denoted by $\mu_i$ and $\rho_i$ for $i=1, 2$, respectively. 
 Then for  $\beta\in (0, 2)$,
$$\frac1{n^{2-\beta}\int(\rho_1\rho_2)(t)~dt}  \#\left\{0\le i, j< n:|T_1^ix- T_2^jy|\le \frac s{n^{\beta}}\right\}\to 2s,$$
for $\mu_1\times\mu_2$ a.e. $(x, y)$.
\label{cor:2orbacip}
\end{corollary}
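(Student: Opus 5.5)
We derive Corollary~\ref{cor:2orbacip} from Theorem~\ref{mainthm:2orb}, so we verify its three hypotheses for the acips $\mu_i=\rho_i\,dt$ and then evaluate the normalising integral asymptotically.

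\emph{Mixing.} For Gibbs--Markov maps with $X_{\tilde T}=I$, the acip has density in $BV$ bounded away from $0$ and $\infty$. The transfer operator has a spectral gap on $BV$, so we have exponential decay of correlations for $BV$ against $L^\infty$. For the Gauss map the same holds with the Gauss measure; we will cite the standard references, e.g.\ \cite[Section 2.6.3]{BruDemTod18}.

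\emph{Correlation dimensions and mean scaling.} The densities satisfy $c\le\rho_i\le c^{-1}$ on $I$; for the Gauss map $\rho(t)=\frac{1}{\log 2(1+t)}$. Hence $\mu_i(B(z,r))$ lies between $c r$ and $2c^{-1}r$ for $r<1$, which gives $C_{\mu_1}=C_{\mu_2}=C_{\mu_1,\mu_2}=1$ and $C_{\max}=1$. The admissible range $\beta\in(0,2)$ is therefore exactly $(0,2/C_{\max})$.

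For the precise asymptotics we write
$$\int\mu_1(B(z,r))\,d\mu_2(z)=\int\!\!\int_{|t-z|\le r}\rho_1(t)\rho_2(z)\,dt\,dz .$$
Since $\rho_1\in BV$ (indeed in $L^1$ suffices), the average $\frac1{2r}\int_{z-r}^{z+r}\rho_1$ converges to $\rho_1(z)$ for a.e.\ $z$ by Lebesgue differentiation, and it is uniformly bounded. Dominated convergence then gives
$$\int\mu_1(B(z,r))\,d\mu_2(z)=2r\int\rho_1\rho_2\,dt\,(1+o(1)),$$
where a boundary loss of order $r^2$ near $0$ and $1$ is harmless. Note also that $\int\rho_1\rho_2>0$.

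Continuous mean scaling then follows. For small $n$ the integral is continuous in the radius, because $\mu_1$ is non-atomic and bounded convergence applies. For large $n$ the ratio is $\frac{s\pm\delta}{s}(1+o(1))$. Combining the two regimes, we choose $n_0$ using the asymptotic and then $\delta_0$ uniformly for the finitely many $n<n_0$.

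Applying Theorem~\ref{mainthm:2orb} with $r=s/n^\beta$, the denominator equals $2sn^{2-\beta}\int\rho_1\rho_2\,(1+o(1))$, which yields the stated limit $2s$.

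The main obstacle is confirming that the cited mixing results apply verbatim: exponential mixing for $BV$ against $L^\infty$ with respect to $\mu_i$ rather than Lebesgue, and for the Gauss map despite its infinitely many branches and $|DT(1)|=1$. Handling this requires invoking the Gibbs--Markov framework and, if needed, passing from $BV$ against $L^1(\text{Leb})$ to $\mu$ using that $\rho_i$ is bounded above and below.
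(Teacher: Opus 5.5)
Your proposal is correct and follows the same route as the paper. Both proofs verify the hypotheses of Theorem~\ref{mainthm:2orb} from the two-sided bounds on $\rho_1,\rho_2$ (so $C_{\mu_1}=C_{\mu_2}=C_{\mu_1,\mu_2}=1$ and $C_{\text{max}}=1$), cite \cite{Ryc83} and \cite{BruDemTod18} for exponential mixing of $BV$ against $L^\infty$, and then replace the normaliser by $2sn^{2-\beta}\int\rho_1\rho_2$. The differences are only in the supporting steps.

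\textbf{The asymptotic $\int\mu_1(B(z,r))\,d\mu_2(z)\sim 2r\int\rho_1\rho_2$ (Lemma~\ref{lem:acipdense}).} The paper cuts $[0,1]$ into $m$ equal pieces and bounds the oscillation of $\rho_1$ on each piece by its variation. This gives an explicit error of order $2r\bigl(\frac1m\|\rho_2\|_\infty\mathrm{Var}_0^1(\rho_1)+rm\|\rho_1\|_\infty\|\rho_2\|_\infty\bigr)$, and hence a rate after optimising in $m$. Your Lebesgue differentiation plus dominated convergence argument is shorter and avoids $BV$ entirely, but yields only $o(r)$, which is all that is needed. Your remark ``$L^1$ suffices'' is slightly off as written: the uniform bound on the averages $\frac1{2r}\int_{z-r}^{z+r}\rho_1$ uses boundedness of $\rho_1$. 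If $\rho_1$ were merely integrable, you would instead move the average onto the bounded $\rho_2$ via Fubini. This is harmless here, since both densities are bounded.

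\textbf{Continuous mean scaling.} The paper simply asserts that it follows from the two-sided density bounds (compare Lemma~\ref{lem:backdouble} for a single measure). You actually check it for the pair $(\mu_1,\mu_2)$: the asymptotic handles all large $n$, uniformly in $\delta\le\delta_0<s/2$, and continuity of the integral in the radius (since $\mu_1$ is non-atomic) handles the finitely many remaining $n$. This fills in a step the paper leaves implicit.
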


 So in the 
 special case when $\mu_1=\mu_2=$ Lebesgue, for  $\beta\in (0, 2)$ $$\frac1{n^{2-\beta}}  \#\left\{0\le i, j< n:|T_1^ix- T_2^jy|\le \frac s{n^{\beta}}\right\}\to 2s.$$
 
 A Gibbs-Markov map where $\P$ is a finite set, its elements have disjoint closures and $\tilde T(P)= I$ for all $P\in \P$ is called a \emph{cookie cutter}, a version of which is in \cite{Ran89}.  Note that here $X_{\tilde T} \neq I$ and 
  the Hausdorff dimension $h$ of $X_{\tilde T}$ is strictly less than 1.
 If $\phi:\tilde X\to \R$ is H\"older on each $P\in \P$ then there is a unique equilibrium state $\mu=\mu_\phi$.  Note one example of this is $\phi=-h\log|D\tilde T|$, in which case $\mu_\phi$ is the natural `geometric measure' and is $h$-Ahlfors regular. It is the analogue to the acips mentioned above in this case. The system $(X, T, \mu)$ then satisfies the conditions of our main theorems as we show in Section~\ref{sec:apps}, and hence we have the conclusion of Theorem~\ref{mainthm:2orb} for this family of maps and measures. This conclusion will also hold more generally for equilibrium states (see Section~\ref{sec:apps} for the definition).

Our second main theorem considers the case of a single orbit. This introduces significantly more dependence and has a much more involved proof.  In this case we require an extra condition (as well as exponential 4-mixing for intervals).  Given a measure preserving transformation $(X,T,\mu)$ where $X\subset [0,1]$, let $$A_r (n) := \{\, x : |T^n x-x| < r \,\}.$$
 We say that $(X,T,\mu)$ satisfies the \emph{early return property} if there exist $C,s>0$ such that  
\begin{equation}
	\mu \left(A_r(n)\right) \leq C r^{s}
    \label{eq:shortreturnestimate}
\end{equation}
for all $r>0$ and $n\in \N$. We let $D_{\mu}$ denote the supremum of those $s>0$ for which \eqref{eq:shortreturnestimate} holds for some $C>0$ and call $D_{\mu}$ the \emph{early return exponent}. We give examples of systems satisfying the early return property in Section~\ref{sec:apps}.

\begin{maintheorem}
Assume that $(X,T,\mu)$ satisfies the following properties:
\begin{itemize}
	\item $(X, T, \mu)$ has exponential mixing for
	$BV$ against $L^\infty$ and exponential $4$-mixing for intervals.
	\item $(X, T, \mu)$ has the early return property with exponent $D_{\mu}$.
	\item The correlation dimension of $\mu$ exists and is strictly positive. We denote it by $C_{\mu}$.
	\item $\mu$ has continuous mean scaling.
\end{itemize}
Then for $\beta\in (0,2/C_{\mu})$ satisfying the following inequalities:
\begin{itemize}
	\item $\beta(C_{\mu}-D_{\mu})<1,$
	\item $\beta(C_{\mu}-F_{\mu})<1,$
\end{itemize}
for $\mu$ almost every $x,$ for all $s>0$ we have
$$\lim_{n\to \infty}  \frac{ \#\left\{0\le i\neq  j<n:|T^ix- T^jx|\le \frac s{n^{\beta}}\right\}}{n^2\int\mu\left(B\left(z,\frac s{n^{\beta}}\right)\right)~d\mu(z)}= 1.$$
\label{mainthm:1orb}
\end{maintheorem}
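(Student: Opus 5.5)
We follow the standard second-moment/Borel--Cantelli strategy along a subsequence. For fixed $s>0$ set $r_n=s/n^\beta$ and
$$S_n(x)=\sum_{0\le i\neq j<n}\1_{\{|T^ix-T^jx|\le r_n\}},\qquad E_n=n^2\int\mu(B(z,r_n))\,d\mu(z),$$
so that $E_n=n^{2-\beta C_\mu+o(1)}$, which tends to infinity because $\beta<2/C_\mu$. The plan has four steps: show $\E S_n\sim E_n$; show $\mathrm{Var}(S_n)\le E_n^2 n^{-\gamma}$ for some $\gamma>0$; apply Chebyshev and Borel--Cantelli along $n_k=\lfloor k^{L}\rfloor$ with $L\gamma>1$; and interpolate between $n_k$ and $n_{k+1}$. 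The interpolation uses monotonicity of $S_n$ in $n$ and in the radius, together with $n_{k+1}/n_k\to1$ and continuous mean scaling. Continuous mean scaling absorbs the passage from $r_{n}$ to $s n_{k}^{-\beta}\cdot(1\pm\delta)$. Finally, a countable dense set of $s$ plus monotonicity in $s$ (again using continuous mean scaling) gives the statement for all $s>0$.

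\textbf{Expectation.} By invariance, $\E S_n=2\sum_{k=1}^{n-1}(n-k)\,\mu(A'_{r_n}(k))$, where $A'_r(k)=\{|T^kx-x|\le r\}$. We split at $k_0=K\log n$.
\begin{itemize}
\item[] For $k\ge k_0$, approximate $\1_{\{|x-y|\le r\}}$ by sums of products $\1_{I}(x)\1_{J}(y)$ over a partition into intervals of length $\eta r_n$. Exponential mixing for $BV$ against $L^\infty$ then gives $\mu(A'_{r_n}(k))=\int\mu(B(z,r_n(1\pm O(\eta))))\,d\mu+O(\eta^{-1}r_n^{-1}e^{-\theta k})$. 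Continuous mean scaling controls the $\eta$-perturbation, and the exponential error is negligible once $K$ is large.
\item[] For $k<k_0$, the early return property bounds the contribution by $n\log n\, r_n^{D_\mu-\epsilon}$. This is $o(E_n)$ exactly when $\beta(C_\mu-D_\mu)<1$ (up to $\epsilon$), which is where the first inequality enters.
\end{itemize}

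\textbf{Variance.} This is the main obstacle. Write $S_n^2$ as a sum over quadruples $(i,j,i',j')$ of $\mu(|T^ix-T^jx|\le r_n,\ |T^{i'}x-T^{j'}x|\le r_n)$, order the four indices as $a_1\le a_2\le a_3\le a_4$, and split according to the size of the gaps.

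\begin{itemize}
\item[] \emph{Separated pairs.} If the two pairs are separated, i.e.\ the middle gap between $\{i,j\}$ and $\{i',j'\}$ exceeds $K\log n$, use exponential $4$-mixing for intervals after the same interval discretisation. The term then factorises up to an error $e^{-\theta' K\log n}$ times the number of discretisation pieces, and these terms cancel against $(\E S_n)^2$.
\item[] \emph{Interlaced pairs.} In configurations where the pairs interlace, such as $i<i'<j<j'$ with large gaps, the 4-mixing statement is not directly in the right form. Here I would bound the term by the product, using mixing between the blocks $\{a_1,a_2\}$ and $\{a_3,a_4\}$ and conditioning on the localisation of the points.
\item[] \emph{Short gaps.} The remaining terms have at least one short gap ($\le K\log n$), and we only need upper bounds. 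When an index is shared, e.g.\ $j=j'$, the term is essentially $\mu(|T^ix-T^jx|\le r_n,|T^{i'}x-T^jx|\le r_n)$. Mixing reduces this to $\int\mu(B(z,r_n))^2\,d\mu\le Cr_n^{F_\mu-\epsilon}\int\mu(B(z,r_n))\,d\mu$. Summed over about $n^3$ triples, this gives $n^3r_n^{F_\mu+C_\mu}$. It is $o(E_n^2)=o(n^4r_n^{2C_\mu})$ with polynomial room precisely when $\beta(C_\mu-F_\mu)<1$, which is where the second inequality enters.
\item[] Terms with short gaps inside a pair are handled by the early return bound as in the expectation step.
\item[] Terms where both pairs coincide contribute $\E S_n=o(E_n^2)$.
\end{itemize}

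I expect the careful bookkeeping of all index orderings to be the hard part, in particular the interlaced orderings and obtaining a genuine polynomial saving $n^{-\gamma}$. For these I would first try discretising into $\eta r_n$-intervals and treating each quadruple via the 4-mixing bound with the largest available gap, accepting a loss of a power of $r_n^{-1}$ that is killed by the $e^{-\theta' K\log n}$ factor.
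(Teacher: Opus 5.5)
Your architecture matches the paper's: first moment, second moment with a polynomial rate, Borel--Cantelli along a polynomial subsequence, then interpolation via continuous mean scaling. The gap is in the step meant to produce $\mathrm{Var}(S_n)\le E_n^2 n^{-\gamma}$, which fails with the discretisation you use.

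\textbf{Why the variance bound fails.} Take intervals of length $\eta r_n$ with $\eta$ a fixed constant, and let $B_I^\pm$ have radius $r_n(1\pm\eta)$. Mixing then sandwiches each well-separated term $\mu(E_q\cap E_{q'})$ between $\bigl(\sum_I\mu(I)\mu(B_I^-)\bigr)^2$ and $\bigl(\sum_I\mu(I)\mu(B_I^+)\bigr)^2$, up to negligible mixing errors. The product $\mu(E_q)\mu(E_{q'})$ is sandwiched in the same way. So the ``cancellation against $(\E S_n)^2$'' is only exact up to the distance between the two bounds. That distance is controlled by the thin-annulus integral $\int\mu\bigl(B(z,r_n(1+2\eta))\setminus B(z,r_n(1-2\eta))\bigr)\,d\mu(z)$. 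Continuous mean scaling is purely qualitative: it bounds this by $c(\eta)\int\mu(B(z,r_n))\,d\mu(z)$ with $c(\eta)\to0$ uniformly in $n$, but gives no decay in $n$. This is harmless for the expectation, where you only need $\E S_n\sim E_n$. For the variance, however, summing over the $\asymp n^4$ well-separated quadruples leaves a residue of order $c(\eta)E_n^2$. You only get $\limsup_n \mathrm{Var}(S_n)/E_n^2\lesssim c(\eta)$, with no rate, so Chebyshev plus Borel--Cantelli along $n_k=\lfloor k^L\rfloor$ cannot be run. Letting $\eta=\eta_n\to0$ does not help unless you have a quantitative annulus estimate.

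\textbf{The missing idea.} That quantitative estimate is what you need. The paper discretises at the polynomially finer dyadic scale $2^{-m}\approx r_n^{1+\epsilon}$ (the conditioning \eqref{eq:final conditioning}) and proves Lemma~\ref{lem:Riem1}. By Cauchy--Schwarz the annulus term is at most a constant times $\int\mu(B(x,8\cdot 2^{-m}))\,d\mu(x)$. Because $C_\mu$ exists and is positive, this is $\lesssim n^{-\beta C_\mu(1+\epsilon/2)}$, polynomially smaller than $\int\mu(B(x,r_n))\,d\mu(x)=n^{-\beta C_\mu+o(1)}$. This gives both $\E S_n$ and the well-separated part of $\E S_n^2$ with polynomially small relative errors (Propositions~\ref{prop:expectation} and~\ref{prop:Well separated}). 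The price is the overlap count $2^m r_n\lesssim n^{\beta\epsilon}$ from \eqref{eq:counting bounds} in the gap cases, which is absorbed by taking $\epsilon$ small. Continuous mean scaling is then used only in the final interpolation.

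\textbf{An alternative repair.} You could instead keep $\eta$ fixed and run the second-moment argument on the sandwiching counts $\tilde S_n^\pm(x)=\sum_{i\neq j}\sum_I \1_I(T^ix)\1_{B_I^\pm}(T^jx)$. Their well-separated covariances vanish up to mixing errors, since every term expands into products of interval indicators that factorise exactly. Only afterwards would you let $\eta\to0$ using continuous mean scaling. Some such device is required either way.

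\textbf{Interlaced orderings.} These need nothing new. When all gaps are large, peel off the earliest indicator (in $BV$) against the rest (in $L^\infty$), repeatedly, exactly as in Proposition~\ref{prop:Well separated}.
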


As in the two-orbit case, if we have an acip with a density in BV, as for example in the case of Gibbs-Markov or Gauss maps, we have a result which is simpler to state. 

\begin{corollary}
Suppose that $(I, T)$ is a Gibbs-Markov map or the Gauss map, with acip $\mu$ and density $\rho$.  Then for all   $\beta\in (0, 2)$, for $\mu$-a.e.\ $x$ and all $s>0$,
$$\frac1{n^{2-\beta}\int\rho(t)^2~dt}  \#\left\{0\le i\neq  j< n:|T^ix- T^jx|\le \frac s{n^{\beta}}\right\}\to 2s.$$
\label{cor:1orbacip}
\end{corollary}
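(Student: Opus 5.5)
We derive Corollary~\ref{cor:1orbacip} from Theorem~\ref{mainthm:1orb} by checking its hypotheses for $\mu=\rho\,dx$ and then identifying the normalising integral. For Gibbs-Markov maps with $X_{\tilde T}=I$, and for the Gauss map, the density $\rho$ is in $BV$ and is bounded above and below by positive constants. Exponential mixing for $BV$ against $L^\infty$ is standard in this setting, via the spectral gap of the transfer operator on $BV$. Exponential $4$-mixing for intervals is \cite[Lemma~4.16]{Zha24}, and for the Gauss map we use that it behaves like a Gibbs-Markov map \cite[Section 2.6.3]{BruDemTod18}.

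Since $\rho$ is bounded above and below, $\mu$ is $1$-Ahlfors regular, so $F_\mu=1$. We next show $C_\mu=1$ and compute the normalisation. We have $\int \mu(B(z,r))\,d\mu(z)=\int\int_{z-r}^{z+r}\rho(t)\,dt\,\rho(z)\,dz$. Because $\rho\in BV$, it is continuous except at countably many points, so Lebesgue differentiation together with dominated convergence gives
$$\frac{1}{2r}\int\mu(B(z,r))\,d\mu(z)\to\int\rho^2\,dt.$$
(One could also bound the error directly by $r\,|\rho|_{Var}\|\rho\|_\infty$, up to boundary effects.) Hence $C_\mu=1$. Continuous mean scaling follows immediately, since the ratio in that definition is $(s\pm\delta)/s+o(1)$ uniformly in $n$. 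The only remaining issue is uniformity for small $n$, where each ratio is close to $1$ by continuity in the radius, so this is harmless. Given this asymptotic, the normalisation $n^2\int\mu(B(z,s/n^\beta))\,d\mu$ equals $2s\,n^{2-\beta}\int\rho^2\,(1+o(1))$, which converts the conclusion of Theorem~\ref{mainthm:1orb} into the stated limit $2s$.

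The main obstacle is the early return property, and in particular showing $D_\mu=1$ or at least $D_\mu$ large enough. With $C_\mu=F_\mu=1$, the constraints on $\beta$ become $\beta<2$, $\beta(1-D_\mu)<1$ and $\beta\cdot 0<1$. So we need $D_\mu\ge 1/2$, and $D_\mu=1$ would suffice comfortably. To prove $\mu(A_r(n))\le Cr$ uniformly in $n$, we decompose $I$ into $n$-cylinders $Z$. On each $Z$, $T^n$ is a diffeomorphism onto its image with derivative at least $\lambda^n$ and bounded distortion. Therefore $\{x\in Z:|T^nx-x|<r\}$ is contained in an interval of length at most roughly $2r/(|DT^n|-1)$, since $T^nx-x$ has derivative at least $|DT^n|-1$ in absolute value. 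Using $\mu(Z)\asymp \mathrm{Leb}(T^nZ)/|DT^n|$ and $\mathrm{Leb}(T^nZ)\ge b_0$ then gives $\mu(A_r(n)\cap Z)\le C r\,\mu(Z)/b_0$. Summing over $Z$ yields $\mu(A_r(n))\le C'r$.

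For $n=1$ the estimate requires $\lambda>1$. For the Gauss map we instead use $T^2$-expansion, or handle the neighbourhood of $x=1$ directly, since $|DT|>1$ away from that point and there is a uniform bound on the small-$n$ cases. This gives $D_\mu\ge1$, so all inequalities hold for every $\beta\in(0,2)$, and Theorem~\ref{mainthm:1orb} applies.
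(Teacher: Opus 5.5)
Your proposal is correct and follows the paper's route: you verify the hypotheses of Theorem~\ref{mainthm:1orb} (exponential mixing, $4$-mixing via \cite[Lemma~4.16]{Zha24}, $F_\mu=C_\mu=1$ and continuous mean scaling from the bounded density, and early returns with $D_\mu=1$), then convert the normalisation using $\int\mu(B(z,r))\,d\mu\sim 2r\int\rho^2$. The differences are cosmetic: you prove the early-return bound directly by the cylinder argument instead of citing \cite[Lemma~3.4]{HolNicTor12}, and you get the asymptotic by Lebesgue differentiation rather than Lemma~\ref{lem:acipdense}. For the Gauss map at $n=1$, the clean fix is that its branches are orientation-reversing, so $|D(Tx-x)|=|DT|+1\ge 2$.
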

So again in  the special case when $\mu=$ Lebesgue, for  $\beta\in (0, 2)$ $$\frac1{n^{2-\beta}}  \#\left\{0\le i\neq  j< n:|T^ix- T^jx|\le \frac s{n^{\beta}}\right\}\to 2s,$$
for Lebesgue-a.e.\ $x$.  We also have applications to cookie cutters and their associated equilibrium states, see Section~\ref{sec:apps}.

\begin{remark}
We note that there are other possible shrinking sequences that we could use to define our pair correlation statistics that we can cover using our techniques, but the polynomial decay is a helpful property here, and the literature is mostly concerned with sequences of the form $(s/n^\beta)_n$, so we restrict ourselves to these sequences here.
\end{remark}

We conclude our introduction by noting that there are yet finer ways in which one could examine the randomness of an orbit.  For instance, one could follow the programme of Peres and Weiss, see \cite{Wei20, AlvBecMer23} and study the point processes corresponding to orbits.

\textit{Structure of the paper:}  The rest of the paper is organised as follows. In Section \ref{sec:2orb} we prove Theorem \ref{thm:2orbgen} which implies Theorem \ref{mainthm:2orb}. Our proof of Theorem \ref{mainthm:1orb} is split across Sections \ref{sec:one orbit first part} and \ref{sec:one orbit second moment argument}. In Section \ref{sec:one orbit first part} we introduce a conditioning argument that will underpin our proof of Theorem \ref{mainthm:1orb} and obtain some useful expectation bounds. Finally in Section \ref{sec:one orbit second moment argument} we complete our proof by obtaining suitable second moment estimates. In Section \ref{sec:apps} we explain why Corollaries \ref{cor:2orbacip} and \ref{cor:1orbacip} follow from our theorems and include some additional examples. We also include in this section the details for the aforementioned example of an interval map and invariant measure with a polynomial rate of mixing for which the orbit of almost every point does not have the expected pair correlation statistics.

\textit{Notation:} For sequences $(a_n)_n$ and $(b_n)_n$ in $[0, \infty)$ we  write $a_n\lesssim b_n$ if there is $C>0$ such that $a_n\le Cb_n$ for all $n$.  The notation $a_n\gtrsim b_n$ is defined similarly.  If $a_n\lesssim b_n$ and $a_n \gtrsim b_n$, we write $a_n \asymp b_n$. We write $a_n\sim b_n$ if $\frac{a_n}{b_n} \to 1$.  For $(c_n)_n$ with  $c_n\in \R$ we also write $c_n= \mathcal{O}(b_n)$ if $|c_n|\lesssim b_n$.  Finally, if $(a_n)_n$ in $[0, \infty)$ satisfies $\lim_{n\to\infty} a_{n}=0$, then we will sometimes write $a_{n}=o(1)$.

\textit{Acknowledgements.} SB was partially funded by his EPSRC
New Investigators Award (EP/W003880/1). MT was partially funded by his EPSRC grant UKRI1120. MT also thanks the University of Loughborough for their hospitality.

\section{Proof for the two orbit case}

\label{sec:2orb}

Theorem~\ref{mainthm:2orb} follows immediately from the following more general statement. We note that after setting $\gamma$ to be any positive constant less than $2-\beta C_{\mu_1, \mu_2}$ so that \eqref{eq:r_ncond} is satisfied, to satisfy \eqref{eq:slowgrow} for some $\alpha\in (0,2\gamma)$ for $i=1$ and $i=2$, it is sufficient to satisfy that $\beta(2C_{\mu_1, \mu_2}-C_{\mu_1})$ and $\beta(2C_{\mu_1, \mu_2}-C_{\mu_2})$ are less than $2$. This is precisely the second assumption in Theorem~\ref{mainthm:2orb}.

\begin{theorem}
Suppose that  $(X, T_i, \mu_i)$ have exponential mixing for  $BV$ against $L^\infty$ for $i=1, 2$ and $\mu_1$  and $\mu_2$ have continuous mean scaling.  Let $\beta>0$.
For $\gamma\in (0, 2)$, suppose that for all $s>0$,
\begin{equation}
n^2\int\mu_1\left(B\left(x,s/n^\beta\right)\right)~d\mu_2(x) \gtrsim n^\gamma,
\label{eq:r_ncond}
\end{equation}
where the implied constant may depend on $s$.
Moreover, suppose that there is $\alpha\in (0, 2\gamma)$ such that
\begin{equation}
n^2\int\mu_i\left(B\left(x,s/n^\beta\right)\right)~d\mu_i(x) \lesssim n^\alpha
\label{eq:slowgrow}
\end{equation}
for $i=1$ and $i= 2$, where again the implied constant may depend on $s$.

Then for $\mu_{1}\times \mu_{2}$ a.e.\ $(x, y),$ for all $s>0$ we have 
$$\lim_{n\to \infty}  \frac{ \#\left\{0\le i, j<n:|T_1^ix- T_2^jy|\le \frac s{n^\beta}\right\}}{n^2\int\mu_1\left(B\left(z,s/n^\beta\right)\right)~d\mu_2(z)}= 1.$$
\label{thm:2orbgen}
\end{theorem}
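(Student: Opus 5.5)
We prove Theorem~\ref{thm:2orbgen} by a second moment argument along a sparse subsequence, followed by interpolation. Fix $s>0$ and write $r_n=s/n^\beta$,
$$S_n(x,y)=\sum_{0\le i,j<n}\1_{\{|T_1^ix-T_2^jy|\le r_n\}},\qquad E_n=n^2\int\mu_1(B(z,r_n))\,d\mu_2(z).$$
Since $\mu_1\times\mu_2$ is invariant under $T_1\times T_2$, we have $\int S_n\,d(\mu_1\times\mu_2)=E_n$ exactly. The aim is to show
$$\mathrm{Var}(S_n)\lesssim n^{\alpha}+n^{\gamma+\eta}$$
for any small $\eta>0$, up to small errors. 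By \eqref{eq:r_ncond} and $\alpha<2\gamma$, this gives
$$\frac{\mathrm{Var}(S_n)}{E_n^2}\lesssim n^{-\kappa}\quad\text{for some }\kappa>0.$$

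\textbf{Variance estimate.} Expand
$$\mathrm{Var}(S_n)=\sum_{i,j,i',j'}\Bigl(\mu_1\times\mu_2(|T_1^ix-T_2^jy|\le r_n,\ |T_1^{i'}x-T_2^{j'}y|\le r_n)-(\text{product of the two probabilities})\Bigr).$$
Condition on $y$, so that $u=T_2^jy$ and $v=T_2^{j'}y$ are fixed. The $x$-probability is then
$$\int \1_{B(u,r_n)}\cdot\1_{B(v,r_n)}\circ T_1^{|i'-i|}\,d\mu_1.$$
We split the sum according to the gaps.

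\emph{Both gaps large.} When $|i-i'|$ and $|j-j'|$ both exceed $K\log n$, exponential mixing for BV against $L^\infty$ applies. Indicators of intervals have BV norm bounded by $3$. Applying mixing first in $x$, then again in $y$ with observable $\mu_1(B(\cdot,r_n))$ (which has bounded variation uniformly in $n$), the covariance is $O(e^{-\theta K\log n})$ per term. Choosing $K$ large makes the total over $n^4$ terms $O(1)$.

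\emph{One gap small.} Suppose $|i-i'|\le K\log n$ but $|j-j'|$ is large. Mixing in $y$ replaces $T_2^{j}y$ and $T_2^{j'}y$ by independent points, up to an $O(e^{-\theta|j-j'|})$ error. The term is then bounded by
$$\int\!\!\int \mu_1(B(z,r_n))\,\mu_1(B(z',r_n))\ \text{-type quantities},$$
which reduces to $\mu_2$-averages of $\mu_1(B(\cdot,r_n))$. For the diagonal $i=i'$ this contribution is bounded by
$$n^3\int\!\!\int \1_{|x-y|\le r_n}\1_{|x-y'|\le r_n}\,d\mu_1(x)\,d\mu_2(y)\,d\mu_2(y'),$$
and by Cauchy–Schwarz and the triangle inequality this is $\lesssim n\cdot n^2\int\mu_2(B(z,2r_n))\,d\mu_2(z)$. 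Hmm, the power counting here needs care. The clean way is to bound the covariance simply by the joint probability and use
$$|T_1^ix-T_2^jy|\le r_n,\quad |T_1^{i'}x-T_2^{j'}y|\le r_n\ \Longrightarrow\ |T_2^jy-T_2^{j'}y|\le 2r_n+|T_1^ix-T_1^{i'}x|.$$
When the $i$ indices coincide, this yields $|T_2^jy-T_2^{j'}y|\le 2r_n$. The sum over $j,j'$ with $|j-j'|$ large then gives, by mixing, a quantity $\approx n\cdot n^2\int\mu_2(B(z,2r_n))\,d\mu_2$, times a probability $\lesssim r_n$-scale. I intend to organise this so the diagonal-type contributions are bounded by
$$(\log n)\,n^2\int\mu_i(B(z,2r_n))\,d\mu_i(z)\lesssim n^{\alpha+\eta},$$
which is exactly the role of \eqref{eq:slowgrow}. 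We use continuous mean scaling, or doubling from the polynomial bounds, to pass from $2r_n$ back to $r_n$ scale up to constants.

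\emph{Both gaps small.} Bound each term by the single-pair probability. This gives at most $(K\log n)^2E_n\lesssim n^{\gamma+\eta}$ times $E_n/n^{\gamma}$, which is still $o(E_n^2)$ since $E_n\gtrsim n^\gamma\to\infty$.

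I expect this variance bookkeeping to be the main obstacle: in particular, arranging that every near-diagonal block is controlled either by $\log^2 n\cdot E_n$ or by $\log n\cdot n^\alpha$, using only BV-against-$L^\infty$ mixing.

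\textbf{Borel–Cantelli and interpolation.} Take the subsequence $n_k=\lfloor k^{M}\rfloor$ with $M\kappa>1$. Chebyshev's inequality and Borel–Cantelli give $S_{n_k}/E_{n_k}\to1$ almost surely, simultaneously for all rational $s$. For $n_k\le n<n_{k+1}$ we have $n_{k+1}/n_k\to1$, so $r_{n}$ is sandwiched between $r_{n_{k+1}}$ and $r_{n_k}$. By monotonicity,
$$S_{n_k}(s')\le S_n(s)\le S_{n_{k+1}}(s'')$$
with $s'=s(n_k/n_{k+1})^\beta$ and $s''=s(n_{k+1}/n_k)^\beta$, both within $\delta$ of $s$ eventually. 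Continuous mean scaling gives that $E_{n_k}(s\pm\delta)/E_n(s)$ is within $\eps$ of $(n_k/n)^2\to1$. Squeezing, with rational $s\pm\delta$, yields the limit for every $s>0$ on a single full-measure set.
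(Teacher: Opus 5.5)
\textbf{The gap is in the half-separated block.} Your overall architecture matches the paper: a second moment bound, Borel--Cantelli along $n_k=\lfloor k^M\rfloor$, and sandwiching with rational $s$ via continuous mean scaling. The blocks where both gaps are large or both are small are also handled correctly. The problem is the block with $|i-i'|\le K\log n$ and $|j-j'|$ large (and its mirror image). This is the only place where $\alpha<2\gamma$ is needed, and you leave it unresolved.

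\emph{The triangle-inequality reduction fails.}
\begin{itemize}
\item It constrains $y$ only when $i=i'$ exactly. For $0<|i-i'|\le K\log n$ the distance $|T_1^ix-T_1^{i'}x|$ is not small.
\item Even when $i=i'$, it discards the cross structure and yields $n^3\int\mu_2(B(z,2r_n))\,d\mu_2(z)\lesssim n^{1+\alpha}$. This is not $o(E_n^2)$ in general: for Lebesgue measure and $\beta\in(1,2)$ it is $n^{3-\beta}$, against $E_n^2\asymp n^{4-2\beta}$.
\item Your extra ``probability at scale $r_n$'' would need a Frostman bound, which is not assumed. The best available bound is $\sup_z\mu_1(B(z,r_n))\lesssim n^{\alpha/2-1}$ (Lemma~\ref{lem:tight}). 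That gives $n^{3\alpha/2}$, which is $o(n^{2\gamma})$ only when $\alpha<4\gamma/3$.
\end{itemize}

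\emph{Your absolute variance target is false.} The bound $\mathrm{Var}(S_n)\lesssim n^{\alpha}+n^{\gamma+\eta}$ fails already in the i.i.d.\ model with a smooth non-constant density, and likewise for the Gauss map. There the terms with $i=i'$ and $j\neq j'$ have covariance $\mathrm{Var}_{\mu_1}\bigl(\mu_2(B(\cdot,r_n))\bigr)\asymp r_n^2$, so they contribute $\asymp n^{3-2\beta}$. The hypotheses hold with $\alpha=\gamma=2-\beta$, and $n^{3-2\beta}\gg n^{2-\beta+\eta}$ for $\beta<1$ and small $\eta$. Similarly, $(K\log n)^2E_n\lesssim n^{\gamma+\eta}$ is wrong, since $n^\gamma$ bounds $E_n$ only from below. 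Your normalised conclusion for that block is nevertheless fine.

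\textbf{The fix is to bound this block relative to $E_n$, not absolutely.} Your first instinct, mixing in $y$, was the right one; you just need to finish it correctly.
\begin{enumerate}
\item Use invariance and mixing in $y$, with BV observable $\1_{B(T_1^ix,r_n)}$ against $\1_{B(T_1^{i'}x,r_n)}\circ T_2^{j'-j}$. Up to $O(\e^{-\theta|j-j'|})$ this gives $\int\mu_2(B(T_1^ix,r_n))\,\mu_2(B(T_1^{i'}x,r_n))\,d\mu_1(x)$.
\item By Cauchy--Schwarz and $T_1$-invariance, this is at most $\int\mu_2(B(x,r_n))^2\,d\mu_1(x)$, for every pair $i,i'$.
\item Bound only \emph{one} factor pointwise: $\mu_2(B(x,r_n))\lesssim\bigl(\int\mu_2(B(z,r_n))\,d\mu_2(z)\bigr)^{1/2}\lesssim n^{\alpha/2-1}$ (Lemma~\ref{lem:tight}, or directly $\mu_2(B(y,r))^2\le\int\mu_2(B(x,2r))\,d\mu_2(x)$ together with \eqref{eq:slowgrow} at $2s$).
\item Keep the other factor. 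By Fubini its $\mu_1$-integral is exactly $E_n/n^2$.
\item Summing over the $\lesssim n^3\ell(n)$ quadruples gives a contribution $\lesssim\ell(n)\,n^{\alpha/2}E_n$. Dividing by $E_n^2$ gives $\lesssim\ell(n)n^{\alpha/2-\gamma}$, which decays precisely because $\alpha<2\gamma$.
\end{enumerate}
The mirror block is handled symmetrically by mixing in $x$. Combined with $\ell(n)^2/E_n\lesssim\ell(n)^2n^{-\gamma}$ for the fully non-separated block, this yields $\E(S_n/E_n-1)^2\lesssim n^{-\kappa}$, and the rest of your argument then goes through.
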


The following lemma clarifies how \eqref{eq:r_ncond}, \eqref{eq:slowgrow} and $C_{\mu_1, \mu_2}$ are related.

\begin{lemma} \begin{enumerate}
\item[(a)]
If $\overline{C}_{\mu_1, \mu_2}<\infty$  then \eqref{eq:r_ncond} is satisfied for $s, \beta, \gamma$ whenever $\beta< \frac{2-\gamma}{\overline{C}_{\mu_1, \mu_2}}$.

\item[(b)]
If $\underline{C}_{\mu_1, \mu_2}>0$ and  \eqref{eq:r_ncond} is satisfied for $s, \beta, \gamma$, then $\beta< \frac{2-\gamma}{\underline{C}_{\mu_1, \mu_2}}$.
\item[(c)]
If $\underline{C}_{\mu}>0$ then  \eqref{eq:slowgrow} is satisfied for $s, \beta, \alpha$ whenever  $\beta> \frac{2-\alpha}{\underline{C}_{\mu}}$.
\end{enumerate}
\label{lem:Cdim}
\end{lemma}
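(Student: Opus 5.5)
Write $I(r):=\int\mu_1(B(x,r))\,d\mu_2(x)$ and $r_n:=s/n^\beta$. All three parts rest on one identity: \eqref{eq:r_ncond} and \eqref{eq:slowgrow} are polynomial bounds on $I(r_n)$ (resp.\ $\int\mu_i(B(x,r_n))\,d\mu_i$). Taking logarithms and dividing by $\log r_n=\log s-\beta\log n$ turns them into bounds on $\log I(r_n)/\log r_n$, which is the quantity whose $\limsup$/$\liminf$ defines $\overline C_{\mu_1,\mu_2}$, $\underline C_{\mu_1,\mu_2}$ and $\underline C_\mu$. Since $\log r_n\sim-\beta\log n$ and $r_n\searrow0$, the plan is to compare exponents of $n$ in each case.

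\emph{(a)} Fix $\eta>0$ so small that $\beta(\overline C_{\mu_1,\mu_2}+\eta)<2-\gamma$, which is possible because the inequality on $\beta$ is strict. By the definition of $\limsup$, for all small $r$ we have $I(r)\ge r^{\overline C_{\mu_1,\mu_2}+\eta}$. Hence for large $n$,
\[
n^2I(r_n)\ge s^{\overline C_{\mu_1,\mu_2}+\eta}\,n^{2-\beta(\overline C_{\mu_1,\mu_2}+\eta)}\ge s^{\overline C_{\mu_1,\mu_2}+\eta}\,n^{\gamma}.
\]
Adjusting the constant to cover finitely many $n$ gives \eqref{eq:r_ncond}.

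\emph{(c)} This is the mirror image. Pick $\eta>0$ with $\beta(\underline C_\mu-\eta)>2-\alpha$. Then $\int\mu(B(x,r))\,d\mu\le r^{\underline C_\mu-\eta}$ for small $r$, so
\[
n^2\int\mu(B(x,r_n))\,d\mu\le s^{\underline C_\mu-\eta}\,n^{2-\beta(\underline C_\mu-\eta)}\lesssim n^\alpha,
\]
which is \eqref{eq:slowgrow}.

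\emph{(b)} From \eqref{eq:r_ncond}, $I(r_n)\ge c\,n^{\gamma-2}$ for all $n$. Dividing the logarithm by the negative quantity $\log r_n$ reverses the inequality:
\[
\frac{\log I(r_n)}{\log r_n}\le\frac{\log c+(\gamma-2)\log n}{\log s-\beta\log n}\longrightarrow\frac{2-\gamma}{\beta}.
\]
To pass from the sequence $r_n$ to all $r\searrow0$, use the monotonicity of $I$ together with $r_{n+1}/r_n\to1$. For $r_{n+1}\le r\le r_n$ we get $I(r)\ge I(r_{n+1})$ and $\log r\sim\log r_{n+1}$, so $\underline C_{\mu_1,\mu_2}\le\liminf_{r\to 0}\log I(r)/\log r\le(2-\gamma)/\beta$. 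Since $\underline C_{\mu_1,\mu_2}>0$, this rearranges to $\beta\le(2-\gamma)/\underline C_{\mu_1,\mu_2}$.

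The main obstacle is upgrading this to the strict inequality as stated. The only slack available is the hypothesis that \eqref{eq:r_ncond} holds for \emph{every} $s>0$, with constants allowed to depend on $s$. The first thing I would try is to exploit that uniformity in $s$ to push the exponent past the boundary. I should flag, however, that I expect this to fail in general. If $I(r)=r^{C}$ exactly and $\beta=(2-\gamma)/C$, then $n^2I(s/n^\beta)=s^C n^\gamma$ for every $s$, so \eqref{eq:r_ncond} holds at the endpoint. Hence the strict form seems to need an additional hypothesis, for instance that the defining $\liminf$ is approached with a strictly sub-power error, and I would check whether such a hypothesis is available. Failing that, I would record the non-strict bound as what the argument delivers. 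It is harmless downstream: Theorem~\ref{mainthm:2orb} only uses (a) and (c), with open ranges of $\beta$.
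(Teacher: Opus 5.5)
Your arguments for (a) and (c) are the paper's: take logarithms, use the $\limsup$ (resp.\ $\liminf$) with an $\eps$-margin supplied by the strict inequality on $\beta$, and compare powers of $n$. Keeping a general $s$, rather than setting $s=1$ as the paper does, is if anything cleaner.

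For (b), the non-strict conclusion $\beta\le(2-\gamma)/\underline{C}_{\mu_1,\mu_2}$ is also all that the paper's proof actually establishes. It reaches $n^{\gamma}\lesssim n^{2-\beta(\underline{C}_{\mu_1,\mu_2}-\eps)}$, i.e.\ $\beta\le(2-\gamma)/(\underline{C}_{\mu_1,\mu_2}-\eps)$ for every $\eps>0$. It then asserts the strict inequality, which does not follow on letting $\eps\to0$.

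Your endpoint objection is correct, and it is realised concretely by taking $\mu_1=\mu_2$ to be Lebesgue measure on $[0,1]$. There $\int\mu_1(B(x,r))\,d\mu_2(x)=2r-r^2$ for $r\le 1/2$, so $\underline{C}_{\mu_1,\mu_2}=1$. With $\beta=2-\gamma$ one has $n^2\int\mu_1(B(x,s/n^\beta))\,d\mu_2(x)=2sn^{\gamma}-s^2n^{\gamma-\beta}$ for large $n$. Hence \eqref{eq:r_ncond} holds for every $s>0$, although $\beta=(2-\gamma)/\underline{C}_{\mu_1,\mu_2}$.

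So (b) is true only with $\le$. This is not a gap in your proposal, and, as you note, (b) is not used in the proof of Theorem~\ref{mainthm:2orb}.

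A small simplification: the interpolation between $r_{n+1}$ and $r_n$ in (b) is unnecessary. The $\liminf$ as $r\searrow0$ is automatically at most the $\liminf$ along the particular sequence $r_n\to0$.
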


\begin{proof}
 For (a) it is sufficient to set $s=1$.  Then 
for $\eps>0$ and $n$ sufficiently large,
$$\frac{\log \int\mu_1(B(x, 1/n^\beta))~d\mu_2(x)}{-\beta\log n}< \overline{C}_{\mu_1, \mu_2}+\eps.$$ 
So 
$$n^2\int\mu_1(B(x, 1/n^\beta))~d\mu_2(x) > n^{2-\beta\left(\overline{C}_{\mu_1, \mu_2}+\eps\right)}.$$
Therefore if $\beta<\frac{2-\gamma}{\overline{C}_{\mu_1, \mu_2}+\eps}$, we are finished, so we conclude by noting that $\eps>0$ was arbitrary.

For (b), for $\eps>0$ and sufficiently large $n$,  $\frac{\log \int\mu_1(B(x, s/n^\beta))~d\mu_2(x)}{\log(s/n^\beta)}\ge \underline{C}_{\mu_1, \mu_2}-\eps$ and \eqref{eq:r_ncond} imply 
$$n^\gamma \lesssim n^2\int\mu_1(B(x, s/n^\beta))~d\mu_2(x) \le n^2\left(\frac s{n^\beta}\right)^{\underline{C}_{\mu_1, \mu_2}-\eps},$$
so $n^{-\beta}\gtrsim n^{\frac{\gamma -2}{\underline{C}_{\mu_1, \mu_2}-\eps}}$ for all $\eps>0$, so $\beta< \frac{2-\gamma}{\underline{C}_{\mu_1, \mu_2}}$  as required.

Part (c) follows similarly.
\end{proof}

We will require the following straightforward lemmas, see Lemmas 3.2 and 3.3 of \cite{KirKunPerTod25}.

\begin{lemma}
  Suppose that $(X,\mu_1)$, $(X,\mu_2)$ are Borel probability spaces with $X\subset \R^n$. Then there exists $K>0$ such that
  if $r>0$ is sufficiently small then for $y\in X$,
  \[
    \mu_i(B(y, r))\le K \biggl(\int\mu_i(B(x,
      r)) \, d\mu_i(x) \biggr)^{\frac12}\quad \text{for } i=1,2.
  \]
  Hence also
    \[
    \int\mu_i(B(x, r))^2 \, d\mu_j(x) \le K \biggl(\int\mu_i(B(x,
      r)) \, d\mu_i(x)\biggr)^{\frac12}\int\mu_i(B(x,
    r)) \, d\mu_j(x).
  \]
  \label{lem:tight}
  for $(i,j)=(1,2), (2,1)$.
\end{lemma}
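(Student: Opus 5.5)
The plan is to prove the first inequality by a direct covering argument that works for every $r>0$, not just small $r$, with $K$ depending only on the ambient dimension. The second inequality then follows by integrating.

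\textbf{Step 1: a lower bound for the correlation integral.} Fix $i$, $r>0$ and a point $z$. For every $x\in B(z,r/2)$ we have $B(z,r/2)\subset B(x,r)$, by the triangle inequality. Hence
$$\int\mu_i(B(x,r))\,d\mu_i(x)\ \ge\ \int_{B(z,r/2)}\mu_i(B(x,r))\,d\mu_i(x)\ \ge\ \mu_i(B(z,r/2))^2 .$$
That is, $\mu_i(B(z,r/2))\le \bigl(\int\mu_i(B(x,r))\,d\mu_i(x)\bigr)^{1/2}$ for every centre $z$.

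\textbf{Step 2: covering.} Let $N=N(n)$ be a constant such that every ball of radius $r$ in $\R^n$ is covered by at most $N$ balls of radius $r/2$. Such an $N$ exists by scaling and compactness; on the interval one can take $N=3$, or $N=2$ with closed balls. Given $y\in X$, cover $B(y,r)$ by balls $B(z_k,r/2)$ for $k=1,\dots,N$. Using subadditivity and Step 1,
$$\mu_i(B(y,r))\le\sum_{k=1}^N\mu_i(B(z_k,r/2))\le N\Bigl(\int\mu_i(B(x,r))\,d\mu_i(x)\Bigr)^{1/2}.$$
This gives the first claim with $K=N$.

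\textbf{Step 3: the integrated bound.} For $(i,j)=(1,2)$ or $(2,1)$, write $\mu_i(B(x,r))^2=\mu_i(B(x,r))\cdot\mu_i(B(x,r))$. Bound the first factor by the uniform estimate from Step 2, which holds for every point, not only points of $X$. Then integrate the second factor against $\mu_j$. This yields
$$\int\mu_i(B(x,r))^2\,d\mu_j(x)\le K\Bigl(\int\mu_i(B(x,r))\,d\mu_i(x)\Bigr)^{1/2}\int\mu_i(B(x,r))\,d\mu_j(x).$$

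There is no real obstacle here. The only point needing care is that the sup bound must hold at every point $x$ in the support of $\mu_j$, not just in $\mathrm{supp}(\mu_i)$. The covering argument gives this for free, because the centres $z_k$ are arbitrary and need not lie in any support.
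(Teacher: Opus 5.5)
Your proof is correct. The paper does not prove this lemma itself; it only refers to Lemmas 3.2 and 3.3 of \cite{KirKunPerTod25}, so there is no in-paper argument to compare against, and your covering argument is a standard, self-contained substitute.

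Both ingredients check out. The inclusion $B(z,r/2)\subset B(x,r)$ for $x\in B(z,r/2)$ gives $\int\mu_i(B(x,r))\,d\mu_i(x)\ge\mu_i(B(z,r/2))^2$ for every centre $z$. The doubling (covering) property of $\R^n$ then transfers this to balls of radius $r$. Both steps work verbatim for closed balls, which are the ones the paper uses via $\le s/n^{\beta}$.

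Your argument gives slightly more than the statement asks. The constant $K=N(n)$ depends only on the ambient dimension, and the bound holds for every $r>0$ and every centre in $\R^n$, so the restriction to sufficiently small $r$ is unnecessary. This uniformity also makes Step 3 immediate. In the lemma both measures live on $X$, however, so the bound for $y\in X$ would already have been enough there.
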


\begin{lemma}
  Let $(X,\mu)$ denote a Borel probability space where $X\subset
  \R$. For any $r>0$, $\psi_r \colon X \to \R$ given by
  $y\mapsto \mu(B(y,r))$ is a function of bounded variation with total variation bounded
  above by 2.
  \label{lem:BV}
\end{lemma}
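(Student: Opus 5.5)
The statement is Lemma~\ref{lem:BV}: for $r>0$, the function $\psi_r(y)=\mu(B(y,r))$ on $X\subset\R$ has total variation at most $2$. The plan is to write $\psi_r$ as a difference of two monotone functions taking values in $[0,1]$ and then add their variations.

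First, write $B(y,r)=[y-r,y+r]$; the argument is identical for the open ball, or if $X$ is replaced by its intersection with the interval. Let $F(t)=\mu((-\infty,t])$ be the distribution function and $G(t)=\mu((-\infty,t))$ its left-continuous version. Then
$$\psi_r(y)=\mu([y-r,y+r])=F(y+r)-G(y-r).$$
Both $y\mapsto F(y+r)$ and $y\mapsto G(y-r)$ are non-decreasing and take values in $[0,1]$. A non-decreasing function with values in $[0,1]$ has total variation at most $1$ on any subset of $\R$, since for any partition $y_0<\dots<y_k$ the sum of increments telescopes and is bounded by $1$.

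Next, fix a partition $y_0<y_1<\dots<y_k$ of points in $X$. By the triangle inequality,
$$\sum_{l}|\psi_r(y_l)-\psi_r(y_{l-1})|\le \sum_l \bigl(F(y_l+r)-F(y_{l-1}+r)\bigr)+\sum_l\bigl(G(y_l-r)-G(y_{l-1}-r)\bigr)\le 1+1=2.$$
Taking the supremum over all partitions gives $|\psi_r|_{Var}\le 2$. Since also $0\le\psi_r\le1$, we get $\psi_r\in BV$ with $\|\psi_r\|_{BV}\le 3$.

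There is no real obstacle here. The only point needing care is the endpoint convention: using $G$ for the left endpoint makes the decomposition exact for the closed ball, and for the open ball one uses $G(y+r)-F(y-r)$ instead.
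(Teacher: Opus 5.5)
Your proof is correct. Writing $\psi_r(y)=F(y+r)-G(y-r)$ (or $G(y+r)-F(y-r)$ for open balls) as a difference of two non-decreasing $[0,1]$-valued functions immediately gives total variation at most $1+1=2$ on any subset of $\R$. The paper does not prove this lemma itself; it defers to Lemmas 3.2 and 3.3 of \cite{KirKunPerTod25}. Your argument is the standard one: the mass entering at the right end of the moving ball and the mass leaving at its left end each total at most $\mu(X)=1$.
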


\begin{remark}
We observe that the first part of Lemma~\ref{lem:tight} implies that if $C_{\mu_1}, C_{\mu_2}$ and $C_{\mu_1, \mu_2}$ exist and lie in $(0, \infty)$, as in Theorem~\ref{mainthm:2orb},  then $C_{\text{max}}\ge 0$. Recall that $C_{\text{max}}=\max_{i=1, 2}\left\{2C_{\mu_1, \mu_2}-C_{\mu_i}\right\}.$  \end{remark}

It will be useful in our proofs to use the following function.

\begin{definition} Let $\ell:\N\to \R^+$ be a function satisfying the conditions:
\begin{itemize}
\item for any $\delta>0$, $\frac{\ell(n)}{n^\delta} \to 0$ as $n\to\infty$;
\item for any $p>0$ and $\delta>0$, $n^p \e^{-\delta\ell(n)} \to 0$ as $n\to\infty$.
\end{itemize}
\label{def:ell}
\end{definition}

So for concreteness, we can choose $\ell(n) = \log n \log\log n$.

 \begin{proof}[Proof of Theorem~\ref{thm:2orbgen}] 
  Suppose, without loss of
  generality, that the exponential mixing constants are the same
  for both systems. 
  
Let us fix $\beta>0,$ $\gamma\in (0,2)$ and $\alpha\in (0,2\gamma)$ so that our assumptions are satisfied. For $s>0$, let $r_n=s/n^\beta$.  Then for $x, y\in X$,
  define
  \[
    S_n(x, y):= \sum_{i, j\in [0, n)} \1_{B(T_2^jy,
      r_{n})}(T_1^ix).
  \]
Our proof depends on showing that the following second moment bound is satisfied: Let $\hat \gamma:= \gamma-\frac\alpha2>0$ (see \eqref{eq:slowgrow}),  then for all $s>0$ there exists $C>0$ such that
\begin{equation}
\E \left(\frac{S_n}{\E(S_n)}- 1\right)^2 <\frac{C}{n^{\frac{\hat \gamma}2}}.
\label{eq:mainest}
\end{equation}
for all $n\in \N$. We will now explain why our result follows from \eqref{eq:mainest} before returning to its proof. Using Markov's inequality, we deduce
  \begin{align*}
(\mu_1\times \mu_2)\left(\left|\frac{S_n}{\E(S_n)}- 1\right|\geq \frac1{n^{\frac{\hat \gamma}8}}\right)&=  (\mu_1\times \mu_2)\left(\left(\frac{ S_n}{\E(S_n)}- 1\right)^2\geq \frac1{n^{\frac{\hat \gamma}4}}\right)\\
&\le n^{\frac{\hat \gamma}4}\E \left(\frac{ S_n}{\E(S_n)}- 1\right)^2\le \frac{C}{n^{\frac{\hat \gamma}4}}.
  \end{align*}

So taking a subsequence $(n^K)_n$ where $K>4/{\hat \gamma}$,  we see that 
$$(\mu_1\times \mu_2)\left(\left|\frac{S_{n^K}}{\E(S_{n^K})}- 1\right|\geq \frac1{n^{K\frac{\hat \gamma}8}}\right)
\le \frac{C}{n^{\frac{K{\hat \gamma}}4}},$$
which is summable. 
 Hence, by the Borel-Cantelli lemma for $\mu_{1}\times \mu_{2}$ a.e. $(x, y)$, 
$$\frac{S_{n^K}(x,y)}{\E(S_{n^K})}\to 1,$$
i.e., 
\begin{equation}
\lim_{n\to \infty}\frac{ \#\left\{0\le i, j< n^K:|T_1^i x- T_2^jy|\le \frac s{n^{\beta K}}\right\}}{n^{2K}\int\mu_1\left(B\left(z,s/ n^{K\beta}\right)\right)~d\mu_2(z)}   = 1.
\label{eq:polyspeed}
\end{equation}

  This establishes our desired convergence along a subsequence for a fixed choice of $s$. It remains to establish this convergence along the integers and for arbitrary $s>0$. 

Recalling that our sequence of radii are $(s/n^\beta)_n$ where $s>0$ was arbitrary, it follows from the above and the fact that a countable union of sets of measure zero has measure zero, that for $\mu_1\times\mu_2$ a.e. $(x, y)$, for any $s\in \mathbb{Q}_{>0}$ we have 
\begin{equation}
	\label{eq:rational limit2orb}
\lim_{n\to\infty}\frac{\#\left\{0\leq i, j\leq n^{K}:|T_1^ix-T_2^jy|\leq \frac{s}{n^{K\beta}}\right\}}{n^{2K}\int \mu(B(z,s/n^{K\beta}))~d\mu(z)}=1.
\end{equation} 
We emphasise that here we are using the fact that $\hat\gamma$ and $K$ are independent of $s$.

We will now explain why for $\mu_1\times\mu_2$ a.e $(x, y)$ for any $s>0$ we have 
$$\lim_{n\to\infty}\frac{\#\left\{0\leq i, j\leq n:|T_1^ix-T_2^jy|\leq \frac s{n^\beta}\right\}}{n^{2}\int \mu(B(z,s/n^\beta))~d\mu(z)}=1.$$ Let us fix $(x, y)$ belonging to the full measure set for which  \eqref{eq:rational limit2orb} holds for every $s\in \mathbb{Q}_{>0}$. For $n\in \N$ let $m\in \N$ be such that $m^{K}\leq n<(m+1)^{K},$ then for this choice of $x, y$ we have 
\begin{align*}
&\limsup_{n\to\infty}\frac{\#\{0\leq i,  j\leq n:|T_1^ix-T_2^jy|\leq \frac s{n^\beta}\}}{n^{2}\int \mu(B(z,s/n^\beta))~d\mu(z)}\\
&\leq \limsup_{n\to\infty}\frac{\#\{0\leq i,  j\leq (m+1)^K: |T_1^{i}x-T_2^{j}y|\leq \frac s{n^\beta}\}}{m^{2K}\int \mu(B(z,s/(m+1)^{K\beta}))~d\mu(z)}\\
&= \limsup_{n\to\infty}\frac{\#\{0\leq i, j\leq (m+1)^K:|T_1^{i}x-T_2^{j}y|\leq \frac s{(m+1)^{K\beta}}\frac{(m+1)^{K\beta}}{n^\beta}\}}{m^{2K}\int \mu(B(z,s/(m+1)^{K\beta}))~d\mu(z)}\\
&\leq\inf_{s'\in \mathbb{Q}:s'>s} \limsup_{n\to\infty}\frac{\#\{0\leq i,  j\leq (m+1)^K:|T_1^{i}x-T_2^{j}y|\leq \frac{s'}{(m+1)^{K\beta}}\}}{m^{2K}\int \mu(B(z,s/(m+1)^{K\beta}))~d\mu(z)}\\
&\leq\inf_{s'\in \mathbb{Q}:s'>s} \limsup_{n\to\infty}\frac{\#\{0\leq i,  j\leq (m+1)^K:|T_1^{i}x-T_2^{j}y|\leq \frac{s'}{(m+1)^{K\beta}}\}}{(m+1)^{2K}\int \mu(B(z,s'/(m+1)^{K\beta}))~d\mu(z)}\\
&\hspace{4cm}\times \frac{\int \mu(B(z,s'/(m+1)^{K\beta}))~d\mu(z)}{\int \mu(B(z,s/(m+1)^{K\beta}))~d\mu(z)}\times \frac{(m+1)^{2K}}{m^{2K}}\\
&=1 \times \inf_{s'\in\mathbb{Q}:s'>s}\limsup_{n\to\infty}\frac{\int \mu(B(z,s'/(m+1)^{K\beta}))~d\mu(z)}{\int \mu(B(z,s/(m+1)^{K\beta}))~d\mu(z)}=1. 
\end{align*}

In the final line we have used our assumption that $\mu_1$ and $\mu_2$  have continuous mean scaling.  It can similarly be shown that 
\begin{align*}
	&\liminf_{n\to\infty}\frac{\#\{0\leq i,  j\leq n:|T^ix-T^jy|\leq \frac s{n^\beta}\}}{n^{2}\int \mu(B(z,s/n^\beta))~d\mu(z)}\\
	&\geq \liminf_{n\to\infty}\frac{\#\{0\leq i\neq j\leq m^K:|T^{i}x-T^{j}y|\leq \frac s{n^\beta}\}}{(m+1)^{2K}\int \mu(B(z,s/m^{K\beta}))~d\mu(z)}\\
	&\geq \liminf_{n\to\infty}\frac{\#\{0\leq i\neq  j\leq m^K:|T^{i}x-T^{j}y|\leq \frac{s}{m^{K\beta}}\frac{m^{K\beta}}{n^\beta}\}}{(m+1)^{2K}\int \mu(B(z,s/m^{K\beta}))~d\mu(z)}\\
	&\geq\sup_{s'\in \mathbb{Q}:s'<s} \liminf_{n\to\infty}\frac{\#\{0\leq i\neq  j\leq m^K:|T^{i}x-T^{j}y|\leq \frac{s'}{m^{K\beta}}\}}{(m+1)^{2K}\int \mu(B(z,s/m^{K\beta}))~d\mu(z)}\\
	&\geq\sup_{s'\in \mathbb{Q}:s'<s} \liminf_{n\to\infty}\frac{\#\{0\leq i\neq  j\leq m^K:|T^{i}x-T^{j}y|\leq \frac{s'}{m^{K\beta}}\}}{m^{2K}\int \mu(B(z,s/m^{K\beta}))~d\mu(z)}\\
	&\hspace{5cm}\times \frac{\int \mu(B(z,s'/m^{K\beta}))~d\mu(z)}{\int \mu(B(z,s/m^{K\beta}))~d\mu(z)}\times \frac{m^{2K}}{(m+1)^{2K}}\\
	&=1 \times \sup_{s'\in\mathbb{Q}:s'<s}\liminf_{n\to\infty}\frac{\int \mu(B(z,s'/m^{K\beta}))~d\mu(z)}{\int \mu(B(z,s/m^{K\beta}))~d\mu(z)}=1. 
\end{align*} Where in the final line we used that $\mu_1$ and $\mu_2$  have continuous mean scaling. Thus $(x, y)$ satisfies 
$$\lim_{n\to\infty}\frac{\#\{0\leq i\neq  j\leq n:|T^ix-T^jy|\leq \frac s{n^\beta}\}}{n^{2}\int \mu(B(z,s/n^\beta))~d\mu(z)}=1$$ for any $s>0$. This completes our proof up to verifying \eqref{eq:mainest}.

Let us fix $s>0$ and define $r_n = s/n^\beta$. We now set out to prove \eqref{eq:mainest} for this choice of $s$. 
Using $T_1$-invariance of $\mu_1$ and $T_2$-invariance of $\mu_2$ we first compute
\begin{equation}
	\label{eq:Expectation lower bound two orbit}
\E(S_n)= n^{2}\int\mu_1(B(y, r_{n})) \, d\mu_2(y)\gtrsim n^\gamma. 
\end{equation}
  
Next, note that the left hand side of \eqref{eq:mainest} is
equal to $\frac{\E(S_n^2)-\E(S_n)^2}{\E(S_n)^2}$. Since we already have an expression for $\E(S_n)$ we proceed to
estimate $\E(S_n^2)$. We have
  \[
    \E(S_n^2) = \sum_{i_1, i_2, j_1, j_2 \in [0, {n})}
    \int\int\1_{B(T_2^{j_1}y, r_{n})} (T_1^{i_1}x)\cdot
    \1_{B(T_2^{j_2}y, r_{n})} (T_1^{i_2}x) \,
    d\mu_1(x)d\mu_2(y).
  \]
  In what follows we will often assume that $i_1\le i_2$ and $j_1\le j_2$ since for the three other combinations of inequalities, the upcoming arguments will give the same bound.  Similarly to the proof of  \cite[Theorem 3]{BarLiaRou19}, we will
  split the summation above according to the subcases corresponding to the following inequalities: $|i_2-i_1|\le \ell(n)$,
  $|i_2-i_1|> \ell(n)$, $|j_2-j_1|\le \ell(n)$, $|j_2-j_1|> \ell(n)$, where we recall $\ell$ from Definition~\ref{def:ell}.  
    
  For $i_2-i_1> \ell(n)$ and $j_2-j_1> \ell(n)$, i.e.\ \emph{the totally separated case} (the cases $i_1-i_2> \ell(n)$ and/or $j_1-j_2> \ell(n)$ follow similarly), the following holds 
      \begin{align*}
    & \sum_{\substack{i_1, i_2\in [0, {n})\\ i_2-i_1> \ell(n)}} \
    \sum_{\substack{j_1, j_2\in [0, {n})\\  j_2-j_1> \ell(n)}}
    \int\int\1_{B(T_2^{j_1}y, r_{n})} (T_1^{i_1}x)
    \1_{B(T_2^{j_2}y, r_{n})}(T_1^{i_2}x) \,
    d\mu_1(x)d\mu_2(y)\\ 
    &= \sum_{\substack{i_1, i_2\in [0, {n})\\ i_2-i_1> \ell(n)}} \
    \sum_{\substack{j_1, j_2\in [0, {n})\\  j_2-j_1> \ell(n)}}
    \int\int\1_{B(T_2^{j_1}y, r_{n})} (x)
    \1_{B(T_2^{j_2}y, r_{n})}(T_1^{i_2-i_1}x) \,
    d\mu_1(x)d\mu_2(y)\\ 
    &  \le \sum_{\substack{i_1, i_2\in [0,{n})\\ i_2-i_1>
    \ell(n)}} \ \sum_{\substack{j_1, j_2\in [0, {n})\\   j_2-j_1>
    \ell(n)}}  \biggl( \int\mu_1 (B(y, r_{n}))
    \mu_1 (B(T_2^{j_2-j_1}y, r_{n})) \, d\mu_2(y)  \\
    & \hspace{10cm} + \mathcal{O}\left( \e^{-\theta\ell(n)}\right)\biggr)\\
    &   \le \sum_{\substack{i_1, i_2\in [0, {n})\\
    i_2-i_1> \ell(n)}} \ \sum_{\substack{j_1, j_2\in [0, {n})\\
    j_2-j_1> \ell(n)}}  \biggl( \biggl(\int\mu_1 (B(y, r_{n}))
    \, d\mu_2(y) \biggr)^2 + \mathcal{O}\left( \e^{-\theta\ell(n)}\right)\biggr)\\
    & \leq \frac{n^{4}}4\biggl( \biggl(\int\mu_1 (B(y,
      r_{n})) \, d\mu_2(y)\biggr)^2+\mathcal{O}\left( \e^{-\theta\ell(n)}\right)\biggr)\\
      & = \frac{\E( S_n)^2}4+  \mathcal{O}\left(n^{4}\e^{-\theta\ell(n)}\right) ,
  \end{align*}
  where the first two inequalities follow from our exponential mixing assumptions, in the second inequality we have also used Lemma~\ref{lem:BV}. Applying the same argument to the other three cases then gives us a bound of 
  \begin{align}
  	\label{eq:First part of variance bound}
  	&\sum_{\substack{i_1, i_2\in [0, {n})\\ |i_2-i_1|> \ell(n)}} \
  	\sum_{\substack{j_1, j_2\in [0, {n})\\  |j_2-j_1|> \ell(n)}}
  	\int\int\1_{B(T_2^{j_1}y, r_{n})} (T_1^{i_1}x)
  	\1_{B(T_2^{j_2}y, r_{n})}(T_1^{i_2}x) \,
  	d\mu_1(x)d\mu_2(y)\nonumber\\
  	&\hspace{6cm}\leq \E( S_n)^2 + \mathcal{O}\left(n^{4} \e^{-\theta\ell(n)}\right).
  \end{align}
  We emphasise that the second term on the right hand side of \eqref{eq:First part of variance bound} decays faster than any polynomial by our choice of function $\ell$ (recall Definition~\ref{def:ell}).

  For $0\leq i_2-i_1\le  \ell(n)$ and $0\leq j_2-j_1\le \ell(n)$, i.e.\ \emph{the totally non-separated case}, we have  \begin{align*}
    & \sum_{\substack{i_1, i_2\in [0, {n})\\ 0\le i_2-i_1\le \ell(n)}}
    \ \sum_{\substack{j_1, j_2\in [0, {n})\\  0\le j_2-j_1\le \ell(n)}} 
    \int\int\1_{B(T_2^{j_1}y, r_{n})} (T_1^{i_1}x) \cdot
    \1_{B(T_2^{j_2}y, r_{n})} (T_1^{i_2}x) \,
    d\mu_1(x)d\mu_2(y)\\
    &\hspace{2.5cm} \le  \sum_{\substack{i_1, i_2\in [0, {n})\\
    0\le i_2-i_1\le \ell(n)}} \ \sum_{\substack{j_1, j_2\in [0,
    {n})\\
    0\le j_2-j_1\le \ell(n)}} \int \mu_1 (B(T_2^{j_1}y, r_{n}))
    \, d\mu_2(y)\\
    &\hspace{2.5cm} \le \quad \ell(n)^2 n^{2}\int \mu_1 (B(y,
      r_{n})) \, d\mu_2(y)=  \ell(n)^2 \E( S_n). 
  \end{align*} Again the cases where $0\leq i_1-i_2\le  \ell(n)$ and/or $0\leq j_1-j_2\le  \ell(n)$ follow similarly, and together yield
  \begin{align}
  	\label{eq:Second part of variance bound}
  	&\sum_{\substack{i_1, i_2\in [0, {n})\\ 0\le |i_2-i_1|\le \ell(n)}}
  	\ \sum_{\substack{j_1, j_2\in [0, {n})\\  0\le |j_2-j_1|\le \ell(n)}} 
  	\int\int\1_{B(T_2^{j_1}y, r_{n})} (T_1^{i_1}x) \cdot
  	\1_{B(T_2^{j_2}y, r_{n})} (T_1^{i_2}x) \,
  	d\mu_1(x)d\mu_2(y)\nonumber\\
  	&\hspace{8cm} \lesssim \ell(n)^{2}\E(S_n).
  \end{align}

  For $i_2-i_1> \ell(n)$ and $0\leq j_2-j_1\le \ell(n)$, i.e.\ \emph{a half-separated case}, we have
  \begin{align*}
    & \sum_{\substack{i_1, i_2\in [0, {n})\\ i_2-i_1> \ell(n)}} \
    \sum_{\substack{j_1, j_2\in [0, {n})\\  0\le j_2-j_1\le
    \ell(n)}} \int \int \1_{B(T_2^{j_1}y, r_{n})}
    (T_1^{i_1}x)\cdot \1_{B(T_2^{j_2}y, r_{n})}
    (T_1^{i_2}x) \, d\mu_1(x)d\mu_2(y)\\
    & \quad = \sum_{\substack{i_1, i_2, j_1, j_2 \in [0, n)\\
    i_2-i_1> \ell(n)\\ 0\le j_2-j_1\le \ell(n)}} \int\int\1_{B(T_2^{j_1}y,
    r_{n})}(x)\cdot \1_{B(T_2^{j_2}y,
    r_{n})}(T_1^{i_2-i_1}x) \, d\mu_1(x)d\mu_2(y)\\ 
    & \quad \le \sum_{\substack{i_1, i_2\in [0, n)\\ i_2-i_1>
    \ell(n)}} \ \sum_{\substack{j_1, j_2\in [0, n)\\  0\le
    j_2-j_1\le \ell(n)}}  \biggl( \int\mu_1 (B(T_2^{j_1}y,
    r_{n})) \mu_1 (B(T_2^{j_2}y,
    r_{n})) \, d\mu_2(y) + \\
    & \hspace{10.5cm} + \mathcal{O}\left( \e^{-\theta\ell(n)}\right) \biggr)\\
    &\quad \le   \sum_{\substack{i_1, i_2\in [0, n)\\ i_2-i_1>
    \ell(n)}}  \ \sum_{\substack{j_1, j_2\in [0, n)\\  0\le
    j_2-j_1\le \ell(n)}} \biggl(\int \mu_1 (B (y, r_{n}))^2 \,
    d\mu_2(y) + \mathcal{O}\left( \e^{-\theta\ell(n)}\right)\biggr)\\ 
&\quad \le n^{3}\ell(n) \biggl(\int \mu_1 (B(y,
r_{n}))^2 \, d\mu_2(y) + \mathcal{O}\left( \e^{-\theta\ell(n)}\right) \biggr),  
  \end{align*} where in the penultimate line we have used the Cauchy-Schwarz inequality and
  $T_2$-invariance of $\mu_2$. In the case where $0\leq i_2-i_1\leq  \ell(n)$ and $j_2-j_1> \ell(n),$ by an analogous argument we obtain:
 \begin{align*} 
 	&\sum_{\substack{i_1, i_2\in [0, {n})\\ 0\leq i_2-i_1\leq \ell(n)}} \
  \sum_{\substack{j_1, j_2\in [0, {n})\\  j_2-j_1>
  		\ell(n)}} \int \int \1_{B(T_2^{j_1}y, r_{n})}
  (T_1^{i_1}x)\cdot \1_{B(T_2^{j_2}y, r_{n})}\\
  &\hspace{3cm}\le n^{3}\ell(n) \biggl(\int \mu_2 (B(y,
  r_{n}))^2 \, d\mu_1(y) + \mathcal{O}\left( \e^{-\theta\ell(n)}\right) \biggr).
  \end{align*} Similar estimates can be obtained in the half separated case when $i_{2}\leq i_{1}$ and $j_{2}\leq j_{1}$. In summary, in the half-separated case we have the following bound:
  \begin{align}
  	\label{eq:Third part of variance bound}
  	&\sum_{\substack{i_1, i_2\in [0, {n})\\ |i_2-i_1|> \ell(n)}} \
  	\sum_{\substack{j_1, j_2\in [0, {n})\\  0\le |j_2-j_1|\le
  			\ell(n)}} \int \int \1_{B(T_2^{j_1}y, r_{n})}
  	(T_1^{i_1}x)\cdot \1_{B(T_2^{j_2}y, r_{n})}
  	(T_1^{i_2}x) \, d\mu_1(x)d\mu_2(y)\nonumber\\
  	&+\sum_{\substack{i_1, i_2\in [0, {n})\\ 0\leq |i_2-i_1|\leq \ell(n)}} \
  	\sum_{\substack{j_1, j_2\in [0, {n})\\  |j_2-j_1|>
  			\ell(n)}} \int \int \1_{B(T_2^{j_1}y, r_{n})}
  	(T_1^{i_1}x)\cdot \1_{B(T_2^{j_2}y, r_{n})}
  	(T_1^{i_2}x) \, d\mu_1(x)d\mu_2(y)\nonumber\\
  	&\lesssim  n^{3}\ell(n) \biggl(\int \mu_1 (B(y,
  	r_{n}))^2 \, d\mu_2(y) +\mathcal{O}\left( \e^{-\theta\ell(n)}\right) \biggr)\\
  	&\quad +n^{3}\ell(n) \biggl(\int \mu_2 (B(y,
  	r_{n}))^2 \, d\mu_1(y) + \mathcal{O}\left( \e^{-\theta\ell(n)}\right)\biggr).\nonumber
  \end{align}

Combining \eqref{eq:First part of variance bound}, \eqref{eq:Second part of variance bound}, and \eqref{eq:Third part of variance bound} we have  
  \begin{align*}
\frac{\E(S_n^2)-\E( S_n)^2}{\E( S_n)^2}&\lesssim \frac{n^{4} \e^{-\theta\ell(n)}}{\E( S_n)^2}+\frac{\ell(n)^2 \E( S_n)}{\E( S_n)^2}+ \frac{n^{3}\ell(n)\int \mu_1 (B(y, r_{n}))^2 \, d\mu_2(y)}{\E( S_n)^2} \\
 &\quad+ \frac{n^{3}\ell(n) \int \mu_2 (B(y,
 	r_{n}))^2 \, d\mu_1(y) }{\E( S_n)^2}+\frac{n^{3} \e^{-\theta\ell(n)}\ell(n)}{\E( S_n)^2}. 
	  \end{align*}
We now consider each of the terms on the right hand side of the above and show that they are each bounded above by a constant multiple of $n^{-\hat\gamma/2}$. This will in turn imply that \eqref{eq:mainest} holds.   The first and fifth terms decay to zero faster than any polynomial by our choice of function $\ell$. Thus these two terms can be bounded above by a constant multiple of $n^{-\hat\gamma/2}.$

For the second term, using \eqref{eq:Expectation lower bound two orbit} we have
  \begin{equation*}
  	\frac{\ell(n)^2 \E(S_n)}{\E(S_n)^2}  \lesssim \frac{\ell(n)^2 }{ n^{\gamma}}\lesssim \frac{1}{ n^{\frac{\hat\gamma}{2}}}.
  \end{equation*}
So this term satisfies the desired upper bound.
  
 For the third term we apply Lemma~\ref{lem:tight} for $(i,j) = (1,2)$ and use \eqref{eq:r_ncond} and \eqref{eq:slowgrow} to obtain
  \begin{align*}
  	&\frac{n^{3}\ell(n)\int \mu_1 (B(y, r_{n}))^2 \, d\mu_2(y)}{\E(S_n)^2} \\
  	&\quad\lesssim \frac{n^{3}\ell(n) \bigl( \int\mu_1(B(y, r_{n})) \,
  		d\mu_1(y) \bigr)^{\frac12} \int\mu_1(B(y, r_{n})) \,
  		d\mu_2(y)}{n^{4} \biggl(\int\mu_1(B(y, r_{n})) \,
  		d\mu_2(y)\biggr)^2} \\
  	&\quad= \frac{\ell(n) \left( n^2\int\mu_1(B(y, r_{n})) \, d\mu_1(y)
  		\right)^{\frac12}}{n^2 \int\mu_1(B(y, r_{n})) \,
  		d\mu_2(y)} \lesssim \frac{n^{\frac\alpha2}\ell(n)}{n^\gamma} \lesssim \frac{1}{n^{\frac{\hat\gamma}2}} .\\
  \end{align*}  
 So the third term satisfies the desired decay rate. By an analogous argument it can be shown that the fourth term is also bounded above by a constant multiple of $n^{-\hat\gamma/2}.$ This completes our proof.   
\end{proof}

\section{One orbit case: conditioning and expectation}
\label{sec:one orbit first part}

In this section and the next we fix $(X,T,\mu)$ satisfying the assumptions of Theorem \ref{mainthm:1orb}. We also fix $\beta>0$ satisfying the inequalities listed in this theorem. By the definition of Frostman dimension and the early return exponent, we can choose $\tilde{F}_{\mu},\tilde{D}_{\mu},C>0$ so that the following properties hold:
\begin{itemize}
	\item $\mu(B(x,r))\leq Cr^{\tilde{F}_{\mu}}$ for all $x\in X$ and $r>0.$
	\item  $\mu(A_{r}(n))\leq Cr^{\tilde{D}_{\mu}}$ for all $r>0$ and $n\in \N$. 
	\item $\beta(C_{\mu}-\tilde{D}_{\mu})<1.$
	\item $\beta(C_{\mu}-\tilde{F}_{\mu})<1$.
\end{itemize} We now set out to prove that $\mu$-almost every $x$ satisfies
$$\lim_{n\to \infty}  \frac{ \#\left\{0\le i\neq  j<n:|T^ix- T^jx|\le \frac s{n^{\beta}}\right\}}{n^2\int\mu\left(B\left(z,\frac s{n^{\beta}}\right)\right)~d\mu(z)}= 1$$ for all $s>0$. Assuming we have fixed $s>0$ we associate the random variable $$S_{n}(x)=\sum_{0\leq i\neq j<n}\1_{B(T^ix, \frac{s}{n^{\beta}})}(T^jx).$$ We will prove Theorem \ref{mainthm:1orb} via a second moment analysis of $S_{n}$ as in the proof of Theorem~\ref{thm:2orbgen}. However, in this case, because we are now considering a single orbit instead of two orbits, we have lost a degree of independence that was crucial in our earlier calculations, so for example, even computing the expectation of $S_n$ becomes a significant task. To overcome this issue it is necessary to introduce a conditioning argument. We will outline this argument in the next subsection.  In this section we use it to estimate the expectation, or first moment, of our $S_n$.  In the next section we will estimate the second moment and give the proof of Theorem \ref{mainthm:1orb}.

\subsection{A conditioning argument} 
\label{ssec:conditioning} Given $s>0$, in what follows we define the sequence $(r_n)_n$ via the equation $$r_{n}=\frac{s}{n^{\beta}}$$ for all $n\in \N$. We suppress the dependence on $s$ in this notation. When we refer to a sequence $(r_n)$ we are implicitly making a choice of $s$.

The conditioning argument we will use throughout our proof involves introducing a parameter $\epsilon>0,$ and for each $n\in \N$ we let $m\in \N$ be the unique integer satisfying
\begin{equation}
	\label{eq:define m}
	2^{-m}<r_{n}^{1+\epsilon}\leq 2^{-m+1}.
\end{equation} In our notation we will suppress the dependence of $m$ on $\epsilon$. Our conditioning is provided by the dyadic partition provided by intervals of size $2^{-m}$. With this in mind we introduce the following notation: let 
$$I_{0}:=\left[0,\frac{1}{2^{m}}\right] \textrm{ and }I_{k}:=\left(\frac{k}{2^{m}},\frac{k+1}{2^{m}}\right]\textrm{ for }1\leq k\leq 2^{m}-1.$$ Given $\epsilon>0$ and $0\leq k\leq 2^{m}-1$ we also let $$B_{k}^{+}:=B\left(\frac{k}{2^{m}},r_{n}+\frac{1}{2^{m}}\right) \textrm{ and }B_{k}^{-}:=B\left(\frac{k}{2^{m}},r_{n}-\frac{1}{2^{m}}\right).$$ Again we suppress $\epsilon$ from our notation.

We observe that the partition $\{I_{k}\}_{k=0}^{2^m-1}$ has the property that for all $x, y\in I$ we have
\begin{equation}
	\label{eq:binary conditioning}
\1_{I_{k}}(x)\cdot \1_{B_{k}^{-}}(y) \le 
\1_{I_{k}}(x)\cdot \1_{B(x, r_n)}(y)  \le \1_{I_{k}}(x)\cdot \1_{B_{k}^{+}}(y).
\end{equation}
Moreover, it is an obvious fact that for $x\in [0,1]$ and $i, j\in \N_0$ we have
\begin{equation}
	\label{eq:recurrence conditioning}
\1_{B(T^ix, r_n)}(T^jx) = \sum_{k=0}^{2^m-1} \1_{I_{k}}(T^{i}x)\cdot \1_{B(T^{i}x, r_n)}(T^jx).
\end{equation} Combining \eqref{eq:binary conditioning} and \eqref{eq:recurrence conditioning} yields
\begin{equation}
	\label{eq:final conditioning}
\sum_{k=0}^{2^{m-1}}\1_{I_{k}}(T^{i}x)\1_{B_{k}^{-}}(T^{j}x)\leq \1_{B(T^ix, r_n)}(T^jx)\leq \sum_{k=0}^{2^{m-1}}\1_{I_{k}}(T^{i}x)\1_{B_{k}^{+}}(T^{j}x).
\end{equation}
Crucially the left hand side and right hand side of \eqref{eq:final conditioning} are in a form where our mixing assumptions can be applied. This is the important property that our conditioning approach provides.

We also highlight the following straightforward counting bounds that will be used throughout our proofs. Suppose that $\epsilon>0$ and that $m$ is defined by \eqref{eq:define m}, then for any $x\in [0,1]$ we have
\begin{equation}
	\label{eq:counting bounds}
	\sum_{k=0}^{2^{m}-1}\1_{B_{k}^{+}}(x)\lesssim 2^{m}r_{n}\lesssim n^{\beta\epsilon}.
\end{equation} We will also regularly use the trivial fact that for any $x\in [0,1]$ we have
\begin{equation}
	\label{eq:trivial counting bounds}
	\sum_{k=0}^{2^{m}-1}\1_{I_{k}}(x)=1.
\end{equation}
We will also need the following technical lemma which controls the error when $\int\mu(B(x,r_n))~d\mu(x)$ is approximated by a sum over the dyadic partition corresponding to a choice of $\epsilon$ and $m$. In the proof of this lemma and at many points later on in our discussion, we will need the following equality which follows from the definition of correlation dimension:
\begin{equation}
	\label{eq:integral asymptotics}
	\int \mu(B(x,r_{n}))~d\mu(x)=n^{-\beta C_{\mu}+o(1)}.
\end{equation}

\begin{lemma}
	\label{lem:Riem1} 
Let $s>0$.	Given $\epsilon>0$ and $m$ defined by \eqref{eq:define m}, we have the following
\begin{align*}
	\left| \sum_{k=0}^{2^m-1} \mu(I_{k}) \mu\left(B_{k}^{+}\right)- \int\mu(B(x,r_n))~d\mu(x) \right|&\lesssim\int \mu\left(B\left(x,\frac{8}{2^{m}}\right)\right)\, d\mu(x)\\
	& \lesssim n^{-\beta C_{\mu}(1+\epsilon/2)}
	\end{align*}
and 
\begin{align*}
	\left| \sum_{k=0}^{2^m-1} \mu(I_{k}) \mu\left(B_{k}^{-}\right)- \int\mu(B(x,r_n))~d\mu(x) \right|&\lesssim \int \mu\left(B\left(x,\frac{8}{2^{m}}\right)\right)\, d\mu(x)\\
	& \lesssim n^{-\beta C_{\mu}(1+\epsilon/2)}.
\end{align*}
Moreover, we always have the bound $$\sum_{k=0}^{2^m-1} \mu(I_{k}) \mu\left(B_{k}^{+}\right)\lesssim n^{-\beta C_{\mu}+o(1)}.$$
\end{lemma}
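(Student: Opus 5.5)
The plan is to compare the Riemann-type sum with the integral cell by cell, using the sandwich \eqref{eq:binary conditioning}. Write
$$\int\mu(B(x,r_n))\,d\mu(x)=\sum_{k=0}^{2^m-1}\int_{I_k}\mu(B(x,r_n))\,d\mu(x).$$
For $x\in I_k$ we have $|x-k/2^m|\le 2^{-m}$, hence $B_k^-\subset B(x,r_n)\subset B_k^+$. Integrating over $I_k$ gives
$$\mu(I_k)\mu(B_k^-)\le \int_{I_k}\mu(B(x,r_n))\,d\mu(x)\le \mu(I_k)\mu(B_k^+).$$
So both differences in the statement are at most $\sum_k \mu(I_k)\,\mu(B_k^+\setminus B_k^-)$. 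The set $B_k^+\setminus B_k^-$ is the union of two intervals, each of length $2\cdot 2^{-m}$, centred at $k/2^m\pm r_n$. The task is therefore to bound $\sum_k\mu(I_k)\mu(J_k)$, where $J_k$ is one of these boundary intervals, by $\int\mu(B(x,8/2^m))\,d\mu(x)$.

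For this, I would fix the shift $t=r_n$ and consider $J_k=B(k/2^m+t,2^{-m+1})$. For $x\in I_k$, the interval $J_k$ is contained in $B(x+t,3\cdot 2^{-m})$. This gives
$$\sum_k\mu(I_k)\mu(J_k)\le\int\mu(B(x+t,3\cdot2^{-m}))\,d\mu(x),$$
which is a shifted correlation integral. To remove the shift, I would use a Cauchy--Schwarz argument of the type behind Lemma~\ref{lem:tight}. Cover $[0,1]$ by intervals $Q_j$ of length $4\cdot 2^{-m}$. The integral above is then bounded by $\sum_{j,j'}\mu(Q_j)\mu(Q_{j'})$, where the sum runs over pairs with $Q_{j'}$ near $Q_j+t$. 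For fixed $t$ this pairing is essentially a bounded-to-one correspondence. Cauchy--Schwarz then gives a bound of at most a constant times $\sum_j\mu(Q_j)^2$, and
$$\sum_j\mu(Q_j)^2\le\int\mu(B(x,8/2^m))\,d\mu(x),$$
since $Q_j\subset B(x,8/2^m)$ for each $x\in Q_j$. This yields the first inequality in both bounds; I expect this shift-removal step to be the main technical point.

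For the second inequality in each bound, use \eqref{eq:define m}: $8/2^m\asymp r_n^{1+\epsilon}\asymp n^{-\beta(1+\epsilon)}$. By the existence of $C_\mu$, for large $n$,
$$\int\mu(B(x,8/2^m))\,d\mu(x)\le n^{-\beta(1+\epsilon)(C_\mu-\eta)}$$
for any $\eta>0$. Choosing $\eta$ small enough that $(1+\epsilon)(C_\mu-\eta)\ge C_\mu(1+\epsilon/2)$, which is possible since $C_\mu>0$, gives $\lesssim n^{-\beta C_\mu(1+\epsilon/2)}$. Finally, the last claim follows from the triangle inequality: the sum $\sum_k\mu(I_k)\mu(B_k^+)$ is at most $\int\mu(B(x,r_n))\,d\mu+O(n^{-\beta C_\mu(1+\epsilon/2)})$. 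By \eqref{eq:integral asymptotics} the first term is $n^{-\beta C_\mu+o(1)}$, and the error term is smaller.
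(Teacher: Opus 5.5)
Your proof is correct and follows essentially the paper's route: you reduce to the boundary pieces of width $\asymp 2^{-m}$ at distance $r_n$ from the dyadic centres, remove the shift by $r_n$ with Cauchy--Schwarz against sums of squared measures of intervals of length $\asymp 2^{-m}$, bound those sums by $\int\mu(B(x,8/2^m))\,d\mu(x)$, and finish with $C_\mu$ and \eqref{eq:define m}. The differences are cosmetic. Your sandwich $B_k^-\subset B(x,r_n)\subset B_k^+$ handles both signs at once, whereas the paper treats $B_k^+$ via $g_m$ and calls $B_k^-$ analogous. You also use an auxiliary partition into intervals of length $4\cdot 2^{-m}$, where the paper applies the index shift $k\mapsto k+p$ directly to the dyadic intervals $I_k$ and the balls $B(k/2^m,3/2^m)$.
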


\begin{proof}
	In the proof below we just prove the first statement for the set of balls $\{B_{k}^{+}\}_{k=0}^{2^{m}-1}$. The second statement follows by an analogous argument. Our final bound is a consequence of our first statement and \eqref{eq:integral asymptotics}.

Let $g_{m}:[0,1]\to \{\frac{k}{2^{m}}\}_{k=0}^{2^m-1}$ be the function given by the rule $g_{m}(x)=\frac{k}{2^{m}}$ if $x\in I_{k}.$ Then 
\begin{align*}
	&\left|\sum_{k=0}^{2^m-1} \mu\left(I_{k}\right) \mu\left(B_{k}^{+}\right)- \int\mu(B(x,r_n))~d\mu(x)\right|\\
&= \left|\int \mu\left(B\left(g_{m}(x),r_{n}+ \frac{1}{2^{m}}\right)\right)-\mu(B(x,r_{n}))\, d\mu(x)\right|\\
&=\int \mu\left(B\left(g_{m}(x),r_{n}+ \frac{1}{2^{m}}\right)\setminus B(x,r_{n})\right)\, d\mu(x)\\
&\leq \int \mu\left(B\left(g_{m}(x)+r_{n},\frac{1}{2^{m}}\right)\right)+\mu\left(B\left(g_{m}(x)-r_{n},\frac{1}{2^{m}}\right)\right)\, d\mu. 
\end{align*} Thus to prove our result it remains to establish suitable upper bounds for the two integrals appearing in the final line of the above. We will just show how to bound the first of these integrals. The other is handled similarly. Let $p\in \N$ be the unique integer satisfying $$\frac{p}{2^{m}}\leq r_{n}< \frac{p+1}{2^{m}}.$$ Then we have the following 
\begin{align}
	\label{eq:measure of annulus ends}
&\int \mu\left(B\left(g_{m}(x)+r_{n},\frac{1}{2^{m}}\right)\right)\, d\mu(x)\nonumber\\
&\leq \sum_{k=0}^{2^{m}-1}\mu\left(I_{k}\right)\mu\left(B\left(\frac{k+p}{2^{m}},\frac{3}{2^{m}}\right)\right)\nonumber\\
&\leq \left(\sum_{k=0}^{2^{m}-1}\mu\left(I_{k}\right)^{2}\right)^{1/2}\left(\sum_{k=0}^{2^{m}-1}\mu\left(B\left(\frac{k+p}{2^{m}},\frac{3}{2^{m}}\right)\right)^{2}\right)^{1/2}\quad (\text{Cauchy-Schwarz})\nonumber\\
&\leq \left(\sum_{k=0}^{2^{m}-1}\mu\left(I_{k}\right)^{2}\right)^{1/2}\left(\sum_{k=0}^{2^{m}-1}\mu\left(B\left(\frac{k}{2^{m}},\frac{3}{2^{m}}\right)\right)^{2}\right)^{1/2}.
\end{align}
We now bound the two terms appearing in \eqref{eq:measure of annulus ends}. The following bound is obvious
\begin{equation}
	\label{eq:annulusterma}
\left(\sum_{k=0}^{2^{m}-1}\mu\left(I_{k}\right)^{2}\right)^{1/2}\leq \left(\int \mu\left(B\left(x,\frac{8}{2^{m}}\right)\right)\, d\mu(x)\right)^{1/2}.
\end{equation}
For the second term we have 
\begin{align}
	\label{eq:annulustermb}
\sum_{k=0}^{2^{m}-1}\mu\left(B\left(\frac{k}{2^{m}},\frac{3}{2^{m}}\right)\right)^{2}&=\sum_{k=0}^{2^{m-1}}\sum_{q=-3}^{2}\mu(I_{k+q})\mu\left(B\left(\frac{k}{2^{m}},\frac{3}{2^{m}}\right)\right)\nonumber\\
&\leq \sum_{k=0}^{2^{m-1}}\sum_{q=-3}^{2}\mu(I_{k+q})\mu\left(B\left(\frac{k+q}{2^{m}},\frac{7}{2^{m}}\right)\right)\nonumber\\
&\leq 6 \sum_{k=0}^{2^{m}-1}\mu(I_{k})\mu\left(B\left(\frac{k}{2^{m}},\frac{7}{2^{m}}\right)\right)\nonumber\\
&\leq 6\int \mu\left(B\left(x,\frac{8}{2^{m}}\right)\right)~d\mu(x).
\end{align}
Substituting \eqref{eq:annulusterma} and \eqref{eq:annulustermb} into \eqref{eq:measure of annulus ends}, and using the definition of correlation dimension and \eqref{eq:define m} now yields
\begin{align*}
\int \mu\left(B\left(g_{m}(x)+r_{n},\frac{1}{2^{m}}\right)\right)\, d\mu(x)&\leq 7\int \mu\left(B\left(x,\frac{8}{2^{m}}\right)\right)~d\mu(x)\\
&\lesssim  n^{-\beta C_{\mu}(1+\epsilon/2)}.
\end{align*}This completes our proof.
\end{proof}

\subsection{Expectation estimate}

We next give the main result from this section, an estimate on $\E(S_n)$.  Recall that
$\beta, \tilde{D}_{\mu}, \tilde{F}_{\mu}$ and $C_{\mu}$ satisfy 
$$\beta(C_{\mu}-\tilde{D}_{\mu})<1,\,
\beta(C_{\mu}-\tilde{F}_{\mu})<1,\,
\beta C_{\mu}<2.$$ 

\begin{proposition}
\label{prop:expectation}
There exists $\epsilon_{1}>0$ such that for any $s>0$ we have
\begin{align*}
	\left|\E(S_n)-n^{2}\int \mu(B(x,r_{n}))\,d\mu(x)\right|&\lesssim n^{2-\beta C_{\mu}-\epsilon_{1}}.
	\end{align*}
\end{proposition}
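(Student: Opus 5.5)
By $T$-invariance, $\E(S_n)=\sum_{0\le i\neq j<n}\mu\bigl(\{x:|T^{|j-i|}x-x|\le r_n\}\bigr)$, i.e.\ $\E(S_n)=2\sum_{k=1}^{n-1}(n-k)\,\mu(A'_{r_n}(k))$, where $A'_r(k)$ is the closed version of $A_r(k)$ (also of measure $\lesssim r^{\tilde D_\mu}$, applying the early return property with a slightly larger radius). I will split the sum over the lag $k$ into short lags $k\le K_n:=n^{\eta}$ and long lags $k>K_n$, with $\eta>0$ small, to be chosen at the end.

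\textbf{Short lags.} Here I use only the early return property: the contribution is
\[
\lesssim nK_n r_n^{\tilde D_\mu}\lesssim n^{1+\eta-\beta\tilde D_\mu}.
\]
Since $\beta(C_\mu-\tilde D_\mu)<1$, we have $1-\beta\tilde D_\mu<2-\beta C_\mu-\delta_0$ for some $\delta_0>0$. Hence, provided $\eta<\delta_0/2$, this term is $\lesssim n^{2-\beta C_\mu-\delta_0/2}$. The target $n^2\int\mu(B(x,r_n))\,d\mu$ also receives only $\lesssim nK_n\int\mu(B(x,r_n))\,d\mu$ from these lags, which is negligible by \eqref{eq:integral asymptotics}.

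\textbf{Long lags.} I use the conditioning \eqref{eq:final conditioning} with a fixed $\epsilon>0$ and integrate:
\[
\mu(A'_{r_n}(k))\le\sum_{k'=0}^{2^m-1}\int \1_{I_{k'}}\cdot\1_{B^+_{k'}}\circ T^k\,d\mu .
\]
Exponential mixing for $BV$ against $L^\infty$, applied to each summand with $\|\1_{I_{k'}}\|_{BV}\le 3$, gives
\[
\int \1_{I_{k'}}\cdot\1_{B^+_{k'}}\circ T^k\,d\mu=\mu(I_{k'})\mu(B^+_{k'})+\mathcal{O}(e^{-\theta k}).
\]
Summing over the $2^m\lesssim n^{\beta(1+\epsilon)}$ cells costs $\mathcal{O}(n^{\beta(1+\epsilon)}e^{-\theta n^\eta})$, which is superpolynomially small. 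Lemma~\ref{lem:Riem1} then replaces $\sum_{k'}\mu(I_{k'})\mu(B^{\pm}_{k'})$ by $\int\mu(B(x,r_n))\,d\mu$, with error $\lesssim n^{-\beta C_\mu(1+\epsilon/2)}$. The lower bound follows in the same way using $B^-_{k'}$. Multiplying by $\le n^2$ pairs, the long-lag error is
\[
\lesssim n^{2-\beta C_\mu-\beta C_\mu\epsilon/2}+\text{(superpolynomially small)}.
\]

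\textbf{Conclusion.} Combining the two ranges, the statement holds with
\[
\epsilon_1=\min\{\delta_0/2,\ \beta C_\mu\epsilon/2\}.
\]
The implied constants depend on $s$, but $\epsilon_1$ does not, since $\epsilon$ and $\eta$ depend only on $\beta,C_\mu,\tilde D_\mu$.

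\textbf{Main obstacle.} The delicate point is the mixing step at polynomially small scales. The cell count $2^m$ grows polynomially, so it must be beaten by the decay rate; this is why long lags must be taken to be at least a power $n^\eta$ and not merely $\ell(n)$, although $\ell(n)$ would also work by Definition~\ref{def:ell}. Correspondingly, the threshold has to be small enough that the early return bound handles the short lags. The Frostman hypothesis $\beta(C_\mu-\tilde F_\mu)<1$ does not appear necessary here; I expect it to be needed for the second moment.
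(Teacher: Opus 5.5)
Your proof is correct and follows essentially the same route as the paper. It splits by lag, bounds the short lags with the early return property, and handles the long lags via the dyadic conditioning \eqref{eq:final conditioning}, exponential mixing and Lemma~\ref{lem:Riem1}. The only differences are cosmetic: the paper cuts at $\ell(n)$ rather than $n^{\eta}$, and you reduce to $\mu(A'_{r_n}(k))$ by invariance before conditioning. Cutting at $\ell(n)$ works because $2^m e^{-\theta\ell(n)}$ is already superpolynomially small, so your remark that the cut must be a power of $n$ is unnecessary.
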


\begin{proof}We start by stating the following simple bound and equality:
	\begin{equation}
		\label{eq:expectation lower bound}
	\sum_{\substack{i, j \in [0, {n})\\ |j-i| \geq\ell(n)}}\int \1_{B(T^i(x), r_n)} (T^jx)\, d\mu(x)\leq \E(S_{n})
	\end{equation}and 
	\begin{align}
		\label{eq:expectation equality}
	\E(S_{n})&= \sum_{\substack{i, j \in [0, {n})\\ |j-i| \geq\ell(n)}}\int \1_{B(T^i(x), r_n)} (T^jx)\, d\mu(x)\\
	&\quad \quad+\sum_{\substack{i, j \in [0, {n})\\ |j-i| <\ell(n)}}\int \1_{B(T^i(x), r_n)} (T^jx)\, d\mu(x)\nonumber.
	\end{align}
	Thus to prove our result it is sufficient to obtain suitably good upper and lower bounds for the sum over the well separated indices, and an upper bound for the sum over close indices. We begin by proving an upper bound for the summation over well separated indices. Let $\epsilon>0$ and $m$ be defined by \eqref{eq:define m}.
 By our exponential $4$-mixing on intervals assumption, the right hand inequality in \eqref{eq:final conditioning}, and Lemma \ref{lem:Riem1} we have:
\begin{align}
	\label{eq:Separated bound1}
	&\sum_{\substack{i, j \in [0, {n})\\ |j-i| \geq\ell(n)}} \int  \1_{B(T^i(x), r_n)} (T^jx) ~d\mu(x)\nonumber\\
	&\quad\leq   \sum_{\substack{i, j \in [0, {n})\\ |j-i| \geq\ell(n)}}\sum_{k=0}^{2^m-1} \int  \1_{I_{k}}(T^ix)\cdot \1_{B_{k}^{+}}(T^jx) ~d\mu(x)\nonumber
	\\
	&\quad=   \sum_{\substack{i, j \in [0, {n})\\ |j-i| \geq\ell(n)}}\sum_{k=0}^{2^m-1}\left(  \mu\left(I_{k}\right) \mu\left(B_{k}^{+}\right) + \mathcal{O}(\e^{-\theta\ell(n)})\right)\nonumber\\ 
	 & \quad\leq \sum_{\substack{i, j \in [0, {n})\\ |j-i| \geq\ell(n) }}\left(\int\mu(B(x,r_{n}))\, d\mu(x)+\mathcal{O}\left(n^{-\beta C_{\mu}(1+\epsilon/2)}\right)\right)+\mathcal{O}(2^{m}\e^{-\theta\ell(n)})\nonumber\\
	 &\quad\leq \sum_{\substack{i, j \in [0, {n})\\ |j-i| \geq\ell(n) }}\int\mu(B(x,r_{n}))\, d\mu(x)+\mathcal{O}\left(n^{2-\beta C_{\mu}(1+\epsilon/2)}\right).
\end{align}  In the final line we have used that $2^{m}\e^{-\theta\ell(n)}$ decays to zero faster than any polynomial (recall Definition~\ref{def:ell}), and so can be absorbed into our final error term. It can similarly be shown using the first inequality in \eqref{eq:final conditioning} that 
\begin{align}
	\label{eq:separated bound2}&\sum_{\substack{i, j \in [0, {n})\\ |j-i| \geq\ell(n)}} \int  \1_{B(T^i(x), r_n)} (T^jx) ~d\mu(x)\\
	 &\hspace{2cm} \geq\sum_{\substack{i, j \in [0, {n})\\ |j-i| \geq\ell(n) }}\int\mu(B(x,r_{n}))\, d\mu(x)+\mathcal{O}\left(n^{2-\beta C_{\mu}(1+\epsilon/2)}\right)\nonumber.
	\end{align} Combining \eqref{eq:integral asymptotics}, \eqref{eq:Separated bound1}, \eqref{eq:separated bound2},  and using that $$\#\{i,j\in [0,n):|j-i|\geq \ell(n)\}=n^{2}+\mathcal{O}(n\ell(n)),$$  we have that there exists $\epsilon_{1}'>0$ sufficiently small that does not depend on our choice of $s$ such that
	\begin{align}
		\label{eq:finalseparated bound}
		&\sum_{\substack{i, j \in [0, {n})\\ |j-i| \geq\ell(n)}} \int  \1_{B(T^i(x), r_n)} (T^jx) ~d\mu(x) =n^{2}\int\mu(B(x,r_{n}))\, d\mu(x) +\mathcal{O}(n^{2-\beta C_{\mu}-\epsilon_{1}'}).
	\end{align}

	We now need to bound the contribution coming from those indices satisfying $|j-i|<\ell(n).$ Using the $T$-invariance of $\mu$ and the inequality $\beta(C_{\mu}-\tilde{D}_{\mu})<1$, it follows that there exists $\epsilon_{2}'>0$ sufficiently small such that does not depend on $s$ such that the following holds:
\begin{align}
	\label{eq:Close bound}
	&\sum_{\substack{i, j \in [0, {n})\\ |j-i| <\ell(n)}}\int\1_{B(T^i(x), r_n)} (T^jx)\, d\mu(x) \nonumber\\
	&\quad\leq \sum_{\substack{i, j \in [0, {n}),\, i>j\\ |j-i| <\ell(n)}}\int\1_{A_{r_{n}}(i-j)} (T^jx)\, d\mu(x)\nonumber\\
	&\hspace{3cm} +\sum_{\substack{i, j \in [0, {n}),\, j>i\\ |j-i| <\ell(n)}}\int\1_{A_{r_{n}}(j-i)} (T^ix)\, d\mu(x)\nonumber\\
	&\quad\leq \sum_{\substack{i, j \in [0, {n}),\, i>j\\ |j-i| <\ell(n)}}\int\1_{A_{r_{n}}(i-j)} (x)\, d\mu(x)\nonumber\\
	& \hspace{3cm}+\sum_{\substack{i, j \in [0, {n}),\, j>i\\ |j-i| <\ell(n)}}\int\1_{A_{r_{n}}(j-i)} (x)\, d\mu(x)\nonumber\\
	&\quad =\sum_{\substack{i, j \in [0, {n})\\ |j-i| <\ell(n)}}\mu(A_{r_{n}}(|j-i|)) \lesssim r_{n}^{\tilde{D}_{\mu}}\cdot  n\ell(n) \lesssim n^{1-\tilde{D}_{\mu}\beta}\cdot  \ell(n)\nonumber\\
	&\quad \lesssim  n^{2-\beta C_{\mu}-\epsilon_{2}'}. 
\end{align}
In the penultimate line we have used the definition of $r_{n}$. In the final line we have used that $\beta(C_{\mu}-\tilde{D}_{\mu})<1$ is equivalent to $2-\beta C_{\mu}>1-\tilde{D}_{\mu}\beta$, and that $\ell(n) = n^{o(1)}$.  Combining \eqref{eq:finalseparated bound} and \eqref{eq:Close bound} it is clear that our result holds if we take $\epsilon_{1}=\min \{\epsilon_{1}',\epsilon_{2}'\}$.
\end{proof}

\section{The one orbit case: second moment argument}
\label{sec:one orbit second moment argument}

We now turn our attention to $\E(S_{n}^{2})$. We start by observing that
$$\E(S_{n}^{2})=4\sum_{i_1< j_1\in [0, n)} \sum_{i_2< j_2\in [0, n)} \int \1_{B(T^{i_1}(x), r_n)} (T^{j_1}x) \1_{B(T^{i_2}(x), r_n)} (T^{j_2}x)~d\mu(x).$$  When analysing this expression for $\E(S_{n}^{2})$ it is useful to consider the ordering of the quadruple $(i_{1},i_{2},j_{1},j_{2})$. We start by highlighting that because our indices always satisfy $i_1<j_1$ and $i_2<j_2,$ there are six possible orderings for these indices:
 \begin{align*}
 	&i_1\leq j_1\leq i_2\leq j_2\tag{Case A}\\
 	&i_1\leq i_{2}\leq j_1\leq j_2\tag{Case B}\\
 	&i_1\leq i_{2}\leq j_{2}\leq j_1\tag{Case C}\\
 	&i_2\leq j_2\leq i_1\leq j_1\tag{Case D}\\
 	&i_2\leq i_{1}\leq j_2\leq j_1\tag{Case E}\\
 	&i_2\leq i_{1}\leq j_{1}\leq j_2\tag{Case F}.
 \end{align*} We emphasise that after relabelling we see that Case A is equivalent to Case D, Case B is equivalent to Case E, and Case C is equivalent to Case F. In what follows whenever we speak of a quadruple of indices $(i_{1},i_{2},j_{1},j_{2})$ we will always implicitly assume that $i_{1}<j_{1}$ and $i_{2}<j_{2}$. In addition to focusing on these cases it will be necessary to consider the gaps between successive indices. 
 
 \subsection{Well separated indices}
 We say that a quadruple $(i_{1},i_{2},j_{1},j_{2})\in [0,n)^{4}$ is \emph{well separated} if all terms are separated by a factor at least $\ell(n),$ i.e. 
 $$\min\left\{|i_{1}-i_{2}|, |i_{1}-j_{1}|, |i_{1}-j_{2}|,|i_{2}-j_{1}|,|i_{2}-j_{2}|,|j_{1}-j_{2}|\right\}\geq \ell(n).$$
  The following proposition considers the case when the indices are well separated, and as we will see, it captures the dominant term in our expansion of $\E(S_{n}^{2})$.
 
 \begin{proposition}
 	\label{prop:Well separated} 
 	There exists $\epsilon_{2}>0$ such that for any $s>0,$ if $(i_{1},i_{2},j_{1},j_{2}) $ are well separated then
 \begin{align*}
 	&\left|\int \1_{B(T^{i_1}(x), r_n)} (T^{j_1}x) \1_{B(T^{i_2}(x), r_n)} (T^{j_2}x)~d\mu(x)-\left(\int \mu(B(x,r_{n}))\,d\mu(x)\right)^{2}\right|\\
 	& \hspace{8cm}\lesssim  n^{-2\beta C_{\mu}-\epsilon_{2} }.
 \end{align*}
 \end{proposition}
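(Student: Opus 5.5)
We will use the conditioning from Subsection~\ref{ssec:conditioning} on both indicator factors and then apply exponential $4$-mixing for intervals. Fix $\epsilon>0$ (to be chosen small, independent of $s$) and let $m$ be given by \eqref{eq:define m}. Apply \eqref{eq:final conditioning} to each factor:
\[
\1_{B(T^{i_1}x,r_n)}(T^{j_1}x)\1_{B(T^{i_2}x,r_n)}(T^{j_2}x)\le \sum_{k,k'=0}^{2^m-1}\1_{I_k}(T^{i_1}x)\1_{B_k^+}(T^{j_1}x)\1_{I_{k'}}(T^{i_2}x)\1_{B_{k'}^+}(T^{j_2}x).
\]
A matching lower bound holds with $B^-$. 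Each summand is a product of four interval indicators composed with iterates of $T$. By $T$-invariance we shift so that the smallest index is $0$. We then order the four indices increasingly as $0\le a<b\le c$ and decide which consecutive pair $(\1_1,\1_2)$, $(\1_3,\1_4)$ of the ordered list we group.

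The case analysis over the orderings A--F determines whether the grouped pairs are exactly $\{i_1,j_1\}$ and $\{i_2,j_2\}$.

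\textbf{Case A (and D).} Here the pairs are $i_1<j_1\le i_2<j_2$. Exponential $4$-mixing for intervals splits each summand into
\[
\int \1_{I_k}\1_{B_k^+}\circ T^{j_1-i_1}\,d\mu\int \1_{I_{k'}}\1_{B_{k'}^+}\circ T^{j_2-i_2}\,d\mu+\mathcal{O}(\e^{-\theta'\ell(n)}).
\]
Each two-point factor is then handled by the $4$-mixing property with trivial outer indicators, giving $\mu(I_k)\mu(B_k^+)+\mathcal{O}(\e^{-\theta'\ell(n)})$.

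\textbf{Cases B, C (and E, F).} In these the natural consecutive grouping mixes the pairs, for example $(i_1,i_2)$ and $(j_1,j_2)$ in case B. The $4$-mixing definition only decouples a consecutive split, so we apply it iteratively instead. Since all gaps are at least $\ell(n)$, we peel off the last indicator using the $4$-mixing estimate with $\1_4=1$ and $\1_1=1$, reducing to a three-factor integral. We repeat this, or equivalently use exponential mixing for $BV$ against $L^\infty$, where the product of the earlier indicators pulled back is not BV, so the peeling must start from the end. Every gap being at least $\ell(n)$ lets each of the four factors be fully decoupled, giving $\mu(I_k)\mu(B_k^+)\mu(I_{k'})\mu(B_{k'}^+)+\mathcal{O}(\e^{-\theta'\ell(n)})$ in every ordering. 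I expect this full decoupling in the interleaved cases to be the main obstacle, that is, checking that the available $4$-mixing inequality, applied with some indicators set to $1$ and using intervals composed appropriately, suffices. If it does not, I would fall back on a chained version: apply $4$-mixing to the outermost consecutive split, then again inside each block.

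After decoupling, summing over $k,k'$ gives
\[
\Bigl(\sum_k\mu(I_k)\mu(B_k^+)\Bigr)^2+\mathcal{O}(2^{2m}\e^{-\theta'\ell(n)}).
\]
The error decays faster than any polynomial by Definition~\ref{def:ell}, since $2^m\lesssim n^{\beta(1+\epsilon)}$. By Lemma~\ref{lem:Riem1}, $\sum_k\mu(I_k)\mu(B_k^\pm)=\int\mu(B(x,r_n))\,d\mu+\mathcal{O}(n^{-\beta C_\mu(1+\epsilon/2)})$, and both quantities are $n^{-\beta C_\mu+o(1)}$ by \eqref{eq:integral asymptotics}. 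Squaring, the difference from $(\int\mu(B(x,r_n))\,d\mu)^2$ is
\[
\lesssim n^{-\beta C_\mu+o(1)}\cdot n^{-\beta C_\mu(1+\epsilon/2)}\le n^{-2\beta C_\mu-\epsilon_2}
\]
for any fixed $\epsilon_2<\beta C_\mu\epsilon/2$. This value of $\epsilon_2$ is independent of $s$, with the $s$-dependence absorbed into the implied constant. The lower bound with $B^-$ is identical, and together the two bounds give the proposition.
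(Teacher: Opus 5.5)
Your overall architecture is the same as the paper's: conditioning both factors via \eqref{eq:final conditioning}, fully decoupling each of the $4^m$ summands, summing, and then using Lemma~\ref{lem:Riem1} and \eqref{eq:integral asymptotics} when squaring. Your final bookkeeping is also correct: the cross term gives any $\epsilon_2<\beta C_\mu\epsilon/2$, with $s$ entering only the implied constant.

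What does not work is your primary mechanism for the interleaved orderings B, C, E, F. Setting $\1_1=\1_4=1$ in the $4$-mixing definition leaves only a two-point estimate. That cannot split one indicator off a product of three. More generally, the definition only ever splits a pair of factors from a pair, so it never gives the $3+1$ split needed to peel off the last of four factors.

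Your reasoning about BV against $L^\infty$ is also backwards. In $\int\psi\cdot\phi\circ T^n\,d\mu$ the BV function $\psi$ sits at the \emph{earlier} time. Peeling off the last indicator would therefore put the product of the earlier pulled-back indicators in the BV slot, and that is exactly what fails. The fact that those products are not BV forces you to peel from the \emph{front}:
\begin{itemize}
\item put the single earliest interval indicator, with BV norm at most $3$ uniformly in $k$, in the BV slot;
\item put the product of all later indicators, with sup norm at most $1$, in the $L^\infty$ slot;
\item shift by $T$-invariance and repeat.
\end{itemize}
Each step costs $\mathcal{O}(\e^{-\theta\ell(n)})$, since every consecutive gap is at least $\ell(n)$. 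This is precisely the paper's argument. It handles all six orderings identically and uses only the BV against $L^\infty$ hypothesis.

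Your fallback is nonetheless valid and is a mild variant of the paper's route. Apply $4$-mixing for intervals at the gap between the second and third smallest indices, then two-point mixing inside each block. Your worry about which indices end up grouped together is unfounded. Full decoupling yields $\mu(I_k)\mu(I_{k'})\mu(B_k^+)\mu(B_{k'}^+)+\mathcal{O}(\e^{-\theta'\ell(n)})$ in every ordering, and this product is insensitive to the pairing, so the case split over A--F is unnecessary. The proof therefore goes through, but only via the fallback or front-peeling; the peeling-from-the-end paragraph should be discarded.
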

 \begin{proof}
 In this proof we will assume that we are in Case A. The other cases are handled similarly.	We first establish a suitable upper bound. Let $\epsilon>0$ and $m$ be defined by \eqref{eq:define m}. The following inequality is a consequence of the right hand side of \eqref{eq:final conditioning}:
 \begin{align}
 \label{eq:Well separated conditioning}&\int \1_{B(T^{i_1}(x), r_n)} (T^{j_1}x) \1_{B(T^{i_2}(x), r_n)} (T^{j_2}x)~d\mu(x)\nonumber\\
 	&\leq \sum_{k=0}^{2^m-1}\sum_{l=0}^{2^{m}-1} \int  \1_{I_{k}}(T^{i_{1}}x)\cdot \1_{I_{l}}(T^{i_{2}}x)\cdot \1_{B_{k}^{+}}(T^{j_{1}}x)\cdot \1_{B_{l}^{+}}(T^{j_{2}}x) ~d\mu(x).
 	\end{align}
 	Let us now fix $0\leq k\leq 2^{m-1}$ and $0\leq l\leq 2^{m-1}$. 
 	Repeatedly using the $T$-invariance of $\mu$ and our exponential mixing for BV against $L^{\infty}$ assumption, we have
 	\begin{align*}
 	&\int \1_{I_{k}}(T^{i_{1}}x)\cdot \1_{I_{l}}(T^{i_{2}}x)\cdot \1_{B_{k}^{+}}(T^{j_{1}}x)\cdot \1_{B_{l}^{+}}(T^{j_{2}}x) ~d\mu(x)\\
 	&\quad=\int \1_{I_{k}}(x)\cdot \1_{I_{l}}(T^{i_{2}-i_1}x)\cdot \1_{B_{k}^{+}}(T^{j_{1}-i_1}x)\cdot \1_{B_{l}^{+}}(T^{j_{2}-i_1}x) ~d\mu(x)\\
 		&\quad= \mu\left(I_{k}\right) \int  \1_{I_{l}}(x)\cdot \1_{B_{k}^{+}}(T^{j_{1}-i_2}x)\cdot \1_{B_{l}^{+}}(T^{j_{2}-i_2}x) ~d\mu(x)+\mathcal{O}\left(\e^{-\theta\ell(n)}\right)\\
 		&\quad= \mu\left(I_{k}\right)\mu\left(I_{l}\right) \int   \1_{B_{k}^{+}}(x)\cdot \1_{B_{l}^{+}}(T^{j_{2}-j_1}x)~d\mu(x)+\mathcal{O}\left(\e^{-\theta\ell(n)}\right)\\
 		&\quad=  \mu(I_{k})\mu(I_{l})\mu(B_{k}^{+})\mu(B_{l}^{+})+\mathcal{O}\left(\e^{-\theta\ell(n)}\right).
 		\end{align*}
 		Using the equality above, together with Lemma \ref{lem:Riem1}, \eqref{eq:integral asymptotics} and \eqref{eq:Well separated conditioning} yields 
 		\begin{align*}
 		&\int \1_{B(T^{i_1}(x), r_n)} (T^{j_1}x) \1_{B(T^{i_2}(x), r_n)} (T^{j_2}x)~d\mu(x)\\
 		&\quad\leq \sum_{k=0}^{2^{m-1}}\sum_{l=0}^{2^{m}-1}\left(\mu(I_{k})\mu(I_{l})\mu(B_{k}^{+})\mu(B_{l}^{+})+\mathcal{O}\left(\e^{-\theta\ell(n)}\right)\right)\\
 		&\quad=\left(\int \mu(B(x,r_{n}))\,d\mu(x)+\mathcal{O}(n^{-\beta C_{\mu}(1+\epsilon/2)})\right)^{2}+\mathcal{O}\left(4^{m}\e^{-\theta\ell(n)}\right)\\
 		&\quad=\left(\int \mu(B(x,r_{n}))\,d\mu(x)\right)^{2}+\mathcal{O}(n^{-\beta C_{\mu}(2+\epsilon/4)})+\mathcal{O}\left(4^{m}\e^{-\theta\ell(n)}\right).
 		\end{align*}
 		Using that our second error term decays to zero faster than any polynomial, it follows from the above that there exists $\epsilon_{1}'>0$ that does not depend on our choice of $s$ such that 
 		\begin{align}
 			\label{eq:well separated upper bound}
 			&\int \1_{B(T^{i_1}(x), r_n)} (T^{j_1}x) \1_{B(T^{i_2}(x), r_n)} (T^{j_2}x)~d\mu(x)\nonumber\\
 			&\quad\leq \left(\int \mu(B(x,r_{n}))\,d\mu(x)\right)^{2}+\mathcal{O}(n^{-2\beta C_{\mu}-\epsilon_{1}'}).
 		\end{align} It can similarly be shown using the left hand side of \eqref{eq:final conditioning} that there exists $\epsilon_{2}'>0$ that does not depend on $s$ such that 
 		\begin{align}
 			\label{eq:well separated lower bound}
 			&\int \1_{B(T^{i_1}(x), r_n)} (T^{j_1}x) \1_{B(T^{i_2}(x), r_n)} (T^{j_2}x)~d\mu(x)\nonumber\\
 			&\quad\geq \left(\int \mu(B(x,r_{n}))\,d\mu(x)\right)^{2}+\mathcal{O}(n^{-2\beta C_{\mu}-\epsilon_{2}'}).
 		\end{align} Together \eqref{eq:well separated upper bound} and \eqref{eq:well separated lower bound} imply our result. 
 \end{proof}

 \subsection{Not well separated indices}
We now consider what happens when our indices are not well separated. For our purposes, we just need to bound from above the contribution to $\E(S_{n}^{2})$ coming from those terms. We proceed via a case analysis based on how many large gaps there are between successive indices. Before doing this it is necessary to introduce some terminology. Given a quadruple of indices $(i_{1},i_{2},j_{1},j_{2})$ we inductively define
 $$\gamma_{1}=\min \{i_{1},i_{2},j_{1},j_{2}\}$$ and $$\gamma_{i}=\min\{\{i_{1},i_{2},j_{1},j_{2}\}\setminus \cup_{l=1}^{i-1}\{\gamma_{l}\}\}\}$$ for $2\leq i\leq 4$. When there isn't a unique minimum then we choose arbitrarily. Notice that the set $\{\gamma_{i}\}$ satisfies $\{\gamma_{1},\gamma_{2},\gamma_{3},\gamma_{4}\}=\{i_{1},i_{2},j_{1},j_{2}\}$ and $$\gamma_{1}\leq \gamma_{2}\leq \gamma_{3}\leq \gamma_{4}.$$ It is this second property that will be more useful for our purposes. The cases we will study are defined as follows.
 \begin{itemize}
 	\item We say that $(i_{1},i_{2},j_{1},j_{2})$ has \emph{no gaps} if $$\gamma_{i+1}-\gamma_{i}<\ell(n)$$ for all $1\leq i\leq 3.$
 	\item We say that $(i_{1},i_{2},j_{1},j_{2})$ has \emph{one gap} if there exists a unique $1\leq i\leq 3$ such that $$\gamma_{i+1}-\gamma_{i}\geq \ell(n).$$
 	\item We say that $(i_{1},i_{2},j_{1},j_{2})$ has \emph{two gaps} if there exists precisely two indices $i,j\in \{1,2,3\}$ satisfying
 	$$\gamma_{i+1}-\gamma_{i}\geq \ell(n)\textrm{ and }\gamma_{j+1}-\gamma_{j}\geq \ell(n). $$
 \end{itemize} We emphasise that a quadruple $(i_{1},i_{2},j_{1},j_{2})$ is either well separated, has no gaps, has one gap, or has two gaps. We will obtain bounds for each of these cases in turn.  We recall again, for use in the next propositions, that
$\beta, \tilde{D}_{\mu}$ and $\tilde{F}_{\mu}$ satisfy $$\beta(C_{\mu}-\tilde{D}_{\mu})<1,\,
\beta(C_{\mu}-\tilde{F}_{\mu})<1,\,
\beta C_{\mu}<2.$$ 
 
 \subsubsection{The no gaps case}
  In the case of no gaps we have the following bound:
 
 \begin{proposition}
 	\label{prop:no gap prop}
There exists $\epsilon_{3}>0$ such that for any $s>0$ we have
\begin{align*}
	&\sum_{\substack{(i_1, i_2, j_1, j_2)\\ (i_1, i_2, j_1, j_2) \textrm{ has } \\ \textrm{no gaps}}}\int \1_{B(T^{i_1}(x), r_n)} (T^{j_1}x) \1_{B(T^{i_2}(x), r_n)} (T^{j_2}x)~d\mu(x)
		\lesssim n^{4-2\beta C_{\mu}-\epsilon_{3}}.
	\end{align*}	
 \end{proposition}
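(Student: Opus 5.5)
In the no gaps case all four indices lie in a window of length less than $3\ell(n)$ starting at $\gamma_1$. There are therefore at most $n\cdot(3\ell(n))^{3}\lesssim n\,\ell(n)^3$ such quadruples. Since $\ell(n)=n^{o(1)}$, it suffices to show that each integrand is bounded by something like $n^{-\beta\kappa}$ with $1-\beta\kappa<4-2\beta C_\mu$, up to an $n^{o(1)}$ loss.

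We may assume $i_1<j_1$ and $i_2<j_2$. We then bound the product of the two indicators by the single indicator $\1_{B(T^{i_1}x,r_n)}(T^{j_1}x)$. By $T$-invariance its integral equals $\mu(A_{r_n}(j_1-i_1))\lesssim r_n^{\tilde D_\mu}\lesssim n^{-\beta\tilde D_\mu}$, using the early return property. Summing over quadruples gives the bound
\[
n^{1-\beta\tilde D_\mu}\ell(n)^3 .
\]
It remains to compare exponents, i.e.\ to check that $1-\beta\tilde D_\mu<4-2\beta C_\mu$. This is equivalent to $2\beta C_\mu-\beta\tilde D_\mu<3$. It follows by adding the two standing inequalities $\beta(C_\mu-\tilde D_\mu)<1$ and $\beta C_\mu<2$.

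Since these inequalities are strict and $\ell(n)^3=n^{o(1)}$, we can choose $\epsilon_3>0$, depending only on $\beta,C_\mu,\tilde D_\mu$ and not on $s$, such that
\[
n^{1-\beta\tilde D_\mu}\ell(n)^3\lesssim n^{4-2\beta C_\mu-\epsilon_3}.
\]
The implied constant depends on $s$ only through $r_n^{\tilde D_\mu}=s^{\tilde D_\mu}n^{-\beta\tilde D_\mu}$.

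I do not expect a real obstacle here. The only point needing care is the counting: fixing $\gamma_1$ in $n$ ways, each subsequent $\gamma_{i}$ lies within $\ell(n)$ of the previous one, and there are boundedly many ways to assign $\{\gamma_i\}$ to $(i_1,i_2,j_1,j_2)$. Together these give the claimed $O(n\ell(n)^3)$ quadruples. If the early return bound were too weak, one could instead use the Frostman bound. In that case one would condition as in \eqref{eq:final conditioning} and bound $\mu(B(T^{i_1}x,r_n))\lesssim r_n^{\tilde F_\mu}$. The early return route already suffices here, however.
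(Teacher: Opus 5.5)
Your proposal is correct and follows essentially the same route as the paper. Both count $n^{1+o(1)}$ no-gap quadruples, bound each integrand by $\mu(A_{r_n}(j_1-i_1))\lesssim n^{-\beta\tilde D_\mu}$ after dropping the second indicator and using $T$-invariance, and conclude via $1-\beta\tilde D_\mu<4-2\beta C_\mu$, which follows from $\beta(C_\mu-\tilde D_\mu)<1$ and $\beta C_\mu<2$.
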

 \begin{proof}
 	We start our proof by introducing two estimates. The first is the following straightforward counting bound that follows from our choice of $\ell$: $$\#\{(i_1, i_2, j_1, j_2):(i_1, i_2, j_1, j_2) \textrm{ has no gaps}\}\lesssim n^{1+o(1)}.$$ The second estimate is the following integral bound:
 	$$\int \1_{B(T^{i_1}(x), r_n)} (T^{j_1}x) \1_{B(T^{i_2}(x), r_n)} (T^{j_2}x)~d\mu(x)\lesssim n^{-\beta \tilde{D}_{\mu}}.$$ Recalling that $i_{1}<j_{1},$ we now see that this bound is a consequence of the following inequalities:
 	\begin{align*}
 		&\int \1_{B(T^{i_1}(x), r_n)} (T^{j_1}x) \1_{B(T^{i_2}(x), r_n)} (T^{j_2}x)~d\mu(x)\\
 		&\quad\leq \int \1_{B(T^{i_1}(x), r_n)} (T^{j_1}x)~d\mu(x)\\
 		&\quad= \int \1_{A_{r_{n}}(j_{1}-i_{1})}(T^{i_{1}}(x))~d\mu(x)\\
 		&\quad=\int \1_{A_{r_{n}}(j_{1}-i_{1})}(x)~d\mu(x)\\
 		&\quad =\mu(A_{r_{n}}(j_{1}-i_{1})) \lesssim n^{-\beta \tilde{D}_{\mu}}.
 	\end{align*} Combining these two estimates yields:
 	\begin{align*}
 		&\sum_{\stackrel{(i_1, i_2, j_1, j_2)}{(i_1, i_2, j_1, j_2) \textrm{ has no gaps}}}\int \1_{B(T^{i_1}(x), r_n)} (T^{j_1}x) \1_{B(T^{i_2}(x), r_n)} (T^{j_2}x)~d\mu(x)\\
 		&\hspace{7cm} \lesssim n^{1-\beta \tilde{D}_{\mu}+o(1)}.
 		\end{align*} The existence of $\epsilon_{3}$ now follows from the inequality $4-2\beta C_{\mu}>1-\beta \tilde{D}_{\mu}$. This inequality holds because of our assumptions $1>\beta(C_{\mu}-\tilde{D}_{\mu})$ and $2>\beta C_{\mu}.$
 \end{proof}
 
  \subsubsection{The one gap case}
 The following proposition considers the case when the quadruple of indices has one gap.
 
 \begin{proposition}
\label{prop:one gap prop}
There exists $\epsilon_{4}>0$ such that for any $s>0$ we have
\begin{align*}
	&\sum_{\substack{(i_1, i_2, j_1, j_2)\\ (i_1, i_2, j_1, j_2) \textrm{ has } \\ \textrm{ one gap}}}\int \1_{B(T^{i_1}(x), r_n)} (T^{j_1}x) \1_{B(T^{i_2}(x), r_n)} (T^{j_2}x)~d\mu(x)
	\lesssim  n^{4-2\beta C_{\mu}-\epsilon_{4}}.
\end{align*}	  
 \end{proposition}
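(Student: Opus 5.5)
The plan is to split the one gap quadruples according to how the four ordered indices $\gamma_1\le\gamma_2\le\gamma_3\le\gamma_4$ fall into two \emph{clusters}: the block before the unique gap and the block after it. Inside a cluster, consecutive indices are within $\ell(n)$ of each other, and the two clusters are at least $\ell(n)$ apart. For a split of sizes $(1,3)$, $(2,2)$ or $(3,1)$, the number of quadruples is at most $n^{2}\ell(n)^{2}=n^{2+o(1)}$: there are $n$ choices for the start of each cluster and $\ell(n)$ choices for each remaining offset. So it suffices to bound each integral by $n^{-2\beta C_\mu+2-\epsilon}$ for some fixed $\epsilon>0$, or to use a finer product bound where that is needed. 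I distinguish two situations: some pair $(i_a,j_a)$ straddles the gap, or both pairs lie within single clusters.

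\textbf{Straddling case.} Suppose some pair $(i_a,j_a)$ has $i_a$ in the first cluster and $j_a$ in the second, so that $j_a-i_a\ge\ell(n)$. This always happens for $(1,3)$ and $(3,1)$ splits, and for $(2,2)$ splits other than Cases A and D. I drop the other indicator and apply the right hand side of \eqref{eq:final conditioning}. Exponential mixing for $BV$ against $L^\infty$ (or $4$-mixing with two trivial factors), together with $T$-invariance, then gives
\[
\int \1_{B(T^{i_a}x,r_n)}(T^{j_a}x)\,d\mu\le \sum_{k}\mu(I_k)\mu(B_k^+)+\mathcal{O}(2^m\e^{-\theta\ell(n)})\lesssim n^{-\beta C_\mu+o(1)},
\]
where the last step uses Lemma~\ref{lem:Riem1}. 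The total contribution is then $n^{2-\beta C_\mu+o(1)}$. Since $2-\beta C_\mu<4-2\beta C_\mu$ is exactly $\beta C_\mu<2$, this is $\lesssim n^{4-2\beta C_\mu-\epsilon}$ for small $\epsilon$.

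\textbf{Non-straddling case.} The remaining configuration is a $(2,2)$ split with $i_1<j_1$ in one cluster and $i_2<j_2$ in the other (Cases A and D). Here a single early return bound $n^{-\beta\tilde D_\mu}$ is not enough, because it would require $2\beta C_\mu-\beta\tilde D_\mu<2$. I will therefore gain from both pairs. Applying \eqref{eq:final conditioning} to both indicators gives a double sum over $k,l$ of integrals of
\[
\1_{I_k}(T^{i_1}x)\,\1_{B_k^+}(T^{j_1}x)\,\1_{I_l}(T^{i_2}x)\,\1_{B_l^+}(T^{j_2}x),
\]
all of which are indicators of intervals. Exponential $4$-mixing for intervals applies with the gap $i_2-j_1\ge\ell(n)$, and after invariance it factorises each term into the product of $\int\1_{I_k}\cdot\1_{B_k^+}\circ T^{j_1-i_1}\,d\mu$ and the analogous $l$-integral, up to an error $\mathcal{O}(4^m\e^{-\theta'\ell(n)})$, which is superpolynomially small. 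Summing over $k$, the first factor is at most $\mu(A_{r_n+2^{1-m}}(j_1-i_1))\lesssim n^{-\beta\tilde D_\mu}$, since $2^{-m}\le r_n$; the same holds for the $l$-sum. This case therefore contributes $n^{2-2\beta\tilde D_\mu+o(1)}$. This is $<n^{4-2\beta C_\mu-\epsilon}$ precisely because $\beta(C_\mu-\tilde D_\mu)<1$.

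Taking $\epsilon_4$ as the minimum of the two margins gives the result, and neither margin depends on $s$. I expect the main obstacle to be the non-straddling case. One has to check that the conditioning of \eqref{eq:final conditioning} turns the product into indicators of intervals in the right order for the $4$-mixing hypothesis ($0\le a<b\le c$ with separation $b-a$). One also has to check that summing the factorised terms over $k$ recovers the measure of a slightly enlarged early return set $A_{r}(\cdot)$, so that the early return bound can be applied to each factor.
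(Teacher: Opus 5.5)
Your proposal is correct and follows the paper's proof essentially step for step. Your straddling case is the paper's Cases B and C together with the subcase of Case A where $j_1-i_1\ge\ell(n)$ or $j_2-i_2\ge\ell(n)$; your non-straddling case is the Case A subcase $i_2-j_1\ge\ell(n)$, handled by the same $4$-mixing factorisation, early return bound and the two conditions $\beta C_\mu<2$ and $\beta(C_\mu-\tilde D_\mu)<1$. The only difference is harmless and slightly sharper: you use disjointness of the $I_k$ to get $\sum_k\1_{I_k}(x)\1_{B_k^+}(T^px)\le\1_{A_{r_n+2^{1-m}}(p)}(x)$ directly, which avoids the $n^{\beta\epsilon}$ factor the paper incurs via \eqref{eq:counting bounds}.
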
 
 
 \begin{proof}
 We begin by stating the counting bound that follows from our choice of function $\ell$:
 $$\#\{(i_1, i_2, j_1, j_2):(i_1, i_2, j_1, j_2) \textrm{ has one gap}\}\lesssim n^{2+o(1)}.$$ It remains to show that if $(i_1, i_2, j_1, j_2)$ has one gap then 
 \begin{equation}
 	\label{eq:Want to show}
 \int \1_{B(T^{i_1}(x), r_n)} (T^{j_1}x) \1_{B(T^{i_2}(x), r_n)} (T^{j_2}x)~d\mu(x)\lesssim n^{2-2\beta C_{\mu}-\epsilon'}
 \end{equation} for some $\epsilon'>0$ that does not depend on $s$. We will do this via a case analysis. We will start by studying Cases B and C. 
 
 \noindent \textbf{Cases B and C. } In these two cases we have either $i_{1}\leq i_{2}\leq j_{1}\leq j_{2}$ or $i_1\leq i_{2}\leq j_{2}\leq j_1.$ Now notice that it follows from these inequalities that if we have one gap, then we must have either $j_{1}-i_{1}\geq \ell(n)$ or $j_{2}-i_{2}\geq \ell(n)$. Let us assume $j_{1}-i_{1}\geq \ell(n).$ The other subcase is handled similarly. Let $\epsilon>0$ and $m$ be defined by \eqref{eq:define m}. Then by Lemma \ref{lem:Riem1}, \eqref{eq:final conditioning}, \eqref{eq:trivial counting bounds}, exponential $4$-mixing on intervals, and the $T$-invariance of $\mu,$ we have 
 \begin{align*}
 	&\int \1_{B(T^{i_1}(x), r_n)} (T^{j_1}x) \1_{B(T^{i_2}(x), r_n)} (T^{j_2}x)~d\mu(x)\\
 	&\quad\leq \sum_{k=0}^{2^{m}-1}\sum_{l=0}^{2^{m}-1}\int   \1_{I_{k}}(x)\cdot \1_{I_{l}}(T^{i_{2}-i_{1}}x)\cdot \1_{B_{k}^{+}}(T^{j_{1}-i_{1}}x)\cdot \1_{B_{l}^{+}}(T^{j_{2}-i_{1}}x) ~d\mu(x)\\
 	&\quad\leq \sum_{k=0}^{2^{m}-1}\sum_{l=0}^{2^{m}-1}\int   \1_{I_{k}}(x)\cdot \1_{I_{l}}(T^{i_{2}-i_{1}}x)\cdot \1_{B_{k}^{+}}(T^{j_{1}-i_{1}}x) ~d\mu(x)\\
 	&\quad= \sum_{k=0}^{2^{m}-1}\int   \1_{I_{k}}(x)\cdot \1_{B_{k}^{+}}(T^{j_{1}-i_{1}}x)~d\mu(x)\\
 	&\quad\lesssim \sum_{k=0}^{2^{m}-1}\mu(I_{k})\mu(B_{k}^{+})+\mathcal{O}(2^{m}\e^{-\theta\ell(n)})=\mathcal{O}( n^{-\beta C_{\mu}+o(1)}).
 \end{align*}
 It follows from the above and our assumption $\beta C_{\mu}<2,$ which implies $-\beta C_{\mu}<2-2\beta C_{\mu},$ that there exists $\epsilon_{1}'$ such that for any $s>0$ we have
 $$\int \1_{B(T^{i_1}(x), r_n)} (T^{j_1}x) \1_{B(T^{i_2}(x), r_n)} (T^{j_2}x)~d\mu(x)\lesssim n^{2-2\beta C_{\mu}-\epsilon_{1}'}.$$
 
\noindent \textbf{Case A. } Recall that in this case we have $i_1\leq j_1\leq i_2\leq j_2$. If $j_{1}-i_{1}\geq \ell(n)$ or $j_{2}-i_{2}\geq \ell(n)$ then  the arguments given for Cases B and C above can be replicated to obtain the required bound. It remains to consider the subcase where $i_{2}-j_{1}\geq \ell(n)$. In this case, the following is a consequence of \eqref{eq:final conditioning}, $T$-invariance of $\mu$, exponential $4$-mixing on intervals and \eqref{eq:counting bounds}. Let $\epsilon>0$ and $m$ be defined by \eqref{eq:define m}, then we have
\begin{align*}
	&\int \1_{B(T^{i_1}(x), r_n)} (T^{j_1}x) \1_{B(T^{i_2}(x), r_n)} (T^{j_2}x)~d\mu(x)\\
	&\quad\leq \sum_{k=0}^{2^{m}-1}\sum_{l=0}^{2^{m}-1}\int   \1_{I_{k}}(x)\cdot  \1_{B_{k}^{+}}(T^{j_{1}-i_{1}}x)\cdot \1_{I_{l}}(T^{i_{2}-i_{1}}x) \cdot \1_{B_{l}^{+}}(T^{j_{2}-i_{1}}x) ~d\mu(x)\\
		&\quad\leq \sum_{k=0}^{2^{m}-1}\sum_{l=0}^{2^{m}-1}\int   \1_{I_{k}}(x)\cdot  \1_{B_{k}^{+}}(T^{j_{1}-i_{1}}x)~d\mu(x)\int \1_{I_{l}}(x) \cdot \1_{B_{l}^{+}}(T^{j_{2}-i_{2}}x) ~d\mu(x)\\
		&\hspace{8cm} +\mathcal{O}(4^{m}\e^{-\theta\ell(n)})\\
		&\quad\leq \sum_{k=0}^{2^{m}-1}\sum_{l=0}^{2^{m}-1}\int   \1_{A_{4r_{n}}(j_{1}-i_{1})}(x)\cdot  \1_{B_{k}^{+}}(T^{j_{1}-i_{1}}x)~d\mu(x) \\
		&\hspace{2cm} \times \int \1_{A_{4r_{n}}(j_{2}-i_{2})}(x)\cdot \1_{B_{l}^{+}}(T^{j_{2}-i_{2}}x) ~d\mu(x)+\mathcal{O}(4^{m}\e^{-\theta\ell(n)})\\
			&\quad\leq n^{2\beta\epsilon}\int   \1_{A_{4r_{n}}(j_{1}-i_{1})}(x) ~d\mu(x)\int \1_{A_{4r_{n}}(j_{2}-i_{2})}(x)~d\mu(x) +\mathcal{O}(4^{m}\e^{-\theta\ell(n)})\\
			&\quad\lesssim n^{2\beta\epsilon -2\beta \tilde{D}_{\mu}}.
		\end{align*}
In the final line we have used that there exists $C>0$ such $\mu(A_{r}(n))\leq Cr^{\tilde{D}_{\mu}}$ for all $r>0$ and $n\in \N$, and the fact our error term decays to zero faster than any polynomial. It is a consequence of our assumption $1>\beta(C_{\mu}-\tilde{D}_{\mu})$ that if we choose $\epsilon$ sufficiently small then $2\beta\epsilon -2\beta \tilde{D}_{\mu}<2-2\beta C_{\mu}$. Therefore it follows from the above that in this case there exists $\epsilon_{2}'>0$ such that for any $s>0$ we have 		
  $$\int \1_{B(T^{i_1}(x), r_n)} (T^{j_1}x) \1_{B(T^{i_2}(x), r_n)} (T^{j_2}x)~d\mu(x)\lesssim n^{2-2\beta C_{\mu}-\epsilon_{2}'}.$$
 
\eqref{eq:Want to show} can be verified for Cases D, E and F via a similar argument and appealing to the symmetry between these cases and Cases A, B and C.

 We've shown that \eqref{eq:Want to show} holds for some $\epsilon'$ for all $s>0$ in all cases, taking the minimum of these $\epsilon'$ yields a uniform estimate and we can conclude that \eqref{eq:Want to show} holds. This completes our proof.
 \end{proof}
 
  \subsubsection{The two gaps case}
 We now consider the case when we have two gaps between our indices. We split our analysis into two cases across the following two propositions. 
 
 We begin by considering the case where there are two gaps between our indices and one of these gaps is between the two smallest indices, i.e., $\gamma_{2}-\gamma_{1}\geq \ell(n)$.
 \begin{proposition}
 	\label{prop:gap first indicies}
Let $(i_1,i_{2},j_{1},j_{2})$ have two gaps and $\gamma_{2}-\gamma_{1}\geq \ell(n)$, then there exists $\epsilon_5>0$ such that for any $s>0$ we have 
\begin{align*}
	&\int \1_{B(T^{i_1}(x), r_n)} (T^{j_1}x) \1_{B(T^{i_2}(x), r_n)} (T^{j_2}x)~d\mu(x)\lesssim n^{1-2\beta C\mu -\epsilon_{5}}.
\end{align*}
 \end{proposition}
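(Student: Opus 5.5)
The plan is to exploit the isolation of the smallest index $\gamma_1$, which is at distance at least $\ell(n)$ from the other three indices. Conditioning on the dyadic cell containing its orbit point and applying mixing turns it into a factor that averages out. The remaining three indices contain exactly one more gap, and this determines which of the hypotheses $\beta(C_\mu-\tilde F_\mu)<1$ or $\beta(C_\mu-\tilde D_\mu)<1$ we use. Since $i_1<j_1$ and $i_2<j_2$, we must have $\gamma_1\in\{i_1,i_2\}$. By symmetry (relabel the two pairs) I assume $\gamma_1=i_1$, so $\{\gamma_2,\gamma_3,\gamma_4\}=\{j_1,i_2,j_2\}$. Fix $\epsilon>0$ small, define $m$ by \eqref{eq:define m}, and use the right-hand inequality in \eqref{eq:final conditioning} for the pair $(i_1,j_1)$ only. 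This bounds the integral by
$$\sum_{k=0}^{2^m-1}\int \1_{I_k}(x)\,\1_{B_k^+}(T^{j_1-i_1}x)\,\1_{B(T^{i_2-i_1}x,r_n)}(T^{j_2-i_1}x)\,d\mu(x),$$
after using $T$-invariance to shift by $i_1$.

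\textbf{Subcase 1: $j_2-i_2\ge \ell(n)$.} Here I also condition the pair $(i_2,j_2)$ on cells $I_l$, which makes every factor an indicator of an interval. Since $\gamma_2-\gamma_1\ge\ell(n)$, exponential mixing for $BV$ against $L^\infty$ separates $\1_{I_k}$ from the rest, giving $\mu(I_k)$ times the remaining integral plus $\mathcal{O}(4^m\e^{-\theta\ell(n)})$. Pulling out $\sum_k\mu(I_k)\1_{B_k^+}(y)\le \mu(B(y,2r_n))\lesssim r_n^{\tilde F_\mu}$ uniformly in $y$ leaves $\sum_l\int \1_{I_l}(x)\1_{B_l^+}(T^{j_2-i_2}x)\,d\mu$. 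Because $j_2-i_2\ge\ell(n)$, this is $\lesssim n^{-\beta C_\mu+o(1)}$ exactly as in Cases B and C of Proposition \ref{prop:one gap prop}, using Lemma \ref{lem:Riem1}. The total is $n^{-\beta(\tilde F_\mu+C_\mu)+o(1)}$, and this is $\le n^{1-2\beta C_\mu-\epsilon'}$ precisely because $\beta(C_\mu-\tilde F_\mu)<1$.

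\textbf{Subcase 2: $j_2-i_2<\ell(n)$.} Then $i_2,j_2$ are adjacent in the ordering with no gap between them, so the second gap must separate $j_1$ from the block $\{i_2,j_2\}$. Either $j_1=\gamma_2$ with $\gamma_3-\gamma_2\ge\ell(n)$, or $j_1=\gamma_4$ with $\gamma_4-\gamma_3\ge\ell(n)$. I again condition $(i_2,j_2)$ on cells $I_l$ and use exponential $4$-mixing for intervals to split off the block $\{i_2,j_2\}$. In the first alternative it splits as $\int\1_{I_k}\1_{B_k^+}\circ T^{j_1-i_1}\cdot\int\1_{I_l}\1_{B_l^+}\circ T^{j_2-i_2}$ with error $\mathcal O(4^m\e^{-\theta\ell(n)})$. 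In the second alternative $\{i_2,j_2\}$ sits between $i_1$ and $j_1$, so I first use the gap $\gamma_2-\gamma_1$ to factor out $\mu(I_k)$, and then the gap $\gamma_4-\gamma_3$ with mixing to factor out $\mu(B_k^+)$. In both alternatives the $k$-sum, after another application of mixing across $j_1-i_1\ge\ell(n)$, gives $\sum_k\mu(I_k)\mu(B_k^+)\lesssim n^{-\beta C_\mu+o(1)}$ by Lemma \ref{lem:Riem1}. For the $l$-sum, \eqref{eq:counting bounds} and $I_l\subset B(T^{j_2-i_2}x,\,4r_n)$ on the relevant set give $\lesssim n^{\beta\epsilon}\mu(A_{4r_n}(j_2-i_2))\lesssim n^{\beta\epsilon-\beta\tilde D_\mu}$, as in Case A of Proposition \ref{prop:one gap prop}. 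The total is $n^{-\beta(C_\mu+\tilde D_\mu)+\beta\epsilon+o(1)}$, which is $\le n^{1-2\beta C_\mu-\epsilon'}$ for $\epsilon$ small since $\beta(C_\mu-\tilde D_\mu)<1$.

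The main obstacle is the nested ordering in Subcase 2 when $j_1=\gamma_4$. There the two pairs interleave, so the mixing steps must be carried out in the right order, and 4-mixing must be applied to a configuration of the form $\1_{I_k}\cdot(\text{block})\cdot\1_{B_k^+}$ rather than two consecutive blocks. If this is awkward, I would first use the uniform bound $\mu(B(y,2r_n))\lesssim r_n^{\tilde F_\mu}$ only for the outer pair and compare the exponents again. In every subcase the error terms $4^m\e^{-\theta\ell(n)}$ decay faster than any polynomial, and all exponents are independent of $s$. Taking $\epsilon_5$ as the minimum over the finitely many subcases finishes the proof.
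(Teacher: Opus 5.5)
Your proposal is correct and is essentially the paper's argument: your Subcase~2 is exactly the paper's Case~2, an early-return bound for the close pair $(i_2,j_2)$. There the nested ordering is handled, as you propose, by factoring out $\mu(I_k)$ via the gap $\gamma_2-\gamma_1$ and then $\mu(B_k^+)$ via $4$-mixing on the remaining triple product, so no fallback is needed; indeed the fallback you sketch would require $\beta(2C_\mu-\tilde F_\mu-\tilde D_\mu)<1$, which does not follow from the hypotheses. Your Subcase~1 merges the paper's Cases~1 and~3 into one step by bounding $\sum_k\mu(I_k)\1_{B_k^+}(y)\le\mu(B(y,2r_n))\lesssim r_n^{\tilde F_\mu}$ and then mixing across $j_2-i_2\ge\ell(n)$, which is slightly cleaner than the paper's separate Frostman bounds on $\mu(I_l)$ and $\mu(B_l^+)$ (it needs no $4$-mixing and no $n^{\beta\epsilon}$ counting loss) and gives the same exponent $-\beta(\tilde F_\mu+C_\mu)+o(1)$.
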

 
 \begin{proof}  For concreteness let us suppose we are in one of Case A, Case B or Case C so $\min \{i_{1},i_{2},j_{1},j_{2}\}=i_{1}$. The other cases are handled similarly. Let $\epsilon>0$ and $m$ be defined by \eqref{eq:define m}. By $T$-invariance and \eqref{eq:final conditioning} we have
 	\begin{align*}&\int \1_{B(T^{i_1}(x), r_n)} (T^{j_1}x) \1_{B(T^{i_2}(x), r_n)} (T^{j_2}x)~d\mu(x)\\
 		&\hspace{0.2cm}\leq \sum_{k=0}^{2^m-1}\sum_{l=0}^{2^{m}-1} \int  \1_{I_{k}}(x)\cdot \1_{I_{l}}(T^{i_{2}-i_1}x)\cdot \1_{B_{k}^{+}}(T^{j_{1}-i_1}x)\cdot \1_{B_{l}^{+}}(T^{j_{2}-i_1}x) ~d\mu(x).
 	\end{align*}
 By exponential mixing for BV against $L^{\infty}$, $T$-invariance and the inequality $i_{2}\leq j_2$, we can find $a,b,c\geq 0$ such that either $a=0$ or $b=0$ (depending on which of $i_2$ and $j_1$ is larger) and 
 \begin{align*}
 	&\sum_{k=0}^{2^m-1}\sum_{l=0}^{2^{m}-1} \int  \1_{I_{k}}(x)\cdot \1_{I_{l}}(T^{i_{2}-i_1}x)\cdot \1_{B_{k}^{+}}(T^{j_{1}-i_1}x)\cdot \1_{B_{l}^{+}}(T^{j_{2}-i_1}x) ~d\mu(x)\\
 	&\leq \sum_{k=0}^{2^m-1}\sum_{l=0}^{2^{m}-1} \mu(I_k)\int \1_{I_l}(T^{a}x)\cdot \1_{B_{k}^{+}}(T^{b}x)\cdot \1_{B_{l}^{+}}(T^{c}x) ~d\mu(x)+\mathcal{O}(4^{m}\e^{-\theta \ell(n)}).
 \end{align*}
 
 We now want to bound the integral over the triple product appearing in this summation.  An application of exponential $4$-mixing on intervals allows us to bound this integral from above by a product of two integrals plus some small error. This product over two integrals is of the form where one integral is over one of our indicator functions and the other is over the product of our two remaining indicator functions. The two indicator functions appearing in this latter integral correspond to the successive indices that are close, i.e.\ the $\gamma_{i},\gamma_{i+1}$ satisfying $|\gamma_{i+1}-\gamma_{i}|<\ell(n)$. Consequently after an application of our  exponential $4$-mixing of intervals assumption and $T$-invariance there are three possible forms the resulting bound can take:
 \begin{itemize}
 \item\textbf{ Case 1:} There exist $d,e\geq 0$ such that either $d=0$ or $e=0$ and
 	\begin{align*}
 		&\sum_{k=0}^{2^m-1}\sum_{l=0}^{2^{m}-1} \mu(I_k)\int \1_{I_l}(T^{a}(x))\cdot \1_{B_{k}^{+}}(T^{b}x)\cdot \1_{B_{l}^{+}}(T^{c}x) ~d\mu(x)
		\\
 		&\leq  \sum_{k=0}^{2^m-1}\sum_{l=0}^{2^{m}-1} \mu(I_k)\mu(I_{l})\int  \1_{B_{k}^{+}}(T^{d}(x))\cdot \1_{B_{l}^{+}}(T^{e}(x)) ~d\mu(x)+ \mathcal{O}(4^{m}\e^{-\theta\ell(n)}).
 	\end{align*}
\item \textbf{Case 2:} There exist $d,e\geq 0$ such that either $d=0$ or $e=0$ and
 	\begin{align*}
 		&\sum_{k=0}^{2^m-1}\sum_{l=0}^{2^{m}-1} \mu(I_k)\int \1_{I_l}(T^{a}(x))\cdot \1_{B_{k}^{+}}(T^{b}x)\cdot \1_{B_{l}^{+}}(T^{c}x) ~d\mu(x)
		\\
 		&\leq  \sum_{k=0}^{2^m-1}\sum_{l=0}^{2^{m}-1} \mu(I_k)\mu(B_{k}^{+})\int  \1_{I_{l}}(T^{d}(x))\cdot \1_{B_{l}^{+}}(T^{e}(x)) ~d\mu(x)+\mathcal{O}(4^{m}\e^{-\theta\ell(n)}).
 		\end{align*}
\item \textbf{Case 3:} There exist $d,e\geq 0$ such that either $d=0$ or $e=0$ and
 	\begin{align*}
 			&\sum_{k=0}^{2^m-1}\sum_{l=0}^{2^{m}-1} \mu(I_k)\int \1_{I_l}(T^{a}(x))\cdot \1_{B_{k}^{+}}(T^{b}x)\cdot \1_{B_{l}^{+}}(T^{c}x) ~d\mu(x)
			\\
 		&\leq\sum_{k=0}^{2^m-1}\sum_{l=0}^{2^{m}-1} \mu(I_k)\mu(B_{l}^{+})\int  \1_{B_{k}^{+}}(T^{d}(x))\cdot \1_{I_{l}}(T^{e}(x)) ~d\mu(x)
 		+\mathcal{O}(4^{m}\e^{-\theta\ell(n)}).
 		\end{align*}
 \end{itemize}
 We now consider each one of these cases in turn.
 
 	\textbf{Case 1.}  It follows from the definition of $m$ and our Frostman parameter $\tilde{F}_{\mu}$ that $\mu(I_{l})\lesssim n^{-\beta \tilde{F}_{\mu}}$ for all $0\leq l\leq 2^{m}-1$. Using this bound together with the $T$-invariance of $\mu$, \eqref{eq:counting bounds} and Lemma \ref{lem:Riem1} we see that the following holds:
 	\begin{align*}
 		&\sum_{k=0}^{2^m-1}\sum_{l=0}^{2^{m}-1} \mu(I_k)\mu(I_{l})\int  \1_{B_{k}^{+}}(T^{d}(x))\cdot \1_{B_{l}^{+}}(T^{e}(x)) ~d\mu(x)\\
 		&\quad\lesssim n^{\beta\epsilon-\beta \tilde{F}_{\mu}}\sum_{k=0}^{2^{m}-1}\mu(I_k)\int  \1_{B_{k}^{+}}(x) ~d\mu(x)\\
 		&\quad \lesssim n^{\beta\epsilon-\beta F_{\mu}}\sum_{k=0}^{2^{m}-1}\mu(I_k)\mu(B_{k}^{+}) \\
 		&\quad \lesssim  n^{\beta \epsilon-\beta \tilde{F}_{\mu}-\beta C_{\mu}+o(1)}.
 		\end{align*}
Using our assumption that $\beta(C_{\mu}-\tilde{F}_{\mu})<1,$ which is equivalent to $-\beta \tilde{F}_{\mu}-C_{\mu}<1-2\beta C_{\mu}$, and that $4^{m}\e^{-\theta\ell(n)}$ decays to zero faster than any polynomial, we see that if we choose our original parameter $\epsilon$ to be sufficiently small then there exists $\epsilon_{1}'>0$ such that in this case, for any $s>0$
 $$\int \1_{B(T^{i_1}(x), r_n)} (T^{j_1}x) \1_{B(T^{i_2}(x), r_n)} (T^{j_2}x)~d\mu(x)\lesssim n^{1-2\beta C\mu -\epsilon_{1}'}.$$

 \textbf{Case 2}. In this case it follows from the inequality $i_{2}\leq j_{2}$ that $d=0$. Using Lemma \ref{lem:Riem1}, our early returns assumption, and \eqref{eq:counting bounds} the following holds: 
 \begin{align*}
 	&\sum_{k=0}^{2^m-1}\sum_{l=0}^{2^{m}-1} \mu(I_k)\mu(B_{k}^{+})\int  \1_{I_{l}}(T^{d}(x))\cdot \1_{B_{l}^{+}}(T^{e}(x)) ~d\mu(x)\\
 	&\quad= \sum_{k=0}^{2^{m}-1}\mu(I_k)\mu(B_{k}^{+})\sum_{l=0}^{2^{m}-1}\int  \1_{I_{l}}(x)\cdot \1_{B_{l}^{+}}(T^{e}(x)) ~d\mu(x)\\
 	&\quad\lesssim n^{-\beta C_{\mu}+o(1)}\sum_{l=0}^{2^{m}-1}\int  \1_{I_{l}}(x)\cdot \1_{B_{l}^{+}}(T^{e}(x)) ~d\mu(x)\\
 	&\quad\leq n^{-\beta C_{\mu}+o(1)}\sum_{l=0}^{2^{m}-1}\int  \1_{A_{4r_{n}}(e)}(x)\cdot \1_{B_{l}^{+}}(T^{e}(x)) ~d\mu(x)\\
 		&\quad\lesssim n^{\beta\epsilon-\beta C_{\mu}+o(1)}\int  \1_{A_{4r_{n}}(e)}(x) ~d\mu(x)\\
 		&\quad \lesssim n^{\beta\epsilon -\beta \tilde{D}_{\mu}-\beta C_{\mu}+o(1)}.
 \end{align*}
 Now using that our error terms decays to zero faster than any polynomial, and our assumption $\beta(C_{\mu}-\tilde{D}_{\mu})<1$, which is equivalent to $-\beta \tilde{D}_{\mu}-\beta C_{\mu}<1-2\beta C_{\mu}$, it follows that if we choose our parameter $\epsilon$ to be sufficiently small then  in this case there exists $\epsilon_{2}'>0$ such that for any $s>0$ we have
  $$\int \1_{B(T^{i_1}(x), r_n)} (T^{j_1}x) \1_{B(T^{i_2}(x), r_n)} (T^{j_2}x)~d\mu(x)\lesssim n^{1-2\beta C\mu -\epsilon_{2}'}.$$
 \noindent \textbf{Case 3.} In this case we will use the Frostman bound $\mu(B_{l}^{+})\lesssim n^{-\beta \tilde{F}_{\mu}}.$ Using this bound, \eqref{eq:trivial counting bounds} Lemma \ref{lem:Riem1}, the $T$-invariance of $\mu$, and duplicating the analysis given is Case $1,$ we obtain:
  \begin{align*}
 	&\int \1_{B(T^{i_1}(x), r_n)} (T^{j_1}x) \1_{B(T^{i_2}(x), r_n)} (T^{j_2}x)~d\mu(x)\lesssim n^{-\beta \tilde{F}_{\mu}-\beta C_{\mu}+o(1)}.
 \end{align*} Now using our assumption $\beta(C_{\mu}-\tilde{F}_{\mu})<1,$ which is equivalent to $-\beta \tilde{F}_{\mu}-\beta C_{\mu}<1-2\beta C_{\mu}$, we see that in this case there exists $\epsilon_{3}'>0$ such that for any $s>0$ we have
 $$\int \1_{B(T^{i_1}(x), r_n)} (T^{j_1}x) \1_{B(T^{i_2}(x), r_n)} (T^{j_2}x)~d\mu(x)\lesssim n^{1-2\beta C_{\mu}-\epsilon_{3}'}.$$
 
 Our result now follows from the three cases considered above by taking $\epsilon_{5}=\min \{\epsilon_{1}',\epsilon_{2}',\epsilon_{3}'\}.$ 
 \end{proof}
The following proposition addresses the remaining case when there are two gaps but the first two indices satisfy $\gamma_{2}-\gamma_{1}<\ell(n)$. 
 
 \begin{proposition}
 	\label{prop:close first indicies}
 Let $(i_{1},i_{2},j_{1},j_{2})$ have two gaps and $\gamma_{2}-\gamma_{1}<\ell(n)$, then there exists $\epsilon_{6}>0$ such that for and $s>0$ we have 
 \begin{align*}
 	&\int \1_{B(T^{i_1}(x), r_n)} (T^{j_1}x) \1_{B(T^{i_2}(x), r_n)} (T^{j_2}x)~d\mu(x)\lesssim n^{1-2\beta C_{\mu}-\epsilon_{6}}.
 \end{align*}
 \end{proposition}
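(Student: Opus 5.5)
The hypothesis forces the gap structure completely: since $\gamma_{2}-\gamma_{1}<\ell(n)$ and there are exactly two gaps, we must have $\gamma_{3}-\gamma_{2}\geq \ell(n)$ and $\gamma_{4}-\gamma_{3}\geq \ell(n)$. So the orbit times form one close pair $\{\gamma_1,\gamma_2\}$ followed by two isolated times. Because $i_1<j_1$ and $i_2<j_2$, $\gamma_1\in\{i_1,i_2\}$. By the symmetry between Cases A--C and D--F, I will assume $\gamma_{1}=i_{1}$. Then the close pair $\{\gamma_1,\gamma_2\}$ is either $\{i_1,j_1\}$, which happens only in Case A, or $\{i_1,i_2\}$, which happens in Cases B and C. As in Proposition \ref{prop:gap first indicies}, I fix $\epsilon>0$ and define $m$ by \eqref{eq:define m}. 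I then use \eqref{eq:final conditioning} and $T$-invariance to bound the integral by
$$\sum_{k,l}\int \1_{I_k}(x)\,\1_{I_l}(T^{i_2-i_1}x)\,\1_{B_k^+}(T^{j_1-i_1}x)\,\1_{B_l^+}(T^{j_2-i_1}x)\,d\mu(x).$$
The goal is the bound $n^{1-2\beta C_\mu-\epsilon_6}$. Every error term will be $\mathcal{O}(4^m\e^{-\theta\ell(n)})$, which decays faster than any polynomial.

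\textbf{Subcase $\{\gamma_1,\gamma_2\}=\{i_1,j_1\}$ (Case A).} Here $i_1\le j_1$ are close, $i_2-j_1\geq\ell(n)$ and $j_2-i_2\geq\ell(n)$. Since $0\le a<b\le c$ with $b-a=i_2-j_1$ large, exponential $4$-mixing for intervals splits the integrand into the product
$$\int \1_{I_k}\cdot\1_{B_k^+}\circ T^{j_1-i_1}\,d\mu \;\times\; \int \1_{I_l}\cdot \1_{B_l^+}\circ T^{j_2-i_2}\,d\mu .$$
In the second factor $j_2-i_2\ge \ell(n)$, so exponential mixing for BV against $L^\infty$ turns it into $\mu(I_l)\mu(B_l^+)$. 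Summing over $l$ and applying Lemma \ref{lem:Riem1} gives at most $n^{-\beta C_\mu+o(1)}$. For the first factor I repeat the argument of Case A in Proposition \ref{prop:one gap prop}: $\1_{I_k}(x)\1_{B_k^+}(T^{j_1-i_1}x)\le \1_{A_{4r_n}(j_1-i_1)}(x)\1_{B_k^+}(T^{j_1-i_1}x)$, and by \eqref{eq:counting bounds} the sum over $k$ is $\lesssim n^{\beta\epsilon}\mu(A_{4r_n}(j_1-i_1))\lesssim n^{\beta\epsilon-\beta\tilde D_\mu}$. The total is $n^{\beta\epsilon-\beta\tilde D_\mu-\beta C_\mu+o(1)}$. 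This is below $n^{1-2\beta C_\mu-\epsilon'}$ for small $\epsilon$, because $\beta(C_\mu-\tilde D_\mu)<1$.

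\textbf{Subcase $\{\gamma_1,\gamma_2\}=\{i_1,i_2\}$ (Cases B, C).} Here both $j_1$ and $j_2$ lie at distance at least $\ell(n)$ after $i_2$ and from each other. I apply $4$-mixing with the large gap between $\gamma_3$ and $\gamma_4$, and then BV-against-$L^\infty$ mixing to separate $\gamma_3$ from the close block. This should give
$$\sum_{k,l}\mu(B_k^+)\mu(B_l^+)\int \1_{I_k}\cdot \1_{I_l}\circ T^{i_2-i_1}\,d\mu+\mathcal{O}(4^m\e^{-\theta\ell(n)}).$$
I then use the Frostman bound $\mu(B_l^+)\lesssim n^{-\beta\tilde F_\mu}$ and sum over $l$ with \eqref{eq:trivial counting bounds}. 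This leaves $n^{-\beta\tilde F_\mu}\sum_k \mu(I_k)\mu(B_k^+)\lesssim n^{-\beta\tilde F_\mu-\beta C_\mu+o(1)}$ by Lemma \ref{lem:Riem1}. This is acceptable because $\beta(C_\mu-\tilde F_\mu)<1$. Taking the minimum of the exponents from the two subcases gives $\epsilon_6$, which is independent of $s$.

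The main obstacle is the mixing bookkeeping in the second subcase. The close block $\1_{I_k}\cdot\1_{I_l}\circ T^{i_2-i_1}$ is a product of two interval indicators, so I must check that the available mixing statements still apply. I would first try to use the $4$-mixing definition directly, with $\1_1=\1_{I_k}$, $\1_2=\1_{I_l}$, $\1_3=\1_{B_k^+}$ or $\1_{B_l^+}$ and $\1_4$ the remaining ball, so that the large gap is $b-a$. The resulting pair $\int\1_3\cdot\1_4\circ T^{c-b}\,d\mu$ then factorizes by BV-against-$L^\infty$ mixing, since $c-b\ge\ell(n)$. The ordering of $j_1$ and $j_2$ only swaps which ball plays which role.
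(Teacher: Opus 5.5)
Your proposal is correct and matches the paper's proof essentially step for step. You use the same case split: Case A, with close pair $\{i_1,j_1\}$, handled by $4$-mixing across $i_2-j_1$, then BV-against-$L^\infty$ mixing on the second factor and the early return bound on the first; and Cases B/C, with close pair $\{i_1,i_2\}$, handled by $4$-mixing across $\gamma_3-\gamma_2$, then mixing of the two balls and a Frostman bound. The differences are cosmetic. In Cases B/C you apply the Frostman bound to $\mu(B_l^+)$ directly and sum over $l$ using $\sum_l \1_{I_l}=1$, where the paper first symmetrises with Cauchy--Schwarz; both give $n^{-\beta\tilde F_\mu-\beta C_\mu+o(1)}$. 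Your final paragraph ($4$-mixing across $b-a=\gamma_3-\gamma_2$, then BV mixing across $c-b=\gamma_4-\gamma_3$) is the correct version of your earlier sentence, which had the order of the two mixing steps reversed.
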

 
 \begin{proof}
For concreteness let us again suppose we are in one of Case A, Case B or Case C so $\min \{i_{1},i_{2},j_{1},j_{2}\}=i_{1}$. The other cases are handled similarly. Let $\epsilon>0$ and $m$ be defined by \eqref{eq:define m}. We start with the familiar bound that follows from the $T$-invariance of $\mu$ and \eqref{eq:final conditioning}:
\begin{align*}&\int \1_{B(T^{i_1}(x), r_n)} (T^{j_1}x) \1_{B(T^{i_2}(x), r_n)} (T^{j_2}x)~d\mu(x)\\
&\hspace{0.5cm}\leq \sum_{k=0}^{2^m-1}\sum_{l=0}^{2^{m}-1} \int  \1_{I_{k}}(x)\cdot \1_{I_{l}}(T^{i_{2}-i_1}x)\cdot \1_{B_{k}^{+}}(T^{j_{1}-i_1}x)\cdot \1_{B_{l}^{+}}(T^{j_{2}-i_1}x) ~d\mu(x).
\end{align*}
We emphasise that it follows from our assumptions that $\gamma_{3}-\gamma_{2}\geq \ell(n)$ and $\gamma_{4}-\gamma_{3}\geq \ell(n)$. We will now apply exponential $4$-mixing on intervals to bound the integrals appearing in the summation above  by the product of two integrals plus some error. Using the fact that $i_{2}< j_{2}$, we see that after an application of $4$-mixing we are left with two possibilities:
\begin{itemize}
	\item \textbf{Case 1:} If $j_{1}\geq i_{2}$ then either $i_{1}\leq i_{2}\leq j_{1}\leq j_{2}$ (Case B) or $i_{1}\leq i_{2}\leq j_{2}\leq j_{1}$ (Case C). In which case there exists $a,b\geq 0$ such that $a=0$ (first subcase) or $b=0$ (second subcase), $|a-b|\geq \ell(n),$ and 
	\begin{align*}
		&\sum_{k=0}^{2^m-1}\sum_{l=0}^{2^{m}-1}\int  \1_{I_{k}}(x)\cdot \1_{I_{l}}(T^{i_{2}-i_1}x)\cdot \1_{B_{k}^{+}}(T^{j_{1}-i_1}x)\cdot \1_{B_{l}^{+}}(T^{j_{2}-i_1}x) ~d\mu(x)\\
		&\leq  \sum_{k=0}^{2^m-1}\sum_{l=0}^{2^{m}-1}\int \1_{I_{k}}(x)\cdot \1_{I_{l}}(T^{i_{2}-i_1}x)~d\mu(x)\int \1_{B_{k}^{+}}(T^{a}x)\cdot \1_{B_{l}^{+}}(T^{b}x) ~d\mu(x)\\
		&\quad + \mathcal{O}(4^{m}\e^{-\theta\ell(n)}).
		\end{align*}
			\item \textbf{Case 2:} If $j_{1}< i_{2}$ then we must be in Case A where $i_{1}\leq j_{1}\leq i_{2}\leq j_{2}$ since we always have $i_{2}\leq  j_{2}$. We also have the bound $j_{2}-i_{2}\geq \ell(n)$ by our underling assumptions.  In this case, an application of exponential $4$-mixing on intervals yields
		\begin{align*}
			&\sum_{k=0}^{2^m-1}\sum_{l=0}^{2^{m}-1}\int   \1_{I_{k}}(x)\cdot\1_{I_{l}}(T^{i_{2}-i_1}x)\cdot \1_{B_{k}^{+}}(T^{j_{1}-i_1}x)\cdot \1_{B_{l}^{+}}(T^{j_{2}-i_1}x) ~d\mu(x)\\
			&\leq \sum_{k=0}^{2^m-1}\sum_{l=0}^{2^{m}-1}  \int  \1_{I_{k}}(x)\cdot \1_{B_{k}^{+}}(T^{j_{1}-i_1}x)~d\mu(x)\int \1_{I_{l}}(x) \cdot \1_{B_{l}^{+}}(T^{j_{2}-i_{2}}x) ~d\mu(x)\\
			&\quad+ \mathcal{O}(4^{m}\e^{-\theta\ell(n)}).
		\end{align*}
\end{itemize}
We consider these two cases in turn. Because the error terms appearing in these two cases decay to zero faster than any polynomial they can be ignored in what follows. As such, we will just focus on bounding the integral terms.

\noindent \textbf{Case 1.} In this case we use our mixing assumption in the second integral and then the Cauchy-Schwarz inequality to obtain: 
\begin{align*}
	&\sum_{k=0}^{2^{m}-1}\sum_{l=0}^{2^{m}-1}\int \1_{I_{k}}(x)\cdot \1_{I_{l}}(T^{i_{2}-i_1}x)~d\mu(x)\int \1_{B_{k}^{+}}(T^{a}x)\cdot \1_{B_{l}^{+}}(T^{b}x) ~d\mu(x)\\
	&\leq \sum_{k=0}^{2^{m}-1}\sum_{l=0}^{2^{m}-1}\mu(B_{k}^{+})\mu(B_{l}^{+})\int \1_{I_{k}}(x)\cdot \1_{I_{l}}(T^{i_{2}-i_1}x)~d\mu(x)+ \mathcal{O}(4^{m}\e^{-\theta\ell(n)})\\
	&\leq \left(\sum_{k=0}^{2^{m}-1}\sum_{l=0}^{2^{m}-1}\mu(B_{k}^{+})^{2}\int \1_{I_{k}}(x)\cdot \1_{I_{l}}(T^{i_{2}-i_1}x)~d\mu(x)\right)^{1/2}\\
	&\quad \times \left(\sum_{k=0}^{2^{m}-1}\sum_{l=0}^{2^{m}-1}\mu(B_{l}^{+})^{2}\int \1_{I_{k}}(x)\cdot \1_{I_{l}}(T^{i_{2}-i_1}x)~d\mu(x)\right)^{1/2} + \mathcal{O}(4^{m}\e^{-\theta\ell(n)}).
\end{align*} 
Now using the Frostman bounds $\mu(B_{k}^{+})\lesssim n^{-\beta \tilde{F}_{\mu}},$ $\mu(B_{l}^{+})\lesssim n^{-\beta \tilde{F}_{\mu}}$ together with \eqref{eq:trivial counting bounds}, and Lemma \ref{lem:Riem1}, it follows from the above that 
\begin{align*}
	&\sum_{k=0}^{2^{m}-1}\sum_{l=0}^{2^{m}-1}\int \1_{I_{k}}(x)\cdot \1_{I_{l}}(T^{i_{2}-i_1}x)~d\mu(x)\int \1_{B_{k}^{+}}(T^{a}x)\cdot \1_{B_{l}^{+}}(T^{b}x) ~d\mu(x)\\
	&\quad\leq  \left(n^{-\beta \tilde{F}_{\mu}}\sum_{k=0}^{2^{m}-1}\mu(I_{k})\mu(B_{k}^{+})\right)^{1/2}	\left(n^{-\beta \tilde{F}_{\mu}}\sum_{l=0}^{2^{m}-1}\mu(I_{l})\mu(B_{l}^{+})\right)^{1/2}\\
	&\qquad+ \mathcal{O}(4^{m}\e^{-\theta\ell(n)})\\
	&\quad\lesssim n^{-\beta \tilde{F}_{\mu}-\beta C_{\mu}+o(1)}+ \mathcal{O}(4^{m}\e^{-\theta\ell(n)})\\
	&\quad \lesssim n^{-\beta \tilde{F}_{\mu}-\beta C_{\mu}+o(1)}.
\end{align*}
In the final line we have used that our error term decays to zero faster than any polynomial.
Now using our assumption $\beta(C_{\mu}-\tilde{F}_{\mu})<1,$ which is equivalent to $-\beta \tilde{F}_{\mu}-\beta C_{\mu}< 1- 2\beta C_{\mu}$, it follows that in this case there exists $\epsilon_{1}'>0$ such that for all $s>0$ we have
\begin{equation}
	\label{eq:Case 1 eps}
\int \1_{B(T^{i_1}(x), r_n)} (T^{j_1}x) \1_{B(T^{i_2}(x), r_n)} (T^{j_2}x)~d\mu(x)\lesssim n^{1-2\beta C_{\mu}-\epsilon_{1}'}.
\end{equation}

\noindent \textbf{Case 2.} Using our mixing assumptions in the second integral, Lemma \ref{lem:Riem1}, our early return assumption, and \eqref{eq:counting bounds} we obtain 
\begin{align*}
	&\sum_{k=0}^{2^{m}-1}\sum_{l=0}^{2^{m}-1}\int  \1_{I_{k}}(x)\cdot \1_{B_{k}^{+}}(T^{j_{1}-i_1}x)~d\mu(x)\int \1_{I_{l}}(x) \cdot \1_{B_{l}^{+}}(T^{j_{2}-i_{2}}x) ~d\mu(x)\\
	&\leq \sum_{k=0}^{2^{m}-1}\sum_{l=0}^{2^{m}-1}\mu(I_{l})\mu(B_{l}^{+})\int  \1_{I_{k}}(x)\cdot \1_{B_{k}^{+}}(T^{j_{1}-i_1}x)~d\mu(x)+
	\mathcal{O}(4^{m}\e^{-\theta\ell(n)})\\
		&\lesssim n^{-\beta C_{\mu}+o(1)}\sum_{k=0}^{2^{m}-1}\int  \1_{I_{k}}(x)\cdot \1_{B_{k}^{+}}(T^{j_{1}-i_1}x)~d\mu(x)+
	\mathcal{O}(4^{m}\e^{-\theta\ell(n)})\\
	&\lesssim n^{-\beta C_{\mu}+o(1)}\sum_{k=0}^{2^{m}-1}\int  \1_{A_{4r_{n}}(j_{1}-i_{1})}(x)\cdot \1_{B_{k}^{+}}(T^{j_{1}-i_1}x)~d\mu(x)+
	\mathcal{O}(4^{m}\e^{-\theta\ell(n)})\\
	&\lesssim n^{\beta \epsilon-\beta C_{\mu}+o(1)}\int  \1_{A_{4r_{n}}(j_{1}-i_{1})}(x)~d\mu(x)+
	\mathcal{O}(4^{m}\e^{-\theta\ell(n)})\\
	&\lesssim n^{\beta \epsilon-\beta \tilde{D}_{\mu}-\beta C_{\mu}+o(1)}+
	\mathcal{O}(4^{m}\e^{-\theta\ell(n)})\\
	&\lesssim n^{\beta \epsilon-\beta \tilde{D}_{\mu}-\beta C_{\mu}+o(1)}.
\end{align*} 
In the final line we used that our error term decays to zero faster than any polynomial. It follows now from our assumption $\beta(C_{\mu}-\tilde{D}_{\mu})<1$, which is equivalent to $-\beta \tilde{D}_{\mu}-\beta C_{\mu}<1-2\beta C_{\mu}$, that if $\epsilon$ is chosen to be sufficiently small, then there exists $\epsilon_{2}'>0$ such that for all $s>0$ we have
\begin{equation}
	\label{eq:Case 2 eps}
\int \1_{B(T^{i_1}(x), r_n)} (T^{j_1}x) \1_{B(T^{i_2}(x), r_n)} (T^{j_2}x)~d\mu(x)\lesssim n^{1-2\beta C_{\mu}-\epsilon_{2}'}.
\end{equation}

Our result now follows from \eqref{eq:Case 1 eps} and \eqref{eq:Case 2 eps} by choosing $\epsilon$ sufficiently small and letting $\epsilon_{5}=\min \{\epsilon_{1}',\epsilon_{2}'\}.$
 \end{proof}
 Combining Propositions \ref{prop:gap first indicies} and \ref{prop:close first indicies} we have the following result.
 
 \begin{proposition}
 	\label{prop:two big gaps prop}
 	There exists $\epsilon_{7}>0$ such that for all $s>0$ we have $$\sum_{\substack{(i_1, i_2, j_1, j_2)\\ (i_1, i_2, j_1, j_2) \textrm{ has } \\ \textrm{ two gaps}}}\int \1_{B(T^{i_1}(x), r_n)} (T^{j_1}x) \1_{B(T^{i_2}(x), r_n)} (T^{j_2}x)~d\mu(x)\lesssim n^{4-2\beta C_{\mu}-\epsilon_{7}}.$$
 \end{proposition}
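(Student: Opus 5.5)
The plan is to combine a counting bound for quadruples with two gaps with the pointwise bounds of Propositions \ref{prop:gap first indicies} and \ref{prop:close first indicies}. Every quadruple $(i_1,i_2,j_1,j_2)$ with two gaps satisfies exactly one of $\gamma_2-\gamma_1\geq \ell(n)$ or $\gamma_2-\gamma_1<\ell(n)$. In either case the integral
$$\int \1_{B(T^{i_1}(x), r_n)} (T^{j_1}x) \1_{B(T^{i_2}(x), r_n)} (T^{j_2}x)~d\mu(x)$$
is bounded by a constant multiple of $n^{1-2\beta C_{\mu}-\epsilon_{5}}$ or $n^{1-2\beta C_{\mu}-\epsilon_{6}}$ respectively. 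The implied constants are uniform over the quadruple, since the error terms $\mathcal{O}(4^m\e^{-\theta\ell(n)})$ and the Frostman and early return constants do not depend on the indices. Setting $\epsilon_7:=\min\{\epsilon_5,\epsilon_6\}$ gives a single pointwise bound $n^{1-2\beta C_{\mu}-\epsilon_7}$ valid for every two-gap quadruple, and $\epsilon_7$ is independent of $s$.

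Next I count the quadruples with two gaps. A two-gap quadruple is determined by its ordered values $\gamma_1\le\gamma_2\le\gamma_3\le\gamma_4$ together with a choice of which of the six orderings (Cases A--F) assigns them to $(i_1,i_2,j_1,j_2)$. Exactly one of the three consecutive differences $\gamma_{i+1}-\gamma_i$ is less than $\ell(n)$. Hence there are at most $n^3$ choices for the three values that are free (the smallest element and the two elements following the big gaps), at most $\ell(n)$ choices for the element within distance $\ell(n)$ of its predecessor, and $\mathcal{O}(1)$ choices of which gap is small and of the ordering case. The number of two-gap quadruples is therefore $\lesssim n^{3}\ell(n)=n^{3+o(1)}$, by the first property of $\ell$ in Definition~\ref{def:ell}.

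Multiplying the two bounds gives
$$\sum_{\substack{(i_1, i_2, j_1, j_2)\\ \textrm{two gaps}}}\int \cdots \lesssim n^{3+o(1)}\cdot n^{1-2\beta C_{\mu}-\epsilon_7}=n^{4-2\beta C_{\mu}-\epsilon_7+o(1)}.$$
Replacing $\epsilon_7$ by $\epsilon_7/2$ absorbs the $n^{o(1)}$ factor coming from $\ell(n)$, which yields the stated estimate with a constant independent of $s$. The only point requiring care is confirming that the constants in Propositions \ref{prop:gap first indicies} and \ref{prop:close first indicies} are uniform in the quadruple, so that summing the pointwise bound is legitimate. This holds because each step there used only the fixed parameters $\epsilon$, $m$, $\tilde F_\mu$, $\tilde D_\mu$ and the mixing constants.
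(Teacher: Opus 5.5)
Your proposal is correct and follows the paper's own argument: the paper also combines the pointwise bounds of Propositions~\ref{prop:gap first indicies} and \ref{prop:close first indicies} with the count of two-gap quadruples, $\lesssim n^{3+o(1)}$, and absorbs the $n^{o(1)}$ factor into $\epsilon_7$. Your check that those pointwise bounds are uniform in the quadruple, and that $\epsilon_7$ does not depend on $s$, makes explicit what the paper leaves implicit. One correction: only $\epsilon_7$ is independent of $s$, while the implied constant may depend on $s$ (through $r_n=s/n^{\beta}$), which the statement allows.
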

 \begin{proof}
 	This result follows immediately from  Propositions \ref{prop:gap first indicies}, \ref{prop:close first indicies} and the following straightforward upper bound that follows from our choice of function $\ell$ (recall Definition \ref{def:ell}):
 	$$\#\{(i_1, i_2, j_1, j_2):(i_1, i_2, j_1, j_2) \textrm{ has two gaps}\}\lesssim n^{3+o(1)}.$$
 \end{proof}
 
  \subsubsection{The not well separated conclusion}
 Now combining Propositions \ref{prop:no gap prop}, \ref{prop:one gap prop} and \ref{prop:two big gaps prop} we can conclude the following statement.
 
\begin{proposition}
	\label{prop:not well separated}
There exists $\epsilon_{8}>0$ such that for all $s>0$ we have
$$\sum_{\substack{(i_1, i_2, j_1, j_2) \\ (i_1, i_2, j_1, j_2) \textrm{ is not }\\ \textrm{ well separated}}}\int \1_{B(T^{i_1}(x), r_n)} (T^{j_1}x) \1_{B(T^{i_2}(x), r_n)} (T^{j_2}x)~d\mu(x)\lesssim n^{4-2\beta C_{\mu}-\epsilon_{8}}.$$
\end{proposition}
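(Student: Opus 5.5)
This follows by decomposing the set of not well separated quadruples according to the trichotomy already established. Every quadruple $(i_1,i_2,j_1,j_2)\in[0,n)^4$ with $i_1<j_1$, $i_2<j_2$ is either well separated, or has no gaps, one gap, or two gaps. Here gaps are measured along the ordered indices $\gamma_1\le\gamma_2\le\gamma_3\le\gamma_4$.

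The first step is to check that this is a genuine partition. Suppose a quadruple has three gaps, i.e.\ $\gamma_{i+1}-\gamma_i\ge\ell(n)$ for $i=1,2,3$. Then every pairwise difference $|\gamma_p-\gamma_q|$ with $p\neq q$ is a sum of consecutive gaps, so it is at least $\ell(n)$. Hence the quadruple is well separated. Consequently, a not well separated quadruple has at most two gaps, so it falls into exactly one of the classes \emph{no gaps}, \emph{one gap} or \emph{two gaps}. The arbitrary choice in defining the $\gamma_i$ when there are ties does not matter, since the integrand is nonnegative and we only need an upper bound.

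Next, split the sum over not well separated quadruples into the three corresponding sums. Bound them, respectively, by Proposition~\ref{prop:no gap prop} (giving $n^{4-2\beta C_\mu-\epsilon_3}$), Proposition~\ref{prop:one gap prop} (giving $n^{4-2\beta C_\mu-\epsilon_4}$) and Proposition~\ref{prop:two big gaps prop} (giving $n^{4-2\beta C_\mu-\epsilon_7}$). Each of these holds uniformly in $s>0$ in the sense used there: the exponents do not depend on $s$, though the implied constants may. Set $\epsilon_8:=\min\{\epsilon_3,\epsilon_4,\epsilon_7\}>0$. Then each of the three sums is $\lesssim n^{4-2\beta C_\mu-\epsilon_8}$, and so is their total.

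No genuine obstacle remains at this stage, since all the analytic work was done in the preceding propositions. The only point requiring care is the observation that "three gaps" coincides with "well separated", which ensures the case analysis is exhaustive.
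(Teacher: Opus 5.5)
Your proposal is correct and is exactly the paper's argument: the paper obtains this proposition simply by combining Propositions~\ref{prop:no gap prop}, \ref{prop:one gap prop} and \ref{prop:two big gaps prop}, which amounts to taking $\epsilon_8=\min\{\epsilon_3,\epsilon_4,\epsilon_7\}$. You also check explicitly that ``three gaps'' coincides with ``well separated'', so the case split is exhaustive; the paper only asserts this, and your verification is correct.
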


\subsection{Proof of Theorem \ref{mainthm:1orb}}
Equipped with the expectation estimates established in the previous section we are now in a position to prove Theorem \ref{mainthm:1orb}. Our proof is similar to the proof of Theorem~\ref{thm:2orbgen}.

\begin{proof}[Proof of Theorem \ref{mainthm:1orb}]
Our proof relies upon showing that there exists $\delta>0$ such that for any $s>0$ we have the following second moment estimate:
\begin{equation}
	\label{eq:delta decay}
\E\left(\left(\frac{S_{n}}{\E(S_{n})}-1\right)^{2}\right)=	\frac{\E(S_{n}^{2})-\E(S_{n})^{2}}{\E(S_{n})^{2}}\lesssim n^{-\delta}.
\end{equation}
Once \eqref{eq:delta decay} is established we can apply the same arguments as used in the proof of Theorem \ref{thm:2orbgen} to complete our proof. In particular, we can apply Markov's inequality and the Borel-Cantelli lemma to show that for a fixed $s>0$ we have the desired convergence for $\mu$-almost every $x$ along some sequence $(n^{K})_{n}$ where $K$ does not depend on $s$. We then use an approximation argument and our assumption that $\mu$ has continuous mean scaling to upgrade this statement to show that for $\mu$-almost every $x,$ for any $s>0$ we have the desired convergence along the integers. The only minor difference between the proofs is that in the proof of Theorem \ref{thm:2orbgen} we use the simple formula for $\E(S_n)$ provided by \eqref{eq:Expectation lower bound two orbit}. Whereas in the one orbit case we have to use the following more complicated formula for $\E(S_n)$ which is a consequence of Proposition \ref{prop:expectation}:
 $$\E(S_n)=n^{2}\int \mu(B(x,r_n))~d\mu(x)+\mathcal{O}(n^{2-\beta C_{\mu}-\epsilon_{1}}).$$ Because the error term is of a lower order than the integral term this causes no significant issues.

We now turn our attention to establishing \eqref{eq:delta decay}. By Proposition \ref{prop:expectation} and \eqref{eq:integral asymptotics}, to show that \eqref{eq:delta decay} holds for some $\delta>0$ for all $s>0$, it is sufficient to show that 
\begin{equation}
	\label{eq:delta decay2}
\frac{\E(S_{n}^{2})-\E(S_{n})^{2}}{n^{4}\left(\int\mu(B(x,r_{n}))~d\mu(x)\right)^{2}}\lesssim n^{-\delta}
\end{equation} for some $\delta>0$ for all $s>0$. Recall that $$\E(S_{n}^{2})=4\sum_{i_1< j_1\in [0, n)} \sum_{i_2< j_2\in [0, n)} \int \1_{B(T^{i_1}(x), r_n)} (T^{j_1}x) \1_{B(T^{i_2}(x), r_n)}(T^{j_{2}}x)\, d\mu(x).$$
Combining Propositions \ref{prop:Well separated} and \ref{prop:not well separated} with \eqref{eq:integral asymptotics}, and using the fact that 
\begin{align*}
&\#\{(i_1, i_2, j_1, j_2):(i_1, i_2, j_1, j_2) \textrm{ is well separated}\, i_{1}<j_{1} \textrm{ and } i_{2}<j_{2}\}\\
&\hspace{8cm}=\frac{n^{4}}{4}+\mathcal{O}(n^{3+o(1)}),
\end{align*} for any $s>0$ we have
\begin{align}
	\label{eq:E(S_n^2) bound}
&\left|\E(S_{n}^{2})-n^{4}\left(\int\mu(B(x,r_{n}))~d\mu(x)\right)^{2}\right|\nonumber \\
&\leq \left|4\sum_{\substack{(i_1, i_2, j_1, j_2)\\ (i_1, i_2, j_1, j_2) \textrm{ is } \\ \textrm{well separated}}}\int \1_{B(T^{i_1}(x), r_n)} (T^{j_1}x) \1_{B(T^{i_2}(x), r_n)} (T^{j_2}x)~d\mu(x) \right.\nonumber\\
&\hspace{60mm} \left.-n^{4}\left(\int\mu(B(x,r_{n}))~d\mu(x)\right)^{2}\right|\nonumber\nonumber\\
&+\left|4\sum_{\substack{(i_1, i_2, j_1, j_2) \\ (i_1, i_2, j_1, j_2) \textrm{ is not } \\ \textrm{well separated}}}\int \1_{B(T^{i_1}(x), r_n)} (T^{j_1}x) \1_{B(T^{i_2}(x), r_n)} (T^{j_2}x)~d\mu(x)\right|\nonumber\\
&\lesssim n^{4-2\beta C_{\mu}-\epsilon_{2}}+n^{4-2\beta C_{\mu}-\epsilon_{8}}+n^{3-2\beta C_{\mu}+o(1)}\nonumber\\
&\lesssim n^{4-2\beta C_{\mu}-\min\{\epsilon_{3},\epsilon_{8}\}}.
\end{align}
In the last line we have assumed that $\min \{\epsilon_{3},\epsilon_{8}\}<1,$ which we may freely do without loss of generality.
Moreover by Proposition \ref{prop:expectation}, \eqref{eq:integral asymptotics} and the well known equality $x^{2}-y^{2}= (x-y)^{2}+2(x-y)y$ for $x,y\in \R,$ for any $s>0$ we have
\begin{align}
	\label{eq:E(S_n)^2 bound}
\left|\E(S_{n})^{2}-n^{4}\left(\int \mu(B(x,r_{n}))~d\mu(x)\right)^{2}\right|& \lesssim n^{4-2\beta C_{\mu}-2\epsilon_{1}}+n^{4-2\beta C_{\mu}-\epsilon_{1}+o(1)}\nonumber\\
&\lesssim n^{4-2\beta C_{\mu}-\epsilon_{1}+o(1)}.
\end{align} Combining the triangle inequality, \eqref{eq:E(S_n^2) bound}, \eqref{eq:E(S_n)^2 bound} and \eqref{eq:integral asymptotics}, for any $s>0$ we have that 
\begin{equation*}
\frac{\E(S_{n}^{2})-\E(S_{n})^{2}}{n^{4}\left(\int\mu(B(x,r_{n}))~d\mu(x)\right)^{2}}\lesssim \frac{n^{4-2\beta C_{\mu}-\min\{\epsilon_{1},\epsilon_{3},\epsilon_{8}\}+o(1)}}{n^{4-2\beta C_{\mu}+o(1)}}\lesssim n^{-\min\{\epsilon_{1},\epsilon_{3},\epsilon_{8}\}/2}.
\end{equation*}
Taking $\delta=\min\{\epsilon_{1},\epsilon_{3},\epsilon_{8}\}/2$ we see that \eqref{eq:delta decay2} holds and so \eqref{eq:delta decay} also holds. This completes our proof.
\end{proof}

\section{Applications}

\label{sec:apps}

Here we will first apply our results to acips for Gibbs-Markov maps and the Gauss map, which is the content of Corollaries~\ref{cor:2orbacip} and \ref{cor:1orbacip}.  We then give examples of non-acip cases where our results apply, and finally an (acip) example of a slow mixing system where the conclusion of Corollary \ref{cor:1orbacip} does not hold.

\subsection{Acips for Gauss and Gibbs-Markov maps}

\label{ssec:acipapps}

Here we will prove Corollaries~\ref{cor:2orbacip} and \ref{cor:1orbacip}.  

\begin{lemma} If $\mu_1, \mu_2$ are acips on $[0, 1]$ with densities $\rho_1, \rho_2$ in BV, then 
$$\int\mu_1(B(x, r))~d\mu_2(x) \sim 2r\int(\rho_1\rho_2)(x)~dx.$$
\label{lem:acipdense}
\end{lemma}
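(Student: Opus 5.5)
We prove Lemma~\ref{lem:acipdense} by writing the integral explicitly and applying a Lebesgue-differentiation type argument, with the BV hypothesis supplying the uniform control. For $x\in[0,1]$ we have $\mu_1(B(x,r))=\int_{x-r}^{x+r}\rho_1(t)\,dt$ (with $\rho_1$ extended by $0$ outside $[0,1]$). Hence
$$\int\mu_1(B(x,r))~d\mu_2(x)=\int_0^1\rho_2(x)\int_{-r}^{r}\rho_1(x+u)\,du\,dx .$$
Dividing by $2r$, the claim is equivalent to
$$\frac{1}{2r}\int_{-r}^{r}\int_0^1\rho_2(x)\rho_1(x+u)\,dx\,du\;\longrightarrow\;\int_0^1(\rho_1\rho_2)(x)\,dx \quad\text{as } r\to 0,$$
using Fubini, which is justified since everything is nonnegative. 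Put $F(u):=\int_0^1\rho_2(x)\rho_1(x+u)\,dx$. It then suffices to show that $F(u)\to F(0)$ as $u\to0$, because the left-hand side is the average of $F$ over $[-r,r]$.

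For the continuity of $F$ at $0$ we use the boundedness of BV functions: $\|\rho_2\|_\infty\le \|\rho_2\|_{BV}<\infty$. This gives
$$|F(u)-F(0)|\le\|\rho_2\|_\infty\int_{\mathbb R}|\rho_1(x+u)-\rho_1(x)|\,dx .$$
Continuity of translation in $L^1(\mathbb R)$ makes the right-hand side tend to $0$. Since $\rho_1$ is BV, we even have the quantitative estimate $\int|\rho_1(x+u)-\rho_1(x)|\,dx\le |u|\,\mathrm{Var}(\rho_1)$, where $\mathrm{Var}$ is computed for the zero-extended function and so includes the two boundary jumps, which are finite. Consequently $|F(u)-F(0)|\lesssim|u|$, and the average of $F$ over $[-r,r]$ equals $F(0)+\mathcal O(r)$. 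That is,
$$\int\mu_1(B(x,r))~d\mu_2(x)=2r\int\rho_1\rho_2\,dx+\mathcal O(r^2).$$

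The one point that needs checking for this to be a genuine ``$\sim$'' statement is that the limit is nonzero, i.e.\ $\int\rho_1\rho_2>0$. In the intended applications this holds because the densities are bounded away from $0$ on $[0,1]$. In general, if $\int\rho_1\rho_2=0$ we can only read the conclusion as $o(r)$, so we state the lemma under the standing assumption $\int\rho_1\rho_2>0$. The main technical step is the translation estimate for BV functions, which follows by approximating $\rho_1$ with step functions or by the standard inequality $\int|f(\cdot+u)-f|\le|u|\,|Df|(\mathbb R)$. The $\mathcal O(r^2)$ error term also gives continuous mean scaling and the existence $C_{\mu_1,\mu_2}=1$ directly, which is what Corollaries~\ref{cor:2orbacip} and \ref{cor:1orbacip} require.
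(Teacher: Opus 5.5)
Your argument is correct, but it takes a different route from the paper.

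\textbf{The paper's route.} The paper partitions $[0,1]$ into $m$ cells and bounds separately the strips of width $r$ around the cell endpoints, at cost $\lesssim r^2m\|\rho_1\|_\infty\|\rho_2\|_\infty$. On the rest of each cell it writes $\mu_1(B(x,r))=2r\rho_1(x)+\int_{x-r}^{x+r}(\rho_1(y)-\rho_1(x))\,dy$ and bounds the integrand pointwise by the variation of $\rho_1$ over that cell. This gives $\int\mu_1(B(x,r))\,d\mu_2\le 2r\bigl(\int\rho_1\rho_2+\tfrac1m\|\rho_2\|_\infty\mathrm{Var}(\rho_1)+rm\|\rho_1\|_\infty\|\rho_2\|_\infty\bigr)$. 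It then lets $r\to0$ and $m\to\infty$, and does the lower bound ``similarly''.

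\textbf{Your route.} You use Tonelli to write the quantity as the average $\int_{-r}^{r}F(u)\,du$ of the cross-correlation $F$. This gives the upper and lower bounds at once. No partition or boundary strips are needed: the endpoints of $[0,1]$ only contribute the two jumps of the zero extension to $\mathrm{Var}$. You also average over $x$ before estimating, via $\|\rho_1(\cdot+u)-\rho_1\|_{L^1}\le|u|\,\mathrm{Var}$, which replaces the paper's worst-case bound on each cell.

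\textbf{What each buys.} You get an explicit error $\mathcal O(r^2)$, i.e.\ relative error $\mathcal O(r)$. The paper's bound, even optimised at $m\asymp r^{-1/2}$, yields only relative error $\mathcal O(r^{1/2})$. The same argument also shows that the qualitative statement only needs continuity of $F$ at $0$, which holds whenever $\rho_1,\rho_2\in L^2$ (by continuity of translation in $L^2$). The paper's proof, by contrast, is entirely pointwise and elementary, but it uses boundedness of both densities and the cellwise variation of $\rho_1$.

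\textbf{Your side remarks.} Your caveat that $\int\rho_1\rho_2>0$ is needed for $\sim$ to be meaningful is apt. The paper's proof tacitly divides by $2r\int\rho_1\rho_2$, and in its applications this is guaranteed by the densities being bounded below. Your closing remark on continuous mean scaling is also fine. To cover the finitely many $n$ for which $s/n^\beta$ is not small, you need in addition the continuity and positivity of $r\mapsto\int_{-r}^rF$. The paper instead reads these properties directly off the two-sided density bounds.
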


\begin{proof}
	Let $m\in \N$ be arbitrary. Then for $r<1/2m$ we have
	\begin{align*}
		\int \mu_{1}(B(x,r))~d\mu_{2}(x)&=\sum_{l=0}^{m-1}\int_{[\frac{l}{m}+r,\frac{l+1}{m}-r]}\mu_{1}(B(x,r))~d\mu_{2}(x)\\
		&\quad+\sum_{l=0}^{m-1}\int_{[\frac{l}{m},\frac{l}{m}+r]\cup [\frac{l+1}{m}-r,\frac{l+1}{m}]}\mu_{1}(B(x,r))~d\mu_{2}(x).
	\end{align*}
For the second term we have	
$$\sum_{l=0}^{m-1}\int_{[\frac{l}{m},\frac{l}{m}+r]\cup (\frac{l+1}{m}-r,\frac{l+1}{m}]}\mu_{1}(B(x,r))~d\mu_{2}(x)\leq 2r^{2}m\|\rho_{1}\|_{\infty}\|\rho_{2}\|_{\infty}.$$ In our analysis of the first term we will use $\textrm{Var}_{a}^{b}(\rho_{1})$ to denote the total variation of $\rho_{1}$ on a closed interval $[a,b].$ In this case we have
\begin{align*}
	&\sum_{l=0}^{m-1}\int_{[\frac{l}{m}+r,\frac{l+1}{m}-r]}\mu_{1}(B(x,r))~d\mu_{2}(x)\\
	&\quad=  \sum_{l=0}^{m-1}\int_{[\frac{l}{m}+r,\frac{l+1}{m}-r]}2r\rho_{1}(x)~d\mu_{2}(x)\\
	&\qquad +\sum_{l=0}^{m-1}\int_{[\frac{l}{m}+r,\frac{l+1}{m}-r]}\int_{x-r}^{x+r}\rho_{1}(y)-\rho_{1}(x)~dy~d\mu_{2}(x)\\
	&\quad\leq 2r \int\rho_{1}(x)\rho_{2}(x)~dx+ \sum_{l=0}^{m-1}\int_{[\frac{l}{m}+r,\frac{l+1}{m}-r]}2r\cdot \textrm{Var}_{\frac{l}{m}}^{\frac{l+1}{m}}(\rho_{1})~d\mu_{2}(x)\\
	& \quad\leq 2r \int\rho_{1}(x)\rho_{2}(x)~dx+ \frac{2r}{m}\|\rho_{2}\|_{\infty}\sum_{l=0}^{m-1} \textrm{Var}_{\frac{l}{m}}^{\frac{l+1}{m}}(\rho_{1})\\	
	&\quad= 2r \left(\int\rho_{1}(x)\rho_{2}(x)~dx+ \frac{1}{m}\|\rho_{2}\|_{\infty}\textrm{Var}_{0}^{1}(\rho_{1})\right).
\end{align*}
In the final line we have used that the total variation is additive on closed intervals. Combining the estimates above we have
\begin{align*}
&\int \mu_{1}(B(x,r))~d\mu_{2}(x)\\
&\hspace{1.5cm} \leq 2r\left(\int\rho_{1}(x)\rho_{2}(x)~dx+ \frac{1}{m}\|\rho_{2}\|_{\infty}\textrm{Var}_{0}^{1}(\rho_{1})+rm\|\rho_{1}\|_{\infty}\|\rho_{2}\|_{\infty}\right).
\end{align*} We see that all of the terms in the large bracket apart from $ \int\rho_1\rho_2~dx$ can either be made arbitrarily small by choosing $m$ sufficiently large, or tend to zero as $r\to 0$. This implies
$$\limsup_{r\to 0}\frac{\int\mu_1(B(x, r))~d\mu_2(x) }{2r\int(\rho_1\rho_2)(x)~dx}\leq 1.$$  The corresponding lower bound for the liminf follows from a similar argument.
\end{proof}

We are now in a position to prove the corollaries.

\begin{proof}[Proof of Corollary~\ref{cor:2orbacip}]
Since our maps are Gibbs-Markov or the Gauss map, the acips $\mu_1$ and $\mu_2$ have densities $\rho_1$ and $\rho_2$ in BV. Moreover, $\rho_{1}$ and $\rho_{2}$ are both bounded away from 0 and bounded above.  In particular this means that $C_{\mu_1, \mu_2}=1$ and $\mu_1$ and $\mu_2$ have continuous mean scaling.  Furthermore, $(X, T_i, \mu_i)$ have exponential mixing for
    $BV$ against $L^\infty$ for $i=1, 2$, see for example \cite{Ryc83} for the Gibbs-Markov case and \cite[Section 2.6]{BruDemTod18} for that case, done in more details, and for the Gauss map case.  Therefore this result follows from Theorem~\ref{mainthm:2orb} and Lemma~\ref{lem:acipdense}.
  \end{proof}
  
\begin{proof}[Proof of Corollary~\ref{cor:1orbacip}]
This proof is almost the same as that for Corollary~\ref{cor:2orbacip}. The correlation dimension exists and $\mu$ has continuous mean scaling by the arguments above. By \cite[Lemma~4.16]{Zha24} we satisfy the required exponential 4-mixing on intervals assumption. The Frostman dimension is $1$ since the density is bounded above. Moreover, by \cite[Lemma~3.4]{HolNicTor12} the early return property holds with exponent $D_\mu=1$. Our result now follows from Theorem \ref{mainthm:1orb} and Lemma \ref{lem:acipdense}.
\end{proof}

\subsection{The continuous mean scaling property}

Since our applications require the continuous mean scaling property, which is not always easy to check in non-acip settings, we give an alternative criterion for the single measure case in the following lemma.

\begin{lemma}
Suppose that $\mu$ has the property that there exists $r_{0}>0,$ $D>1$ and $C>1$ such that for any $x\in \textrm{supp}(\mu)$ and $r\in (0,r_{0})$ we have $$\mu(B(x,Dr))\geq C\mu(B(x,r)).$$ Then $\mu$ has the continuous mean scaling property.
\label{lem:backdouble}
\end{lemma}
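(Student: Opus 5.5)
The plan is to reduce continuous mean scaling to a single comparison between $I(r):=\int\mu(B(x,r))\,d\mu(x)$ at two nearby radii, and to get that comparison from the hypothesis by Fubini and an iteration. By monotonicity, $I((s-\delta)/n^\beta)\le I(s/n^\beta)\le I((s+\delta)/n^\beta)$. So it suffices to show the following: for every $\eps>0$ there is $\eta>0$ such that
\[
(1-\eps)\,I(R)\le I(r)\quad\text{whenever } r<R\le(1+\eta)r,\ R<r_0 .
\]
Applying this with $(r,R)=(s/n^\beta,(s+\delta)/n^\beta)$ controls the $+\delta$ ratio. Applying it with $(r,R)=((s-\delta)/n^\beta,s/n^\beta)$ controls the $-\delta$ ratio. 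In both cases $R/r\le 1+\delta/(s-\delta)$ independently of $n$, so choosing $\delta_0$ small in terms of $\eta$ and $s$ gives a bound uniform in all $n$ with $s/n^\beta<r_0/2$.

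For the key estimate, write $I(R)-I(r)=(\mu\times\mu)\{(x,y):r<|x-y|< R\}$, up to the usual care with boundaries. For fixed $y$, the slice $\{x:r<|x-y|<R\}$ is the union of two intervals $J_\pm$, each of length at most $\eta r$, lying at distance about $r$ from $y$. If $J_+\cap\mathrm{supp}(\mu)=\emptyset$ it contributes nothing. Otherwise I pick $z\in J_+\cap\mathrm{supp}(\mu)$, so that $J_+\subset B(z,\eta r)$. I then iterate the hypothesis $\mu(B(z,\rho))\le C^{-1}\mu(B(z,D\rho))$ a total of $k$ times, with $k$ the largest integer such that $D^k\eta r\le r/4$; thus $k\gtrsim \log(1/(4\eta))/\log D$. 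This yields
\[
\mu(J_+)\le C^{-k}\mu(B(z,r/4))\le C^{-k}\mu(B(y,R)),
\]
because $B(z,r/4)\subset B(y,R)$ as $|z-y|\le R$ and $z$ is at distance at least $r$ from $y$. The same bound holds for $J_-$. Integrating in $y$ gives $I(R)-I(r)\le 2C^{-k}I(R)$, that is, $(1-2C^{-k})I(R)\le I(r)$. Since $C>1$ and $k\to\infty$ as $\eta\to0$, choosing $\eta$ small enough that $2C^{-k}<\eps$ finishes the main step.

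The main obstacle I expect is bookkeeping rather than ideas. First, one must check that all radii used ($\eta r$ up to $r/4$) stay below $r_0$. Second, one must handle the finitely many $n$ with $s/n^\beta\ge r_0/2$. For these finitely many radii I would argue directly: the annulus measure $(\mu\times\mu)\{r<|x-y|\le r+\delta'\}$ tends to $0$ as $\delta'\downarrow0$ by continuity of measure. For the inner side one needs $(\mu\times\mu)\{|x-y|=r\}=0$. I would try to obtain this from the same reverse doubling estimate applied to shrinking $J_\pm$, which makes $\mu$ nonatomic on its support. If that does not suffice, I would note that the definition can be read with open balls, or that only large $n$ matter for the limits in the main theorems. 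Finally, since there are finitely many exceptional $n$ and each gives some admissible $\delta$, the minimum of these together with the uniform $\delta_0$ from the main step gives the required $\delta_0$.
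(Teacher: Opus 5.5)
\textbf{The gap.} The containment $B(z,r/4)\subset B(y,R)$ is false. For $z\in J_+$ you have $r<|z-y|<R$, so $B(z,r/4)$ contains points at distance nearly $|z-y|+r/4>5r/4$ from $y$. Since $R\le(1+\eta)r$ with $\eta$ small, $B(z,r/4)\not\subset B(y,R)$. The fact that $|z-y|\ge r$ is a lower bound and cannot give an upper containment.

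Choosing the ball more cleverly does not help either. Any ball centred at a support point of the thin annulus, and large enough for the iteration to gain a factor $C^{-k}$, sticks out of $B(y,R)$. After integrating in $y$, what your argument actually proves is
\[
I(R)-I(r)\le 2C^{-k}\,I(R+r/4)\le 2C^{-k}\,I(2R).
\]
So you are left comparing the correlation integral at two different radii, which is the kind of statement you set out to prove. The reverse doubling hypothesis gives only lower bounds on growth, so it cannot supply this comparison. This is a missing idea, not bookkeeping.

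\textbf{The missing ingredient} is a crude doubling property of $I$. It holds for every Borel probability measure on $\mathbb{R}$, with no hypothesis. Partition into half-open intervals $Q_j$ of length $\rho$. Every pair with $|x-y|<2\rho$ lies in some $Q_j\times Q_{j+p}$ with $|p|\le 2$, so by Cauchy--Schwarz
\[
I(2\rho)\le\sum_{p=-2}^{2}\sum_j\mu(Q_j)\mu(Q_{j+p})\le 5\sum_j\mu(Q_j)^2\le 5\,I(\rho).
\]
With this, your estimate becomes $I(R)-I(r)\le 10C^{-k}I(R)$, and the rest of your plan works. Moreover, if you stop the iteration at radius $\min\{r/4,r_0/2\}$ and use $r\le s$, then $k$ still tends to infinity as $\eta\to0$, uniformly in $n$. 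So the separate treatment of small $n$ becomes unnecessary.

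\textbf{Comparison with the paper.} This Cauchy--Schwarz comparison with the diagonal sum is exactly the device in the paper's proof, but used at the small scale. There the annulus is covered by the two balls of radius $\delta/n^\beta$ centred at $x\pm s/n^\beta$. These are discretised at scale about $\delta/n^\beta$ and bounded by $7\sum_j\mu(I_j)^2$, while $I(s/n^\beta)\ge\sum_j\mu(I_j)\mu(P\cdot I_j)$. The iterated reverse doubling is then applied to the small intervals themselves, giving $\mu(I_j)<\tfrac{\varepsilon}{2}\mu(P\cdot I_j)$. Hence it is only ever used at scales below about $\delta_0$, and all $n$ are treated uniformly.
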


Note that we can iterate the relation above: so long as $D^{n-1}r<r_0$,
\begin{equation}
\mu(B(x,D^nr))\geq C^n\mu(B(x,r)).\label{eq:backdoubleiter}
\end{equation}

As we will see, this condition is satisfied by many dynamically defined measures, but the fact that it holds for  acips with density bounded above and below and Ahlfors regular measures is immediate.

\begin{proof}
Fix $\mu$ satisfying our assumptions and $s,\epsilon,\beta >0.$ Then for any $\delta>0$ we have
\begin{align*}
0& \leq \frac{\int \mu(B(x,(s+\delta)/n^{\beta}))~d\mu(x)}{\int \mu(B(x,s/n^{\beta}))~d\mu(x)}-1\\
&=\frac{\int \mu(B(x,(s+\delta)/n^{\beta})\setminus B(x,s/n^{\beta}) )~d\mu(x)}{\int \mu(B(x,s/n^{\beta}))~d\mu(x)}\\
&\leq \frac{\int \mu(B(x+s/n^{\beta},\delta/n^{\beta}))~d\mu(x)}{\int \mu(B(x,s/n^{\beta}))~d\mu(x)}\\
&\quad + \frac{\int \mu(B(x-s/n^{\beta},\delta/n^{\beta}) )~d\mu(x)}{\int \mu(B(x,s/n^{\beta}))~d\mu(x)}.
\end{align*}
We will now show that we can choose $\delta_{0}>0$ such that when $0<\delta\leq \delta_{0}$ then $$\frac{\int \mu(B(x+s/n^{\beta},\delta/n^{\beta}))~d\mu(x)}{\int \mu(B(x,s/n^{\beta}))~d\mu(x)}<\epsilon/2$$ for all $n\in \N.$ The argument we present can easily be adapted to find $\delta_{0}>0$ for which 
$$\frac{\int \mu(B(x-s/n^{\beta},\delta/n^{\beta}) )~d\mu(x)}{\int \mu(B(x,s/n^{\beta}))~d\mu(x)}<\epsilon/2$$ for all $0<\delta\leq \delta_{0}$ and $n\in \N$, and to show that for any $s,\beta,\epsilon>0$ there exists $\delta_{0}>0$ such that
$$\left| \frac{\int \mu(B(x,(s-\delta)/n^{\beta}))~d\mu(x)}{\int \mu(B(x,s/n^{\beta}))~d\mu(x)}-1\right|<\epsilon$$ for all $0<\delta\leq \delta_{0}$ for all $n\in \N$. Thus $\mu$ will have continuous mean scaling.

Given $\delta>0$ and $n\in \N$ let $L\in \N$ be such that $$2^{-L-1}< \frac{\delta}{n^{\beta}}\leq 2^{-L}$$ and $w\in \mathbb{N}_0$ be such that $$\frac{w}{2^{L}}\leq \frac{s}{n^{\beta}}\leq \frac{w+1}{2^{L}}.$$ Letting $I_j= \left[\frac j{2^L}, \frac{j+1}{2^L}\right)$ for $0\leq j\leq 2^{L-1}$, we have 
\begin{align*}\int \mu(B(x+s/n^{\beta},\delta/n^{\beta}))~d\mu(x)&\leq \sum_{j=0}^{2^{L}-1}\mu(I_{j})\sum_{p=-3}^{3}\mu(I_{j+w+p})\\
	&\leq \sum_{p=-3}^{3}\sum_{j=0}^{2^{L}-1}\mu(I_{j})\mu(I_{j+w+p})\\
	& \leq \sum_{p=-3}^{3}\left(\sum_{j=0}^{2^{L}-1}\mu(I_{j})^{2}\right)^{1/2}\left(\sum_{j=0}^{2^{L}-1}\mu(I_{j+w+p})^{2}\right)^{1/2}\\
	&\leq 7 \sum_{j=0}^{2^{L}-1}\mu(I_{j})^{2}.
	\end{align*} Where we have used the Cauchy-Schwarz inequality in the penultimate line. For any $P>0$ and interval $I$ we let $P\cdot I$ be the interval with the same centre as $I$ expanded by a factor of $P$. Since $s$ is fixed, for any $P>1$ we can choose $\delta_{0}>0$ to be sufficiently small so that for any $0< \delta\leq \delta_{0}$ and $n\in \N$ we have 
	$$\int \mu(B(x,s/n^{\beta}))~d\mu(x)\geq \sum_{j=0}^{2^{L}-1}\mu(I_{j})\mu(P\cdot I_j).$$
	Combining the estimates above, we have shown that for any $P>1$ we can choose $\delta_{0}>0$ so that for any $0<\delta\leq \delta_{0}$ and $n\in \N$ we have
	\begin{equation}
		\label{eq:P growth}
	\frac{\int \mu(B(x+s/n^{\beta},\delta/n^{\beta}))~d\mu(x)}{\int \mu(B(x,s/n^{\beta}))~d\mu(x)}\leq \frac{7\sum_{j=0}^{2^{L}-1}\mu(I_{j})^{2}}{\sum_{j=0}^{2^{L}-1}\mu(I_{j})\mu(P\cdot I_j)}.
		\end{equation}It follows now from \eqref{eq:backdoubleiter} that for any $\epsilon>0,$ we can choose $\delta_{0}>0$ and $P>1$ so that \eqref{eq:P growth} holds for all $0<\delta\leq \delta_{0}$ and $n\in \N,$ and for any interval $I$ of at length at most $2\delta_{0}$ satisfying $\mu(I)>0,$ we have $\mu(I)<\frac{\epsilon}{2}\cdot \mu(P\cdot I).$ Using this final inequality in \eqref{eq:P growth} implies $$\frac{\int \mu(B(x+s/n^{\beta},\delta/n^{\beta}))~d\mu(x)}{\int \mu(B(x,s/n^{\beta}))~d\mu(x)}<\epsilon/2$$ for all $0<\delta\leq \delta_{0}$ for all $n\in \N.$ Which was what we wanted to show. 	
\end{proof}

\subsection{Cookie cutters: the general case}  
\label{ssec:cookiegeneral}
Here we outline how we can apply our results to cookie cutters and their associated equilibrium states.  

Recall Definition \ref{def:Gibbs-Markov} and the discussion on cookie cutters after Corollary \ref{cor:2orbacip}. We begin by remarking that there is some minimal gap $G>0$ such that $\dist(P, P') \ge G$ for distinct $P, P'\in \P$. Now let $\P_n$ denote the set of \emph{$n$-cylinders}, i.e., maximal intervals $P$ such that for each $i=0, \ldots, n-1$,  $\tilde T^iP\subset P_i$ for some $P_i\in \P$.  The bounded distortion property, along with the regularity and expansion properties, implies that $\frac{D\tilde T^n x}{D\tilde T^ny}$ for $x, y\in P\in \P_n$ is uniformly bounded from above and below.  Hence there is $G'>0$ such that if $P, P'\in \P_n$ are distinct and $P\subset P''\in \P_{n-1}$, then $\dist(P, P') \ge G'|P''|$.  Note that by the bounded distortion property there also exists $b_T>0$ such that $|P|\ge b_T|P''|$.

  We next briefly explain the thermodynamic formalism associated to cookie cutters, see for example \cite[Chapter 5]{PrzUrb10} for proofs of the claims made below.

We note that for $\phi:\tilde X\to \R$ H\"older continuous there is a unique equilibrium state $\mu_\phi$, that is $\mu_{\phi}$ is the unique $T$-invariant measure satisfying
$$h(\mu_\phi)+ \int\phi~d\mu_\phi= P(\phi):= \sup\left\{h(\nu) + \int\phi~d\nu: \nu \in \M\right\}$$
where $\M$ is the space of $T$-invariant probability measures.   Moreover, there is a H\"older continuous $\rho$ (bounded away from zero and infinity) such that for $\psi= \phi+\log \rho-\log \rho\circ T -P(\phi)\le -c_\phi<0$, for some $c_\phi>0$, $\mu_\phi$ is also an equilibrium state for $\psi$, but it is furthermore \emph{$\psi$-conformal}: this means that if $T:A\to T(A)$ is bijective on a measurable set $A$, then 
$$\mu_\phi(T(A)) = \int_A\e^{-\psi}~d\mu_\phi.$$
Iterating this we obtain that if $T^n:A\to T^n(A)$ is bijective, then
$$\mu_\phi(T^n(A)) = \int_A\e^{-S_n\psi}~d\mu_\phi$$
where $S_n\psi(x) := \sum_{k=0}^{n-1}\psi(T^kx)$.  

Noting that the above means that if $\phi$ is $\alpha$-H\"older then so is $\psi$, we will immediately obtain some additional regularity for $S_n\psi$ which is provided by the following lemma. 

\begin{lemma}
If $\psi$ is $\alpha$-H\"older then there exists $K\ge 1$ such that 
for any $n\in\N$, for $x, y\in P\in \P_n$, 
$$|S_n\psi(x) - S_n\psi(y)|\le K|\tilde T^nx- \tilde T^ny|^\alpha.$$
\label{lem:psireg}
\end{lemma}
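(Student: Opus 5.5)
The plan is the standard telescoping argument for Birkhoff sums of a Hölder potential along an expanding branch, using the uniform contraction of inverse branches.

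Fix $n$ and $x,y\in P\in\P_n$. Write
$$S_n\psi(x)-S_n\psi(y)=\sum_{k=0}^{n-1}\bigl(\psi(\tilde T^kx)-\psi(\tilde T^ky)\bigr).$$
For each $k$ the points $\tilde T^kx$ and $\tilde T^ky$ lie in a common element of $\P$, namely $P_k$ in the definition of $n$-cylinders, since $\tilde T^kP\subset P_k$. Because $\psi$ is $\alpha$-Hölder there (on each $P\in\P$, which suffices since the points share a partition element), we get
$$|\psi(\tilde T^kx)-\psi(\tilde T^ky)|\le C_\psi|\tilde T^kx-\tilde T^ky|^\alpha.$$

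The key step is to compare $|\tilde T^kx-\tilde T^ky|$ with $|\tilde T^nx-\tilde T^ny|$. The interval $[\tilde T^kx,\tilde T^ky]$ lies inside $\tilde T^kP$, which is contained in a single element of $\P$ on which $\tilde T$ is $C^1$ and monotone (injective with $|D\tilde T|\ge\lambda$). The same holds for each of the next $n-k$ iterates, because $\tilde T^{j}P$ lies in $P_j$ for $j<n$. Hence $\tilde T^{n-k}$ is a $C^1$ diffeomorphism from $[\tilde T^kx,\tilde T^ky]$ onto $[\tilde T^nx,\tilde T^ny]$ with derivative at least $\lambda^{n-k}$ in absolute value. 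The mean value theorem then gives
$$|\tilde T^kx-\tilde T^ky|\le\lambda^{-(n-k)}|\tilde T^nx-\tilde T^ny|.$$
The one point to check carefully is that the whole segment between the iterates stays inside a single branch domain at each step. This is the only possible obstacle, and it follows from $P$ being a maximal interval with $\tilde T^iP\subset P_i$, since intervals map to intervals under continuous monotone branches.

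Summing the resulting geometric series gives
$$|S_n\psi(x)-S_n\psi(y)|\le C_\psi\sum_{k=0}^{n-1}\lambda^{-\alpha(n-k)}|\tilde T^nx-\tilde T^ny|^\alpha\le \frac{C_\psi}{\lambda^\alpha-1}|\tilde T^nx-\tilde T^ny|^\alpha.$$
So the lemma holds with $K=\max\{1,C_\psi/(\lambda^\alpha-1)\}$, which is independent of $n$.
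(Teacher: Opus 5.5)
Your proposal is correct and follows the paper's proof. Both bound each term of the Birkhoff-sum difference using the H\"older estimate, then use the backward contraction $|\tilde T^kx-\tilde T^ky|\le\lambda^{-(n-k)}|\tilde T^nx-\tilde T^ny|$, then sum the geometric series. Your version is slightly more careful: it justifies the mean value theorem step on each branch, keeps the exponent $\alpha$ in the final bound, and explicitly arranges $K\ge 1$.
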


\begin{proof}
Suppose $|\psi(x) -\psi(u)|\le \tilde K|x-y|^\alpha$.  Then recalling that $|D\tilde T|\ge \lambda>1$,
\begin{align*}
|S_n\psi(x) - S_n\psi(y)| & \le \tilde K\sum_{k=0}^{n-1}|\tilde T^k(x)- \tilde T^k(y)|^\alpha \le \tilde K|\tilde T^nx- \tilde T^ny|^\alpha \sum_{k=0}^{n-1}\frac1{\lambda^{\alpha k}}\\
&\le K|\tilde T^nx- \tilde T^ny|
\end{align*}
for $K= \tilde K/(1-\lambda^{-\alpha})$.
\end{proof}

We will now prove that the assumptions appearing in our main theorems hold for equilibrium states associated to H\"{o}lder continuous potentials. First note that by \cite{Ryc83} and \cite[Lemma~4.16]{Zha24}, the mixing properties required in Theorems~\ref{mainthm:2orb}, \ref{mainthm:1orb} and \ref{thm:2orbgen} hold. 
As in \cite{PesWei97}, $C_\mu$ exists for equilibrium states associated to cookie cutters, see also \cite[Chapter 9]{PrzUrb10}.  Note that $D_q(\mu)$, the $q$-Renyi dimension was first rigorously shown to exist for equilibrium states associated to cookie cutters in \cite{Ran89}, and the proof that $D_2(\mu)= C_\mu$ was given in the other references.

\begin{lemma}
Let $\mu$ be an equilibrium state with Frostman exponent $F_{\mu}$. For any $\tilde{F}_\mu<F_\mu$, there exists $C>0$ such that
$\mu(A_r(n))\le C r^{\tilde{F}_\mu}$ for all $r>0$ and $n\in \N$. Thus the early return exponent satisfies $D_{\mu}\geq F_{\mu}$. 
\label{lem:cookierecur}
\end{lemma}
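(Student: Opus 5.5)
Fix $n\in\N$ and $r>0$; the case $r\ge r_0$ is trivial, so let $r$ be small. The plan is to decompose $A_r(n)$ along the $n$-cylinders $\P_n$. Each branch $T^n|_P$ is a bijection with bounded distortion. The set $A_r(n)\cap P$ is the set of $x\in P$ with $|T^nx-x|<r$. On $P$ the map $x\mapsto T^nx-x$ has derivative at least $\lambda^n-1$ in absolute value and is monotone, at least once $n\ge1$ is such that $\lambda^n>2$; small $n$ are treated separately below. Hence $A_r(n)\cap P$ is a single interval $J_P$. Its image satisfies $T^n(J_P)\subset B(z_P,2r)$ for a suitable point $z_P$: for any $x\in J_P$ we have $|T^nx-x|<r$ and $|x-x'|\le |P|$ for $x,x'\in J_P$. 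So $T^n(J_P)$ lies within distance $r+|P|$ of $P$. A better way to see this is that $T^n J_P$ is an interval of length at most about $(1+1/(\lambda^n-1))\cdot 2r\lesssim r$, because $|J_P|\le 2r/(\lambda^n-1)$ and $|T^nJ_P|=|J_P|+|(T^n-\mathrm{id})J_P|$ up to sign.

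Next, transfer the measure of $J_P$ back using conformality. We have $\mu(J_P)=\int_{T^nJ_P}\e^{S_n\psi\circ(T^n|_P)^{-1}}\,d\mu$. By Lemma~\ref{lem:psireg}, $S_n\psi$ varies by at most $K$ on $P$, so
\[\mu(J_P)\lesssim \e^{\sup_P S_n\psi}\,\mu(T^nJ_P)\lesssim \mu(P)\,\mu(T^nJ_P),\]
using $\mu(P)\asymp \e^{\sup_P S_n\psi}$. This comparison follows from $\mu(T^nP)\asymp 1$, since $T^nP$ is a full branch image of measure bounded below. The Frostman bound then gives $\mu(T^nJ_P)\le C'(Cr)^{\tilde F_\mu}\lesssim r^{\tilde F_\mu}$, since $T^nJ_P$ is contained in a ball of radius $\lesssim r$. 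Summing over $P\in\P_n$,
\[\mu(A_r(n))\le\sum_{P\in\P_n}\mu(J_P)\lesssim r^{\tilde F_\mu}\sum_P\mu(P)= r^{\tilde F_\mu}.\]
The implied constants depend only on $K$, $\lambda$, the distortion, and the Frostman constant for $\tilde F_\mu$. They do not depend on $n$ or $r$. This gives the claim, and $D_\mu\ge F_\mu$ follows by letting $\tilde F_\mu\uparrow F_\mu$.

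The main technical point is the finitely many small $n$ with $\lambda^n\le 2$, where $T^n-\mathrm{id}$ might fail to be strictly expanding on a cylinder. For these I would instead replace $T$ by a fixed iterate, or argue directly. Since $|DT^n|\ge\lambda^n>1$, the map $x\mapsto T^nx-x$ still has nonvanishing derivative whenever $DT^n>1$, and in the orientation-reversing case the derivative is $<-2$. So monotonicity on $P$ holds whenever $\lambda^n>1$, which is always true. The estimate $|J_P|\le 2r/(\lambda^n-1)$ only worsens by a constant for finitely many $n$. Hence the constants remain uniform, and I expect no genuine obstruction. The only care needed is verifying $\mu(P)\asymp\e^{S_n\psi(x)}$ uniformly, which is the standard Gibbs property from conformality and Lemma~\ref{lem:psireg}.
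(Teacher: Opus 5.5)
Your proof is correct, but at the key step you take a more direct route than the paper. The common part is the same in both arguments:
\begin{itemize}
\item decompose $A_r(n)$ over the $n$-cylinders;
\item use monotonicity of $\tilde T^n-\mathrm{id}$ on each $P\in\P_n$ to see that $A_r(n)\cap P$ is an interval $J_P$ with $|J_P|\le 2r/(\lambda^n-1)$ and $|\tilde T^nJ_P|\le 2\lambda r/(\lambda-1)$;
\item compare with $\mu(P)$ via $\psi$-conformality, Lemma~\ref{lem:psireg} and $\mu(\tilde T^nP)=1$.
\end{itemize}

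The difference is where the Frostman bound is applied. The paper does not apply it to $\tilde T^nJ_P$ itself. Instead it uses $\tilde T^nJ_P\subset(x_P^--r,x_P^++r)$ to bound $\mu(\tilde T^nJ_P)$ by $\mu(J_P)$ plus the measures of two intervals of length $r$. It then absorbs the $\mu(J_P)$ term into the left-hand side. That absorption needs $\e^{S_n\psi(x_P)}\le\e^{-c_\phi}<1$ together with the sharper H\"older oscillation bound $K(2\lambda r/(\lambda-1))^\alpha$ on $J_P$. This is why the paper requires $r$ small and the normalised potential to be strictly negative.

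Your chain is
$\mu(J_P)=\int_{\tilde T^nJ_P}\e^{S_n\psi\circ(\tilde T^n|_P)^{-1}}\,d\mu\le \e^{\sup_{P} S_n\psi}\,\mu(\tilde T^nJ_P)\le \e^{K}\mu(P)\,\tilde C\bigl(\lambda r/(\lambda-1)\bigr)^{\tilde F_\mu}$.
It uses only the crude oscillation bound $K$ on a cylinder and involves no absorption. It is therefore shorter and relies on strictly less. The paper's template, adapted from \cite{HolNicTor12}, offers no advantage in this setting.

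Two presentational points:
\begin{itemize}
\item Your first justification of $T^n(J_P)\subset B(z_P,2r)$ (``within distance $r+|P|$ of $P$'') does not establish it. Drop it in favour of the length computation you give next.
\item The worry about small $n$ is unnecessary from the outset. On every cylinder and for every $n\ge1$ we have $|D(\tilde T^n-\mathrm{id})|\ge\lambda^n-1\ge\lambda-1>0$, in both orientations, so monotonicity and $|J_P|\le 2r/(\lambda^n-1)$ hold uniformly.
\end{itemize}
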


The proof is adapted from \cite[Lemma~3.4]{HolNicTor12}.  

\begin{proof}
Fix $\tilde{F}_{\mu}<F_{\mu}$ and let $\tilde{C}>0$ be such that 
\begin{equation}
	\label{eq:Frostman bound for recurrence}
\mu(B(x,r))\leq \tilde{C}r^{\tilde{F}_{\mu}}
\end{equation} for all $x\in [0,1]$ and $r>0$. Let $r>0$ and $n\in \N$. We consider $A_r(n)\cap P$ for some $P\in \P_n$.  For simplicity assume $\tilde T^n|_P$ is orientation preserving, otherwise the proof is similar.  Since $\tilde T^n$ is expanding on $P$ the map $x\to \tilde T^{n}x-x$ is strictly increasing on $P$. Thus, if a solution exists there exists a unique $x_{P}^{+}\in P$ such that $\tilde T^nx_{P}^+ = x_{P}^{+}+ r$, and similarly, if a solution exists there exists a unique $x_{P}^{-}\in P$ such that $\tilde T^nx_{P}^- = x_{P}^{-}- r.$ If no solution to $\tilde T^nx = x+ r$ exists then we define $x_{P}^{+}\in P$ to be such that $\tilde T^nx_{P}^{+}-x_{P}^{+}=\max_{x\in P}\tilde T^nx-x.$  If no solution to $\tilde T^nx = x- r$ exists then we define $x_{P}^{-}\in P$ to be such that $\tilde T^nx_{P}^{-}-x_{P}^{-}=\min_{x\in P}\tilde T^nx-x.$ Since there always exists $x\in P$ satisfying $\tilde T^nx-x=0,$ it follows that we always have $\tilde T^nx_{P}^{+}-x_{P}^{+}\leq r$ and $\tilde T^nx_{P}^{-}-x_{P}^{-}\geq -r$. Thus in all cases we have $\tilde{T}^{n}(x_{P}^{-},x_{P}^{+})\subseteq (x_{P}^{-}-r,x_{P}^{+}+r)$. Then due to the monotonicity of $x\to \tilde T^nx-x$ on $P,$ if $x\in P$ satisfies $x<x_{P}^-$ then $x-\tilde T^n(x)>r,$ and if $x>x_{P}^+$ then $\tilde T^n(x)-x>r$.  Hence  $A_r(n)\cap P = (x_{P}^-, x_{P}^+)$. So we wish to estimate $\mu(x_{P}^-, x_{P}^+)$.   Firstly we estimate the length of $\tilde T^n(x_{P}^-, x_{P}^+)$. Using that $x\mapsto \tilde T^n(x)-x$ is expanding by at least $\lambda^n$ and the inclusion $\tilde{T}^{n}(x_{P}^{-},x_{P}^{+})\subseteq (x_{P}^{-}-r,x_{P}^{+}+r),$ it can be shown that $|(x_{P}^-, x_{P}^+)|\le 2r/(\lambda^n-1)$. Using this inequality and the inclusion $\tilde{T}^n(x_{P}^-, x_{P}^+)\subset (x_{P}^--r, x_{P}^++r)$ again, we have the estimate
\begin{align*}
|\tilde T^n(x_{P}^-, x_{P}^+)|& \le |(x_{P}^--r, x_{P}^++r)| \\
&= |(x_{P}^--r, x_{P}^-)|+ |(x^-,x^+)|+ |(x^+,  x^++r)|\\
&\le 2r +\frac{2r}{\lambda^{n}-1}\leq \frac{2\lambda r}{\lambda-1}.
\end{align*}

Let $x_P\in P$ be the unique point such that $\tilde T^n(x_P)=x_P$. It is clear that $x_{P}\in (x_{P}^-,x_{P}^+)$. Using this fact, the bound for $|\tilde T^n(x_{P}^-, x_{P}^+)|$ above, and Lemma~\ref{lem:psireg}, the following holds  $$|S_n\psi(x_P)-S_n\psi(x)|\le K\left(\frac{2\lambda r}{\lambda-1}\right)^\alpha$$ whenever $x\in (x_{P}^-,x_{P}^+)$.  So by $\psi$-conformality,
\begin{align*}
\mu(x_{P}^--r, x_{P}^++r) & \geq  \mu\left(\tilde T^n(x_{P}^-, x_{P}^+)\right) = \int_{(x_{P}^-, x_{P}^+)}\e^{-S_n\psi}~d\mu\\
&\ge  \e^{-K\left(\frac{2\lambda r}{\lambda-1}\right)^\alpha} \e^{-S_n\psi(x_P)}\mu(x_{P}^-, x_{P}^+),\end{align*}
Manipulating the above and using \eqref{eq:Frostman bound for recurrence} implies
\begin{align}
	\label{eq:See Frostman bound}
\mu(x_{P}^-, x_{P}^+)&  \le \e^{K\left(\frac{2\lambda r}{\lambda-1}\right)^\alpha}\e^{S_n\psi(x_P)}\left(\mu(x_{P}^-, x_{P}^+)+ \mu(x_{P}^--r, x_{P}^-) + \mu(x_{P}^+, x_{P}^++r)\right)\nonumber\\
&
\le \e^{K\left(\frac{2\lambda r}{\lambda-1}\right)^\alpha}\e^{S_n\psi(x_P)}\left(\mu(x_{P}^-, x_{P}^+)+ 2\tilde{C}r^{\tilde{F}_\mu}\right).
\end{align}
Since we know that $\psi\le -c_\phi<0$, there exists $\eta>0$ such that for $r$ small enough 
\begin{equation}
	\label{eq:eta space}
	\e^{K\left(\frac{2\lambda r}{\lambda-1}\right)^\alpha}\e^{S_n\psi(x_P)}\le \e^{K\left(\frac{2\lambda r}{\lambda-1}\right)^\alpha}\e^{-c_\phi}<(1-\eta).
\end{equation}  Combining \eqref{eq:See Frostman bound}, \eqref{eq:eta space} and the trivial bound $\e^{K\left(\frac{2\lambda r}{\lambda-1}\right)^\alpha}\leq \e^{K}$ which holds for all $r$ sufficiently small, we have the bound
\begin{equation}
	\label{eq:before sum bound}
 \mu(x_{P}^-, x_{P}^+) \le \frac{2\tilde{C}\e^{K}e^{S_n\psi(x_P)}r^{\tilde{F}_\mu}}{1-\e^{K\left(\frac{2\lambda r}{\lambda-1}\right)^\alpha}\e^{S_n\psi(x_P)}}\le \frac{2\tilde{C}\e^{K}}{\eta} \e^{S_n\psi(x_P)}r^{\tilde{F}_\mu}
 \end{equation} whenever $r$ is sufficiently small. 
Since $\psi$-conformality also implies that there exists $y_P\in P$ such that $\e^{S_n\psi(y_P)}=\mu(P),$ and this $y_{P}$ satisfies $|S_n\psi(x_P)-S_n\psi(y_P)|\le K$ by Lemma \ref{lem:psireg}, \eqref{eq:before sum bound} implies $$ \mu(x_{P}^-, x_{P}^+) \le \frac{2\tilde{C}\e^{2K}\mu(P)}{\eta} r^{\tilde{F}_\mu}.$$
Using this expression and summing over $P\in \P_n$ now yields $$\mu(A_r(n))=\sum_{P\in \P_{n}}\mu(x_{P}^-,x_{P}^+) \le \left(\frac{2\tilde{C}\e^{2K}}{\eta}\right)r^{\tilde{F}_\mu}$$ for all $r$ sufficiently small. Thus our result holds for $r$ sufficiently small. This implies our result for an arbitrary choice of $r>0$ after choosing a potentially larger constant. 
\end{proof}

\begin{lemma}
$\mu$ satisfies the continuous mean scaling property.
\label{lem:ctsscale}
\end{lemma}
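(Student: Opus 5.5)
The plan is to verify the hypothesis of Lemma~\ref{lem:backdouble}. That is, I will find $r_0>0$, $D>1$ and $C>1$ such that
\[
\mu(B(x,Dr))\ge C\mu(B(x,r))
\]
for all $x\in \mathrm{supp}(\mu)=X$ and $r\in(0,r_0)$. Continuous mean scaling then follows directly from that lemma. The tools are the cylinder geometry recorded at the start of this subsection together with $\psi$-conformality and Lemma~\ref{lem:psireg}:
\begin{itemize}
\item distinct $n$-cylinders inside a common $(n-1)$-cylinder $P''$ are separated by at least $G'|P''|$;
\item each child satisfies $|P|\ge b_T|P''|$.
\end{itemize}
Conformality with bounded distortion gives $\mu(P)\asymp \e^{S_n\psi(y)}$ for any $y\in P\in\P_n$, uniformly in $n$. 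Since every branch maps onto $I$, we have $\#\P\ge 2$. Also $\psi\le -c_\phi<0$, and $\psi$ is bounded on the compact attractor, so $\psi\ge -c'$ for some $c'>0$.

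The first step is to compare ball measures with cylinder measures. Fix $x\in X$ and small $r$. Let $n$ be the least integer such that the $n$-cylinder $P_n(x)\ni x$ has $|P_n(x)|\le r$. The bounded-ratio property $|P_n|\ge b_T|P_{n-1}|$ gives $|P_n(x)|\ge b_T r$ up to constants. The gap property then shows that $B(x,r)$ meets $X$ only inside boundedly many cylinders of generation $n-k_0$, for some fixed $k_0$ depending on $G'$ and $b_T$. Indeed, a cylinder at generation $n-k_0$ has length comparable to $\gtrsim r/b_T^{k_0}$, and its complement near it is a gap of size $\ge G'\cdot(\text{its length})>r$.

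Hence $\mu(B(x,r))\le \mu(P_{n-k_0}(x))$. Conversely $\mu(B(x,r))\ge\mu(P_n(x))$. Because each step changes $S_n\psi$ by an amount in $[-c',-c_\phi]$, these two cylinder measures are comparable with a constant depending only on $k_0$, so $\mu(B(x,r))\asymp \mu(P_n(x))$.

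The second step produces the strict gain $C>1$, and I expect it to be the main obstacle. The issue is that the comparability constants in the first step could swamp any ratio from enlarging the ball. To handle this I will compare two generations that are $N$ apart. Choose $D$ so large that $B(x,Dr)\supset P_{n-N}(x)$; by the length ratios, $D\approx b_T^{-N}$ suffices. Then
\[
\mu(B(x,Dr))\ge \mu(P_{n-N}(x))\ge \e^{N c_\phi - O(1)}\mu(P_{n}(x)) \gtrsim \e^{Nc_\phi-O(1)}\mu(B(x,r)).
\]
The $O(1)$ term comes from Lemma~\ref{lem:psireg}, whose distortion constant $K$ is independent of $n$, together with the fixed loss from $k_0$. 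Taking $N$ large enough that $\e^{Nc_\phi-O(1)}>1$ gives $C>1$ with $D=D(N)$ fixed.

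The details to check carefully are that all implied constants are uniform in $x$ and $n$, and that $r_0$ is chosen small enough that $n\ge N+k_0$. With these in place Lemma~\ref{lem:backdouble} applies and $\mu$ has continuous mean scaling.
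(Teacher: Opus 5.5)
Your argument is correct, and it reaches the hypothesis of Lemma~\ref{lem:backdouble} by a somewhat different route from the paper.

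\textbf{Comparison of routes.} The paper chooses $n$ \emph{maximal} such that $B(x,r)\cap X$ lies in a single $n$-cylinder $P$. This gives $\mu(B(x,r))\le\mu(P)$ for free. Maximality together with the gap property forces $r\ge G'|P|$, so $B(x,Dr)$ contains the parent $P'$ with $D=1/(b_TG')$. The gain therefore only has to come from one generation. The paper proves $\mu(P')\ge\Phi\mu(P)$ with $\Phi=1/(1-\e^{-K}M)>1$, using conformality to show that a sibling $Q\subset P'$ carries mass at least $\e^{-K}M\mu(P')$.

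You instead choose $n$ by cylinder length. This forces you to prove two-sided comparability $\mu(B(x,r))\asymp\mu(P_n(x))$, which loses a factor of order $\e^{k_0c'}$. You then beat that loss by jumping $N$ generations and using $\psi\le -c_\phi$ to gain $\e^{Nc_\phi}$. Your version needs more ingredients: a lower bound on $\psi$, the $k_0$-localisation, and a larger $D\approx b_T^{-N}$. It is, however, a robust and standard scheme. The paper's choice of scale incurs no loss at all, so a single-step gain suffices.

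\textbf{A slip to fix in the localisation step.} To conclude $B(x,r)\cap X\subset P_{n-k_0}(x)$ you need $G'|P_{n-k_0}(x)|>r$. That requires a \emph{lower} bound on $|P_{n-k_0}(x)|$ that grows with $k_0$. This does not come from $b_T$: iterating $|P|\ge b_T|P''|$ gives $|P_{n-k_0}(x)|\le b_T^{-k_0}|P_n(x)|$, which is the wrong direction.

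The correct source is uniform expansion with bounded distortion. The map $\tilde T^{\,n-k_0}$ sends $P_{n-k_0}(x)$ onto $I$ and sends $P_n(x)$ onto a $k_0$-cylinder of length at most $\lambda^{-k_0}$. Hence
\[
|P_n(x)|\lesssim\lambda^{-k_0}|P_{n-k_0}(x)|,
\qquad\text{so}\qquad
|P_{n-k_0}(x)|\gtrsim\lambda^{k_0}b_T\,r .
\]
This exceeds $r/G'$ once $k_0$ is large. With that correction, and with $k_0$ fixed before $N$ is chosen as you indicate, your argument goes through.
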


\begin{proof} We begin by proving a claim on how cylinders scale.
\begin{claim*}
There exists $\Phi>1$ such that if $P\in \P_n$ and $P'\in \P_{n-1}$ with $P\subset P'$, then $\mu(P') \ge \Phi\mu(P)$.
\end{claim*}

\begin{proof}
There must be some $Q\in \P_n$ where $Q\subset P'$ and  $Q\neq P$.  By conformality,
\begin{align*}
\frac1{\mu(\tilde T^{n-1}Q)}& = \frac{\mu(\tilde T^{n-1}P')}{\mu(\tilde T^{n-1}Q)}= \frac{\int_{P'}\e^{-S_{n-1}\psi}~d\mu}{\int_{Q}\e^{-S_{n-1}\psi}~d\mu}  = \frac{\e^{-S_{n-1}\psi(x)}\mu(P')}{\e^{-S_{n-1}\psi(y)}\mu(Q)}
\end{align*}
for some $x\in P'$ and $y\in Q$.  By Lemma \ref{lem:psireg} we know that $|S_{n-1}\psi(x)-S_{n-1}\psi(y)|\le K.$ This implies
$$\mu(Q) \ge \e^{-K}\mu(\tilde T^{n-1}Q)\mu(P') \ge  \e^{-K}M\mu(P')$$
for $M:=\inf_{\hat Q\in \P_1} \mu(\hat Q)\in (0, 1)$, where we are using the fact that $\tilde T^{n-1}Q\in \P_1$.
So as 
$$\mu(P') \ge \mu(Q) +\mu(P) \ge \e^{-K}M\mu(P')+ \mu(P),$$
 setting $\Phi: = \frac1{1-\e^{-K}M}$ completes the claim.
\end{proof}

Let $x\in \textrm{supp}(\mu)$ and $r>0$. Suppose that $n$ is maximal such that $B(x, r)\cap X$ is contained in some $P\in \P_n$.  Here we also assume that $r$ is small enough that $n\ge 2$.  So $\mu(B(x, r))\le \mu(P)$.  By construction, 
$r\ge G'|P|$, since otherwise $B(x, r)\cap X$ is contained in some $(n+1)$-cylinder.  Then for $P'\in \P_{n-1}$ such that $P\subset P'$, defining $D:= 1/(b_TG')$ we have $Dr\ge |P'|$ so $B(x, Dr)$ contains $P'$.
Hence by the claim above
$$\mu(B(x, Dr)) \ge \mu(P')\ge \Phi\mu(P)\ge  \Phi\mu(B(x, r)).$$
Combining the above with Lemma~\ref{lem:backdouble} we see that $\mu$ satisfies the continuous mean scaling property.
\end{proof}

The following gives a very rough Frostman bound, which seems only likely to be optimal in special cases, but it is sufficient for our purposes here.  We first define $$\lambda_{\max}:= \sup_{x\in \tilde X}|D\tilde T(x)|\ge \lambda>1$$
and recall $\Phi$ in the proof of the lemma above.

\begin{lemma}
	\label{lem:Easy frostman bound}
Let $\Phi$ be as in the claim in Lemma \ref{lem:ctsscale}. The Frostman dimension of $\mu$ satisfies $F_{\mu}\ge \frac{\log\Phi}{\log\lambda_{\max}}$. 
\end{lemma}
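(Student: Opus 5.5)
The idea is to cover any ball by a cylinder $P\in\P_n$ with $n$ comparable to $\log(1/r)/\log\lambda_{\max}$, and then bound $\mu(P)$ by iterating the Claim in the proof of Lemma~\ref{lem:ctsscale}. Iterating that Claim along the nested chain $P=P_n\subset P_{n-1}\subset\cdots\subset P_1$ with $P_k\in\P_k$ gives
\[
\mu(P)\le \Phi^{-(n-1)}\mu(P_1)\le \Phi^{-(n-1)}
\]
for every $P\in\P_n$. It therefore suffices to show that a ball $B(x,r)$ meets $X$ inside a bounded number of $n$-cylinders, where $n$ is roughly $\log(1/r)/\log\lambda_{\max}$.

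\textbf{Step 1: lower bound on cylinder lengths and gaps.} Since $|D\tilde T|\le\lambda_{\max}$ and $\tilde T^{n}$ maps $P\in\P_n$ onto an element of $\P_0$ (that is, onto $I$, up to the first-level partition), bounded distortion gives $|P|\gtrsim\lambda_{\max}^{-n}$. Alternatively, I can use the inequality $|P|\ge b_T|P''|$ recalled before the thermodynamic discussion; iterating it gives $|P|\ge b_T^{\,n}$, but that exponent is worse. So the derivative bound is the right tool. By the mean value theorem on each branch together with bounded distortion, I expect $|P|\ge c\lambda_{\max}^{-n}$ for some $c>0$. Similarly, the gap between distinct $n$-cylinders inside the same $(n-1)$-cylinder $P''$ is at least $G'|P''|\ge G'c\lambda_{\max}^{-(n-1)}$.

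\textbf{Step 2: choose $n$ in terms of $r$.} Given small $r>0$, let $n$ be the largest integer with $G'c\lambda_{\max}^{-n}>2r$. Then any two points of $X\cap B(x,r)$ lie in the same $n$-cylinder. To see this, take the first level $k\le n$ at which they separate. At that level they lie in distinct $k$-cylinders inside a common $(k-1)$-cylinder, so their distance is at least $G'c\lambda_{\max}^{-(k-1)}\ge G'c\lambda_{\max}^{-n}>2r$, a contradiction. Hence $\mu(B(x,r))\le\mu(P)\le\Phi^{-(n-1)}$. Maximality of $n$ gives $\lambda_{\max}^{-n}\lesssim r/\lambda_{\max}$, i.e.\ $n\ge \log(1/r)/\log\lambda_{\max}-C_0$. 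Therefore
\[
\mu(B(x,r))\lesssim \Phi^{-\log(1/r)/\log\lambda_{\max}}=r^{\log\Phi/\log\lambda_{\max}},
\]
which is the bound $F_\mu\ge\log\Phi/\log\lambda_{\max}$. For balls not centred on $\mathrm{supp}(\mu)$, or for $r$ not small, the bound holds trivially after enlarging the constant.

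\textbf{Main obstacle.} The delicate point is the uniform lower bound $|P|\ge c\lambda_{\max}^{-n}$ in Step 1, since the gap estimate depends on it. I would derive it from $|\tilde T^nP|\ge\min_{Q\in\P_1}|Q|>0$ (using that $\tilde T^n P$ is a $1$-cylinder, as in the Claim) together with $|\tilde T^nP|\le\lambda_{\max}^n|P|$.
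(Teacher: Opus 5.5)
Your proof is correct and follows the paper's route: iterate the Claim to get $\mu(P)\lesssim\Phi^{-n}$ for $P\in\P_n$, and use the mean value theorem to get $|P|\ge\lambda_{\max}^{-n}$. For the latter no distortion is needed, since $\tilde T^nP=I$ (it is $\tilde T^{n-1}P$, not $\tilde T^nP$, that lies in $\P_1$). The only difference is that you spell out, via the gap constant $G'$, the passage from cylinders to arbitrary balls that the paper compresses into ``an approximation argument''.
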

\begin{proof}
It is sufficient to prove that $\mu(P) \le |P|^{\frac{\log\Phi}{\log\lambda_{\max}}}$ where $P$ is a cylinder, since a suitable bound then passes to any interval via an approximation argument. If $P\in \P_n$ then $\mu(P)\le \Phi^{-n}$ by repeatedly applying the claim in Lemma \ref{lem:ctsscale}, and $|P| \ge \lambda_{\max}^{-n}$ by an application of the mean value theorem and the chain rule.  Hence  $\mu(P) \le |P|^{\frac{\log\Phi}{\log\lambda_{\max}}}$.
\end{proof}

  Hence we can apply Theorem~\ref{mainthm:1orb} to these systems.  Note that by Lemma~\ref{lem:cookierecur},  the condition  $\beta(C_{\mu}-D_{\mu})<1$ is is implied by $\beta(C_{\mu}-F_{\mu})<1$.
  
  We have also shown that Theorem~\ref{mainthm:2orb} holds when our systems are both the same.  In the other case, we have not proved the existence of $C_{\mu_1, \mu_2}$, see \cite[Section 5]{ArrWat85} to see the types of issues two different measures can cause.  Nevertheless, if we have some estimates in these cases, we may apply Theorem~\ref{thm:2orbgen}. 
 
 \subsection{Cookie cutters: geometric potentials} 
 
 A particularly natural class of equilibrium state for a cookie cutter corresponds to the case when the potential is given by $\phi(x) = \phi_\delta= -\delta\log |D\tilde T(x)|$ for $\delta\in \R$ and $x\in \tilde X$.  Note that this map is always H\"older.  Bowen's formula, see \cite{Bow75} implies that there is a unique $h>0$ such that $P(\phi_{h})=0$ and moreover the Hausdorff dimension of $X$ equals $h$.  Setting $\mu = \mu_{\phi_{h}}$ we see that as in Section~\ref{ssec:cookiegeneral} $\mu$ is $\psi$-conformal where $\psi(x)=  -h\log |D\tilde T(x)|+\log \rho(x)-\log \rho\circ \tilde T(x)$ where $\rho$ is the H\"older density function for $\mu$.   So for example, if $T^n:A\to T^n(A)$ is bijective, then
$$\mu(T^n(A)) = \int_A|D\tilde T^n|^h\cdot \frac{\rho\circ \tilde T^n}{\rho}~d\mu.$$
 It can therefore be shown that in this case $\mu$ is $h$-Ahlfors regular.  Hence $C_{\mu}$ exists and equals $h$. Moreover, by $h$-Ahlfors regularity and Lemma \ref{lem:cookierecur} we must have $F_{\mu}=h$ and $D_{\mu}\geq h$. Consequently the only constraint in applying Theorems~\ref{mainthm:2orb} and \ref{mainthm:1orb} to this system is that $\beta\in(0,2/h)$. This is the optimal parameter space for $\beta$.
 
\subsection{Cookie cutters: an explicit example}
Here we will give a simple example  of a linear cookie cutter with a Bernoulli measure exhibiting multifractal behaviour, where we can compute our quantities explicitly.

Let $I_1= [0, 1/4]$ and $I_2= [3/4, 1]$ and $\tilde T(x) = 4x$ for $x\in I_1$ and $\tilde T(x) = 4x-3$ for $x\in I_2$.  Then $\tilde T$ is a linear cookie cutter.  Let $\mu$ be the corresponding $(\alpha, 1-\alpha)$-Bernoulli measure for $\alpha\in (0, 1/2]$.  We first note that by standard arguments, for example an analogue of \cite[Lemma 13]{GouRouSta24} or an adaptation of the conditioning argument in Section~\ref{ssec:conditioning}, we have $$\underline{C}_\mu= \liminf_{n\to \infty} \frac{\log \sum_{Q\in \mathcal{Q}_n} \mu(Q)^2}{-n\log 4},$$ where $\mathcal{Q}_n$ is the set of intervals of the form $[\frac{k}{4^{n}},\frac{k+1}{4^{n}}]$ for some $k\in \mathbb{N}$. We have a similar expression for $\overline{C}_\mu$ involving the  $\limsup$.  Using these expressions we can show that in this case $C_\mu$ exists and that it equals $-\log\left( \alpha^2+ (1-\alpha)^2\right)/\log 4$. For the Frostman exponent we can compute $F_\mu= -\log(1-\alpha)/\log 4$.

Suppose now that $\alpha=1/3$. Then $C_\mu= \frac{\log\left(\frac95\right)}{\log 4}$  and  $F_\mu=  \frac{\log\frac32}{\log 4}$. Then by Lemma~\ref{lem:cookierecur}, $C_\mu-D_\mu \leq C_\mu-F_\mu=  \frac{\log\frac{18}{15}}{\log 4}\approx 0.132.$ So the conditions $\beta(C_\mu-D_\mu)<1$ and $\beta(C_\mu-F_\mu)<1$ become $\beta<7.604...$, and $\beta C_\mu<2$ becomes $\beta<\frac {2\log 4}{\log \frac95}\approx 3.538$.  Hence the conclusion of Theorem~\ref{mainthm:1orb} holds in the optimal range of parameters $\beta\in (0,\frac{2}{C_{\mu}})$.  We note that Theorem~~\ref{mainthm:2orb} also applies for $\tilde T_1=\tilde T_2=\tilde T$ and $\mu_1=\mu_2=\mu$, and Theorem~\ref{thm:2orbgen} may also apply if $\mu_1$ and $\mu_2$ have different $\alpha$ corresponding to them.

\subsection{A counterexample}
\label{subsec:counterexample}
Consider the version of the Manneville-Pomeau map given by 
\[T(x) = \begin{cases} x(1+2^\alpha x^\alpha) & \text{ if } x\in [0, 1/2),\\
2x-1 &\text{ if } x\in [1/2, 1].
\end{cases}
\]
with parameter $\alpha\in (0,1)$. Then let $\mu$ be the associated acip.  It is shown in \cite[Proposition 3.2]{RouTod24} that  $C_\mu=2(1-\alpha)$ when $\alpha\in (1/2, 1)$: indeed there exists $K>0$ such that for all $r\in(0,1)$ we have
\begin{equation}
	\label{eq:MP correlation dimension}
	\frac{r^{2(1-\alpha)}}K\le \int\mu\left(B(z, r)\right)~d\mu(z)\le Kr^{2(1-\alpha)}.
	\end{equation}
The continuous mean scaling property can be easily checked here, and it can be shown that the Frostman dimension satisfies $F_\mu=1-\alpha$, which would not cause problems in terms of the statement of Theorem~\ref{mainthm:1orb} for $\beta\in (0, 2]$; however, as in \cite[Corollary 1]{Sar02}, the mixing rate here is polynomial, so the conditions for our main results do not hold.  We will consider the particular case of $\beta=1$ and show that the conclusion of Theorem \ref{mainthm:1orb} does not hold.

As in, for example,  
\cite[Corollary 1]{Sar02}, let $y_0=1$, and for $n\geq 1$ let $y_n\in (0, y_{n-1})$ be such that $T(y_n)= y_{n-1}$.  As proved there, $y_n\sim c/n^{1/\alpha}$.  Let $k_n:= \lfloor (n\log n)^{\alpha}\rfloor $ so that $y_{k_n}  \sim \frac c{n\log n}$.  Let $\gamma\in (0, \alpha)$ and observe that 
$$k_n- n^\alpha (\log n)^\gamma = k_n\left(1- \frac{n^\alpha (\log n)^\gamma}{\lfloor (n\log n)^{\alpha}\rfloor}\right)>\frac{k_{n}}{2}$$ for $n$ sufficiently large. This inequality and the fact $y_n\sim c/n^{1/\alpha}$ mean that for any $s>0,$ for $n$ sufficiently large, if $T^nx\in (0, y_{k_n})$ then $T^{n+i}x\in (0, y_{k_n-i})\in (0, s/2n)$ for all $0\le i\le n^\alpha (\log n)^\gamma$.  Therefore for all $s>0,$ for $n$ sufficiently large, if $T^nx\in (0, y_{k_n})$ then
 $$\#\left\{0\leq i\neq j\leq n+n^{\alpha}(\log n)^{\gamma}: |T^{i}x-T^{j}x|<\frac{s}{2n}\right\}\gtrsim n^{2\alpha} (\log n)^{2\gamma}$$
 Using this counting bound and \eqref{eq:MP correlation dimension}, we see that for any $s>0$, for $n$ sufficiently large, if $T^{n}x\in (0,y_{k_{n}})$ then
 \begin{align}
 &\frac{ \#\left\{0\le i\neq  j<2n:|T^ix- T^jx|\le \frac s{2n}\right\}}{(2n)^2\int\mu\left(B\left(z,\frac s{2n}\right)\right)~d\mu(z)} \nonumber \\
 &\hspace{2cm} \asymp \frac1{n^{2\alpha}} \#\left\{0\le i\neq  j<2n:|T^ix- T^jx|\le \frac s{2n}\right\} \gtrsim (\log n)^{2\gamma}.
 \label{eq:blowup}
 \end{align}

 Since the Lebesgue measure of $(0, y_{k_n})$ is at least a constant multiple of $\frac1{n\log n}$ for all $n\in \N$, hence not summable, by  \cite[Theorem 1.1]{Gou07} for $\mu$ almost every $x$, there are infinitely many $n$ such that $T^nx\in (0, y_{k_n})$. Hence for any $s>0$ there exists infinitely many $n$ so that \eqref{eq:blowup} is satisfied. Hence for $\beta=1,$ for $\mu$ almost every $x$ the conclusion of Theorem \ref{mainthm:1orb} does not hold for any $s>0$. 
 
\begin{remark}
A natural question raised by this counterexample is, for a system with some polynomial decay of correlations: for sufficiently small $\beta$ could one establish results as in Theorems~\ref{mainthm:2orb} and \ref{mainthm:1orb}?  
The methods to prove our main theorems suggest that we could make such an extension, though given the known examples, one would expect this problem to come with less suitable Banach spaces, which may then incur additional error terms due to the approximation of indicator functions on intervals by elements of those Banach spaces.  Moreover, one would have to find replacements for our exponential 4-mixing for intervals property and our early return property.
Therefore to extend our results much beyond the examples covered here, it is likely that a suite of new techniques would be required.
\end{remark}


\begin{thebibliography}{XXX}

\bibitem[AD]{AarDen01} J.\ Aaronson, M.\ Denker, \emph{Local limit theorems for partial sums of stationary sequences generated by Gibbs-Markov maps,} Stoch. Dyn. \textbf{1} (2001) 193--237. 

\bibitem[AB]{AisBak} C.\ Aistleitner, S.\ Baker, \emph{On the pair correlations of powers of real numbers}, Israel J. Math. {\bf 242} (2021) no.~1, 243--268.

\bibitem[ALP]{AisLacPau18} C.\ Aistleitner, T.\ Lachmann, F.\ Pausinger, \emph{Pair correlations and equidistribution,} J. Number Theory {\bf 182} (2018) 206--220.


\bibitem[ALL]{AisLarLew17}  C.\ Aistleitner, G.\ Larcher, M.\ Lewko, 
\emph{Additive energy and the Hausdorff dimension of the exceptional set in metric pair correlation problems,}
 Israel J.\ Math.\ \textbf{222} (2017) 463--485, With an appendix by Jean Bourgain. 
 
\bibitem[ALT]{AisLarTec19}  C.\ Aistleitner, G.\ Larcher, N.\ Technau, \emph{There is no Khintchine threshold for metric pair correlations,} Mathematika {\bf 65} (2019)  929--949.

 \bibitem[ABB]{AllBakBar25} D.\ Allen, S.\ Baker, B.\ B\'{a}r\'{a}ny, \emph{Recurrence rates for shifts of finite type,} Adv. Math. \textbf{460} (2025), Paper No. 110039, 36 pp.
 
 \bibitem[ABM]{AlvBecMer23} N.\ Alvarez, V.\ Becher, M.\ Mereb, \emph{Poisson generic sequences,} Int. Math. Res. Not. IMRN (2023) 20970--20987.

 \bibitem[AW]{ArrWat85} R.\ Arratia, M.S.\ Waterman, \emph{Critical phenomena in sequence matching,} Ann. Probab.
\textbf{13} (1985) 1236–1249.

\bibitem[BF]{BakFar21} S.\ Baker, M.\ Farmer, \emph{Quantitative recurrence properties for self-conformal sets,} Proc. Amer. Math. Soc. {\bf 149} (2021) 1127--1138.

\bibitem[BK]{BakKoi24} S.\ Baker, H. Koivusalo, \emph{Quantitative recurrence and the shrinking target problem for overlapping iterated function systems,} Adv. Math. {\bf 442} (2024), Paper No. 109538, 65 pp.

\bibitem[BLR]{BarLiaRou19} V.\ Barros, L.\ Liao, J.\ Rousseau,
  \emph{On the shortest distance between orbits and the longest
    common substring problem,} Adv. Math. \textbf{344} (2019)
  311--339.
  
  \bibitem[BCGW]{BloChoGalWal18} T.F.\ Bloom, S.\ Chow, A.\ Gafni, A.\  Walker, 
\emph{Additive energy and the metric Poissonian property,}
Mathematika \textbf{64} (2018) 679--700.

\bibitem[B]{Bos93} M.D.\ Boshernitzan, \emph{Quantitative recurrence results,} Invent. Math. \textbf{113} (1993)  617--631.

\bibitem[Bow]{Bow75} R.\ Bowen, \emph{Equilibrium States and the Ergodic Theory of Anosov Diffeomorphisms,} Springer Lect. Notes in Math. 470 (1975).

\bibitem[BDT]{BruDemTod18} H.\ Bruin, M.\ Demers, M.\ Todd, 
\emph{Hitting and escaping statistics: mixing, targets and holes, } Adv. Math. \textbf{328} (2018) 1263--1298.


\bibitem[B]{Bugeaudbook} Y.\ Bugeaud, \emph{ Distribution modulo one and Diophantine approximation}, Cambridge Tracts in Mathematics, 193, Cambridge Univ. Press, Cambridge, 2012.

\bibitem[EBMV]{BazMarVin} D. El-Baz, J. Marklof, I. Vinogradov, \textit{The two-point correlation function of the fractional parts of $\sqrt{n}$ is Poisson,} Proc. Amer. Math. Soc. {\bf 143} (2015), no.~7, 2815--2828.

\bibitem[EM]{ElkMcM04} N.\ Elkies, C.\ McMullen, \emph{Gaps in ${\sqrt n}\bmod 1$ and ergodic theory,} Duke Math. J. {\bf 123} (2004) 95--139.   
  
\bibitem[FFT1]{FreFreTod10} A.C.M.\ Freitas, J.M.\ Freitas, M.\ Todd, \emph{Hitting time statistics and
extreme value theory,} Probab. Theory Related Fields \textbf{147} (2010) 675--710.  

\bibitem[FFT2]{FreFreTod25} A.C.M.\ Freitas, J.M.\ Freitas, M.\ Todd, \emph{ Enriched functional limit
theorems for dynamical systems,}  Online at Ann. Sc. Norm. Super. Pisa Cl. Sci. (5).
  
\bibitem[G]{Gou07} S.\ Gou\"ezel, \emph{A Borel-Cantelli lemma for intermittent interval maps,}
 Nonlinearity \textbf{20} (2007) 1491--1497. 
 
 \bibitem[GRS]{GouRouSta24} S.\ Gou\"ezel, J.\ Rousseau, M.\ Stadlbauer, \emph{Minimal distance between random orbits,} Probab. Theory Related Fields \textbf{189} (2024) 811--847.
 
 \bibitem[GL]{GreLar17} S.\ Grepstad, G.\ Larcher, \emph{On pair correlation and discrepancy,} Arch. Math. (Basel) {\bf 109} (2017), no.~2, 143--149.
   
 
  \bibitem[HKKP]{HolKirKunPer24} M.\ Holland, M.\ Kirsebom, P.\
  Kunde, T.\ Persson, \emph{Dichotomy results for eventually
    always hitting time statistics and almost sure growth of
    extremes,} Trans. Amer. Math. Soc. \textbf{377} (2024) 3927--3982.

  
  \bibitem[HNT]{HolNicTor12} M. Holland, M. Nicol, A. Török, \emph{Extreme value
    theory for non-uniformly expanding dynamical systems},
  Trans. Amer. Math. Soc. \textbf{364} (2012) 661--688.

\bibitem[HT]{HolTod25} M.  Holland, M.\ Todd, \emph{On distributional limit laws for recurrence,}
 Nonlinearity \textbf{38} (2025) Paper No. 075028.

\bibitem[KMS]{KesMunStr12} M.\ Kesseb\"ohmer, S.\ Munday,  B.O.\ Stratmann, 
\emph{Strong renewal theorems and {L}yapunov spectra for $\alpha$-Farey and $\alpha$-L\"uroth systems,} Ergodic Theory Dynam. Systems \textbf{32} (2012) 989--1017.

\bibitem[KKP]{KirKunPer23} M.\ Kirsebom, P.\ Kunde, T.\ Persson,
  \emph{On shrinking targets and self-returning points, }
  Ann. Sc. Norm. Super. Pisa Cl. Sci. (5) \textbf{24} (2023)
  1499--1535.

\bibitem[KKPT]{KirKunPerTod25} M.\ Kirsebom, P.\ Kunde, T.\ Persson, M.\ Todd,
\emph{Almost sure orbits closeness}, Preprint (arXiv:2510.13277).


\bibitem[KP]{KuiNie} L.\ Kuipers, H.\ Niederreiter, \emph{Uniform distribution of sequences}, Pure and Applied Mathematics, Wiley-Intersci., New York-London-Sydney, 1974.

\bibitem[LLSV]{LeLiSiVe25} J.\ Levesley, B.\ Li, D.\ Simmons, S. Velani, \emph{Shrinking targets versus recurrence: the quantitative theory}, Mathematika {\bf 71} (2025) Paper No. e70039, 16 pp.

\bibitem[LST]{LutSouTec25} C.\ Lutsko, A.\ Sourmelidis, N.\ Technau, \emph{Pair correlation of the fractional parts of $\alpha n^{\theta}$,} J. Eur. Math. Soc. (JEMS) {\bf 27} (2025) 4069--4082.

\bibitem[M]{Mar20} J. Marklof, \emph{Pair correlation and equidistribution on manifolds,} Monatsh. Math. {\bf 191} (2020) 279--294.

\bibitem[MN]{MelNic08} I.\ Melbourne,  M.\ Nicol, \emph{Large deviations for nonuniformly hyperbolic systems, }Trans. Amer. Math. Soc. \textbf{360} (2008) 6661--6676. 

\bibitem[MZ]{MelZwe15} I.\ Melbourne, R.\ Zweim\"uller, \emph{Weak convergence to stable {L}\'evy processes for nonuniformly hyperbolic dynamical systems,} Ann. Inst. Henri Poincar\'e Probab. Stat. \textbf{51} (2015) 545--556.

\bibitem[NP]{NaiPol07} R.\ Nair, M.\ Pollicott, \emph{Pair correlations of sequences in higher dimensions,}
 Israel J. Math. \textbf{157} (2007) 219--238.
 

\bibitem[PW]{PesWei97} Y.\ Pesin, H.\ Weiss, \emph{A multifractal analysis of equilibrium measures for conformal expanding maps and {M}oran-like geometric constructions,}
 J. Statist. Phys. \textbf{86} (1997) 233--275.
 
\bibitem[PU]{PrzUrb10} F.\ Przytycki, M.\ Urba\'nski, Conformal fractals: ergodic theory methods, London Mathematical Society Lecture Note Series, vol. 371, Cambridge University Press, Cambridge, 2010.

\bibitem[Ra]{Ran89} D.A.\ Rand, \emph{The singularity spectrum $f(\alpha)$ for cookie-cutters,} Ergodic Theory Dynam. Systems \textbf{9} (1989) 527--541.

\bibitem[Re]{Reg23} S.\ Regavim, \emph{Minimal gaps and additive energy in real-valued sequences,}
 Q. J. Math. \textbf{74} (2023)  825--866.

\bibitem[RSW]{ReScWa05}  G.\ Reinert, S.\ Schbath, M.\ Waterman, \emph{Applied Combinatorics on Words}, volume 105 of Encyclopedia of Mathematics and its Applications, chapter Statistics on Words with Applications to Biological
Sequences, Cambridge University Press, 2005.

\bibitem[RY]{ReyYou08} L.\ Rey-Bellet,  L.-S.\ Young, \emph{Large deviations in non-uniformly hyperbolic dynamical systems,} Ergodic Theory Dynam. Systems \textbf{28} (2008) 587--612.

\bibitem[RT]{RouTod24} J.\ Rousseau, M.\ Todd, 
\emph{Orbits closeness for slowly mixing dynamical systems,}
 Ergodic Theory Dynam. Systems \textbf{44} (2024) 1192--1208.

 \bibitem[Ru]{Rud18} Z.\ Rudnick, \emph{A metric theory of minimal gaps, }
 Mathematika \textbf{64} (2018) 628--636.

\bibitem[RS]{RudSar} Z.\ Rudnick, P.\ Sarnak, \emph{The pair correlation function of fractional parts of polynomials,} Comm. Math. Phys. {\bf 194} (1998) 61--70.

\bibitem[RZ1]{RudZah99} Z.\ Rudnick, A.\ Zaharescu, \emph{A metric result on the pair correlation of fractional parts of sequences,}
Acta Arith. \textbf{89} (1999) 283--293. 



\bibitem[RZ2]{RudZah02} Z.\ Rudnick, A.\ Zaharescu, 
\emph{The distribution of spacings between fractional parts of
lacunary sequences,} Forum Math. \textbf{14} (2002) 691--712.

\bibitem[Ry]{Ryc83} M.\ Rychlik, \emph{Bounded variation and invariant measures,}
 Studia Math. \textbf{76} (1983) 69--80.
 
 
\bibitem[S]{Sar02} O.\ Sarig, \emph{Subexponential decay of correlations,} Invent. Math. \textbf{150}
 (2002) 629--653.

\bibitem[Sau]{Sau09} B.\ Saussol, \emph{An introduction to quantitative Poincar\'{e} recurrence in dynamical systems,} Rev. Math. Phys. \textbf{21} (2009) 949--979.


\bibitem[W]{Wal} A.\ Walker, \emph{The primes are not metric Poissonian}, Mathematika {\bf 64} (2018) 230--236.

\bibitem[Wal]{Waltersbook} P.\ Walters, \emph{An introduction to ergodic theory}, Graduate Texts in Mathematics, 79, Springer, New York-Berlin, 1982 
 
\bibitem[Wat]{Wat95} M.\ Waterman, \emph{Introduction to Computational Biology: Maps, Sequences and Genomes}, Chapman and Hall, London, 1995

\bibitem[Wei]{Wei20} B.\ Weiss, \emph{ Poisson generic points,} 23-27 November 2020. Jean-Morlet Chair conference on Diophantine Problems, Determinism and Randomness. Centre International
de Rencontres Math\'ematiques. Audio-visual resource: doi:10.24350/CIRM.V.19690103.

\bibitem[Y]{You98} L.-S.\ Young, \emph{Statistical properties of dynamical systems with some hyperbolicity,} Ann. of Math. (2) \textbf{147} (1998) 585--650.

\bibitem[Z]{Zha24} B.\ Zhao,
\emph{ Closest distance between iterates of typical points,}
Discrete Contin. Dyn. Syst. \textbf{44} (2024) 2252--2279.

 \end{thebibliography}
 \end{document}